\newtheorem{theorem}{Theorem}[section]
\newtheorem{defi}[theorem]{Definition}
\newtheorem{prop}[theorem]{Proposition}
\newtheorem{lemma}[theorem]{Lemma}
\newtheorem{cor}[theorem]{Corollary}
\newtheorem{remark}[theorem]{Remark}
\newtheorem{example}[theorem]{Example}
\newcommand*{\clos}{\overline}
\newcommand*{\R}{\mathbb{R}}
\newcommand*{\N}{\mathbb{N}}
\newcommand{\cP}{\mathcal{P}}
\newcommand*{\cX}{\mathcal{X}}
\newcommand*{\cM}{\mathcal{M}}
\newcommand*{\cL}{\mathcal{L}}
\newcommand*{\Omb}{\clos{\Omega}}
\renewcommand{\epsilon}{\varepsilon}
\DeclareMathOperator*{\argmin}{arg\,min}
\DeclareMathOperator{\Shrink}{Shrink}
\DeclareMathOperator{\spn}{span}
\DeclareMathOperator{\Img}{Im}
\newcommand{\id}{\mathrm{id}}
\renewcommand{\div}{\mathrm{div}}
\renewcommand{\d}{\mathrm{d}}
\newcommand{\spt}{\mathrm{spt}}
\newcommand{\loc}{\mathrm{loc}}
\newcommand{\peps}{p_\epsilon}
\newcommand{\Jeps}{J_\epsilon}
\newcommand{\ueps}{u^\epsilon}
\newcommand{\neps}{\nu^\epsilon}
\newcommand{\sigeps}{\sigma^\epsilon}
\newcommand{\bnu} {\boldsymbol{\nu}}
\newcommand{\bl} {\boldsymbol{\lambda}}
\newcommand{\bx} {\boldsymbol{x}}
\newcommand{\by} {\boldsymbol{y}}
\newcommand{\be} {\boldsymbol{e}}
\newcommand{\bdel} {\boldsymbol{\delta}}
\newcommand{\LL} {\mathcal{L}}
\newcommand{\tnu}{\widetilde{\nu}}
\newcommand{\btnu}{\widetilde{\bnu}}
\newcommand{\eps}{\varepsilon}
\DeclareMathOperator{\Mop}{\mathsf{M}}
\newcommand{\Ml}{\Mop_{\boldsymbol{\lambda}}}
\DeclareMathOperator{\Med}{\mathsf{Med}}
\newcommand{\Medln}{\Med_{\boldsymbol{\lambda}^n}}
\newcommand{\Medl}{\Med_{\boldsymbol{\lambda}}}
\DeclareMathOperator{\HMed}{\mathsf{HMed}}
\newcommand{\HMedl}{\HMed_{\boldsymbol{\lambda}}}
\DeclareMathOperator{\VMed}{\mathsf{VMed}}
\newcommand{\VMedl}{\VMed_{\boldsymbol{\lambda}}}
\DeclareMathOperator{\Lip}{Lip}
\title{Wasserstein medians: robustness, PDE characterization and numerics}
\author{G. Carlier\thanks{CEREMADE, Université Paris Dauphine, PSL, and INRIA-Paris, MOKAPLAN, \newline email: \texttt{carlier@ceremade.dauphine.fr}} \and E. Chenchene\thanks{Department of Mathematics and Scientific Computing, University of Graz, Graz, Austria. \newline email: \texttt{enis.chenchene@uni-graz.at}} \and K. Eichinger\thanks{CMAP, École Polytechnique \newline email: \texttt{katharina.eichinger@polytechnique.edu}}}
\begin{document}

\maketitle
\begin{abstract}
We investigate the notion of Wasserstein median as an alternative to the Wasserstein barycenter, which has become popular but may be sensitive to outliers. In terms of robustness to corrupted data, we indeed show that Wasserstein medians have a breakdown point of  approximately $\frac{1}{2}$.  We give explicit constructions of Wasserstein medians in dimension one which enable us to obtain $L^p$ estimates (which do not hold in higher dimensions).  We also address dual and multimarginal reformulations. In convex subsets of $\R^d$, we  connect Wasserstein medians to a minimal (multi)  flow problem \`a la Beckmann and  a system of PDEs of Monge--Kantorovich-type, for which we propose a $p$-Laplacian approximation.  Our analysis eventually leads to a new numerical method to compute Wasserstein medians, which is based on a Douglas--Rachford scheme applied to the minimal flow formulation of the problem.  

\end{abstract}

\smallskip
\textbf{Keywords:} Wasserstein medians, optimal transport, duality, Beckmann's problem, $p$-Laplace system approximation, Douglas--Rachford splitting method.

\section{Introduction}

	The notions of mean and median are well-known to be of variational nature. For instance, the arithmetic mean of a sample composed by $N$ points in $\mathbb{R}^d$ is the minimizer of the sum of the \emph{squared} Euclidean distances  to the sample points. Minimizing a weighted sum of distances to the sample points, one gets a notion of weighted medians, which in the literature is commonly referred to as  Torricelli--Fermat--Weber points or geometric medians. As  pointed out by Maurice Fréchet in his seminal work  \cite{Frechet1948}, these definitions can be generalized to any metric space $\left(\cX,d\right)$, yielding the notion of Fréchet mean and Fréchet median (or in general typical element). 

    The concept of Wasserstein barycenter, which corresponds to Fr\'echet means over the Wasserstein space of probability measures with finite second moments and equipped with the quadratic Wasserstein distance, was introduced and extensively studied in \cite{agueh_barycenters_2011}. Since then, research on Wasserstein barycenters has expanded in various directions. For instance, investigations have been conducted on Riemannian manifolds \cite{KimPass}, population barycenters involving possibly infinitely many measures \cite{LeGouicLoubes}, and Radon spaces \cite{kroshnin_frechet_2018}. The concept has gained popularity as a valuable tool for meaningful geometric interpolation between probability measures, finding applications in diverse fields such as image synthesis \cite{RabinPeyreDelon}, template estimation \cite{boissard_distributions_2015}, bayesian learning \cite{Fontbona}, and statistics \cite{Zemel}. Despite the inherent complexity of computing Wasserstein barycenters \cite{Altschuler}, numerical methods based on entropic regularization and the Sinkhorn algorithm have demonstrated their efficiency in calculating these interpolations \cite{cuturi_fast_2014, peyrecuturi, bccmnp15}.

    \begin{figure}[!t]
	    \centering
	    \includegraphics[width=\linewidth]{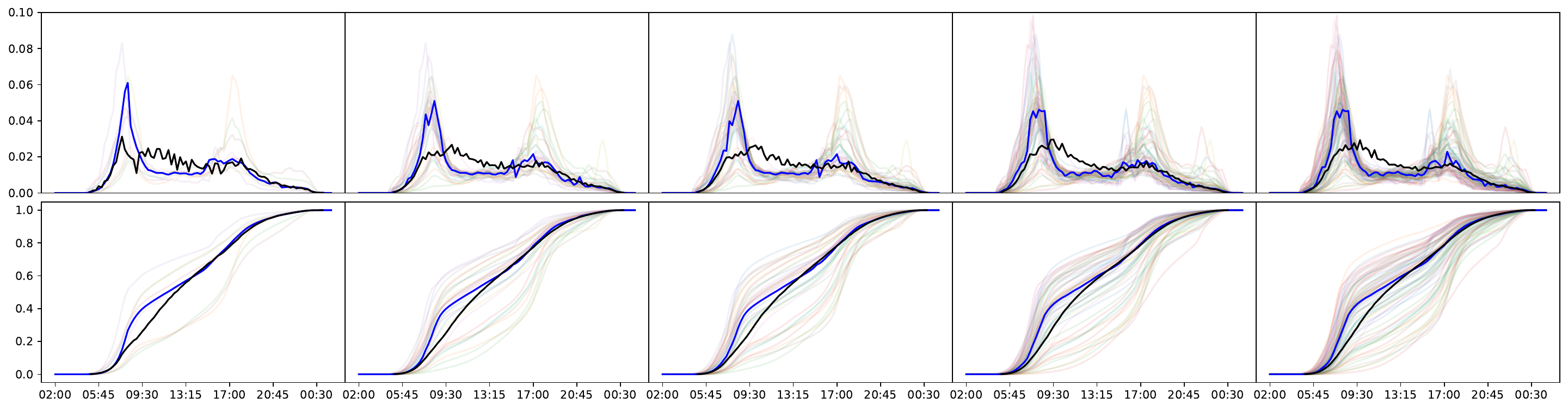}
	    \caption{Superposition of a Wasserstein median (blue), a Wasserstein barycenter (black) and the corresponding sample of $N = 9, \ 29, \ 39, \ 59,\ 81$ one-dimensional histograms. Each histogram represents the daily attendance frequency of some London underground stations\protect\footnotemark. Second row:~the corresponding cumulative distribution functions.}
	    \label{fig:one_dimensional_wm}
	\end{figure}

Following Fr\'echet's metric viewpoint, medians in Wasserstein spaces can be constructed as follows.
Given $p\geq 1$, positive weights $\lambda_1, \dots \lambda_N$ and  probability measures $\nu_1, \dots, \nu_N$ with finite $p$-moments, over a metric space $\cX$, the corresponding medians  are obtained by minimizing $\sum_{i=1}^N \lambda_i W_p(\nu_i,\cdot)$ where $W_p$ denotes the Wasserstein distance of order $p$. In the following, we will restrict ourselves to the case $p=1$. Indeed, even though the case $p>1$ might be natural it leads to delicate non-convex problems (see \cite{altschuler2021}) which are beyond the scope of the paper. On the contrary, the case $p=1$ is a special instance of the matching for teams problem \cite{carlier_matching_2010} and thus admits a linear programming formulation, which makes a clear connection to the Torricelli--Fermat--Weber points on the ambient space (see Section \ref{sec:dual_multimarginal}). Therefore in the present paper we will  investigate in details minimizers of  $\sum_{i=1}^N \lambda_i W_1(\nu_i,\cdot)$, which from now on we call \emph{Wasserstein medians}.  Our primary motivation for studying these objects comes from the following question: does the well-known \textit{robustness} of geometric medians extend to Wasserstein medians? Consider for instance the problem of averaging the daily attendance frequency of some London underground stations\footnotetext{Tfl open data \url{https://tfl.gov.uk/info-for/open-data-users}, accessed on June 29, 2023.} as in Figure \ref{fig:one_dimensional_wm} or the five pictures on the left of Figure \ref{fig:robustness}. It is pretty clear in these examples that Wasserstein medians show some sort of robustness, and that in general they should behave quite differently from the barycenter. 
	
	Our objective is to further explore the notion of Wasserstein median with a first focus on stability and robustness. We also investigate in depth the one-dimensional case where special constructions (which we call vertical and horizontal selections) select medians which inherit properties of the sample measures, in particular, we  show  that if all the sample measures $\nu_i$ are absolutely continuous with  densities bounded by some $M_i$, then there exists a Wasserstein median with a density bounded by $\max_i M_i$, which, as we will show later (Example \ref{ex:counterex-reg}),  cannot be true in higher dimensions. For more general situations,  we  present some  general tools to study Wasserstein medians, such as multi-marginal and dual formulations for the initial convex minimization problem. To the best of our knowldege,  Wasserstein medians have not been very much investigated even in the Euclidean setting with more than two sample measures, however related optimal matching problems (with two sample measures and additional constraints) have been studied in \cite{MazonRossiToledo}, \cite{igbida} and \cite{buttazzo2022wasserstein}. In the Euclidean setting in several dimensions, we  also characterize medians by a minimal flow problem \`a la Beckmann \cite{Beckmann} and a system of PDEs of Monge--Kantorovich type. This analysis leads to a new numerical method to compute Wasserstein medians, which is based on a Douglas--Rachford scheme applied to the minimal flow formulation of the problem. 

	\begin{figure}[!t]
    \centering
	\begin{subfigure}{0.2\textwidth}
	\centering
	\includegraphics[width=1\textwidth]{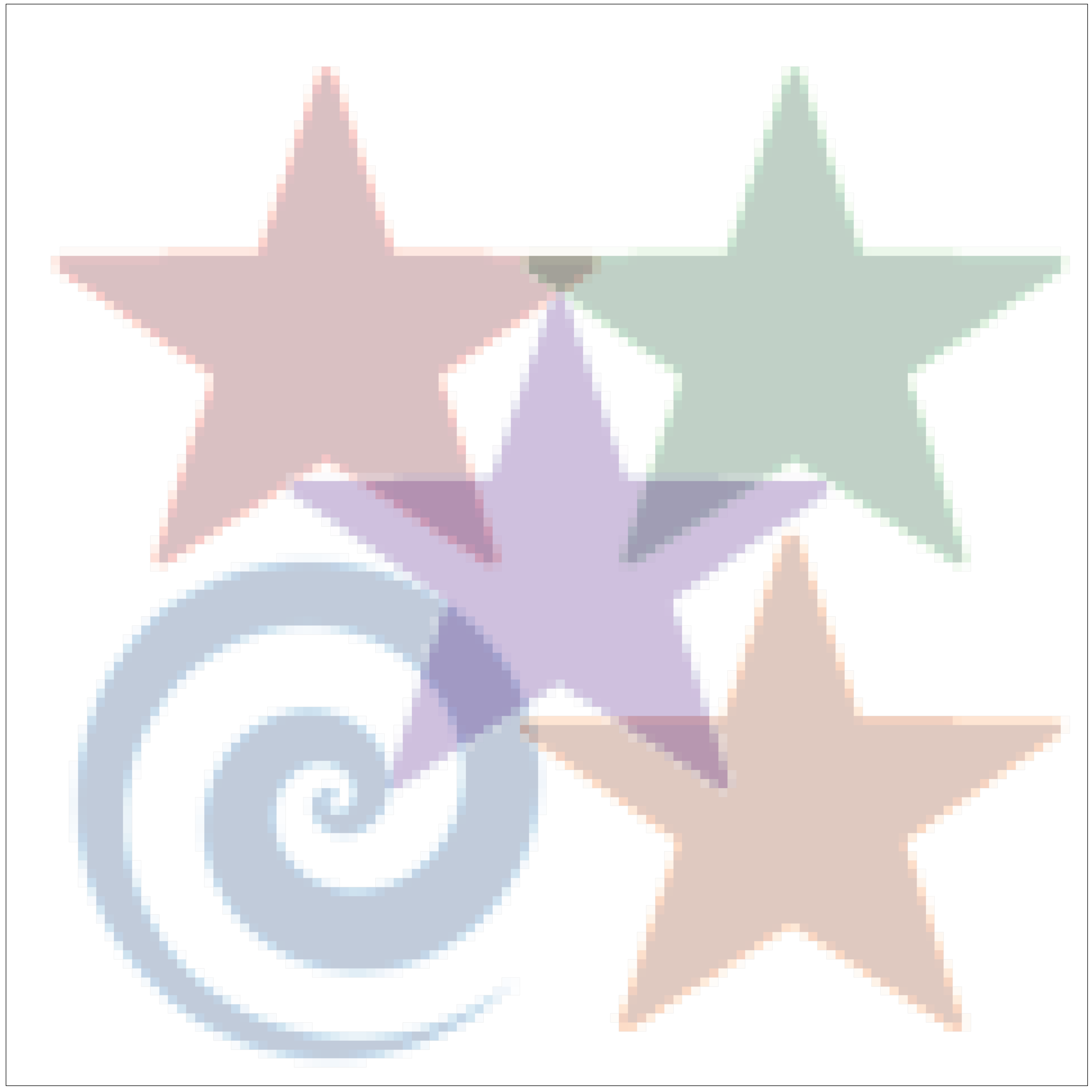}
	\caption{Sample measures.}
	\end{subfigure}
	\begin{subfigure}{0.2\textwidth}
		\centering
		\includegraphics[width=1\linewidth]{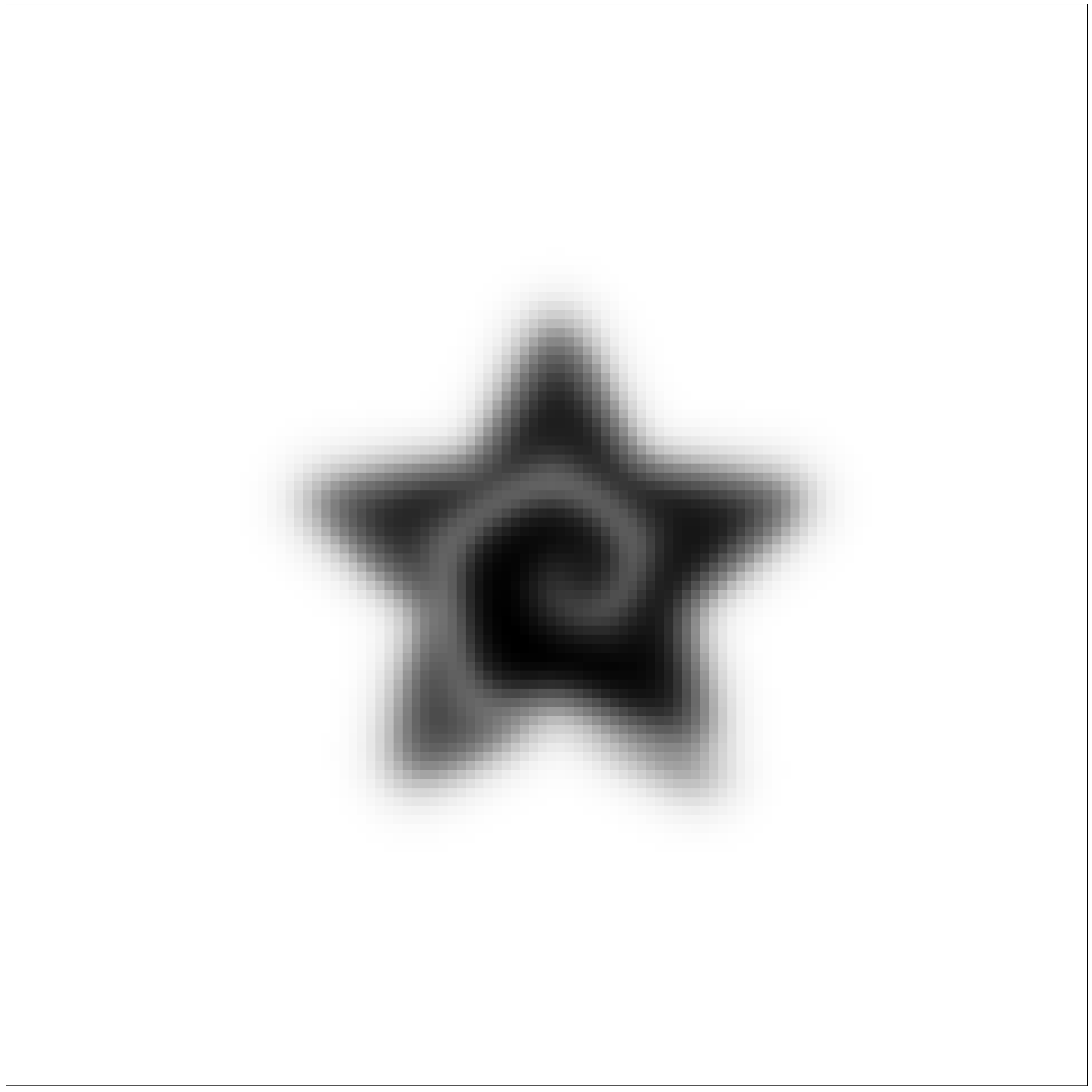}
	\caption{Barycenter.}
	\end{subfigure}
	\begin{subfigure}{0.2\textwidth}
		\centering
		\includegraphics[width=1\linewidth]{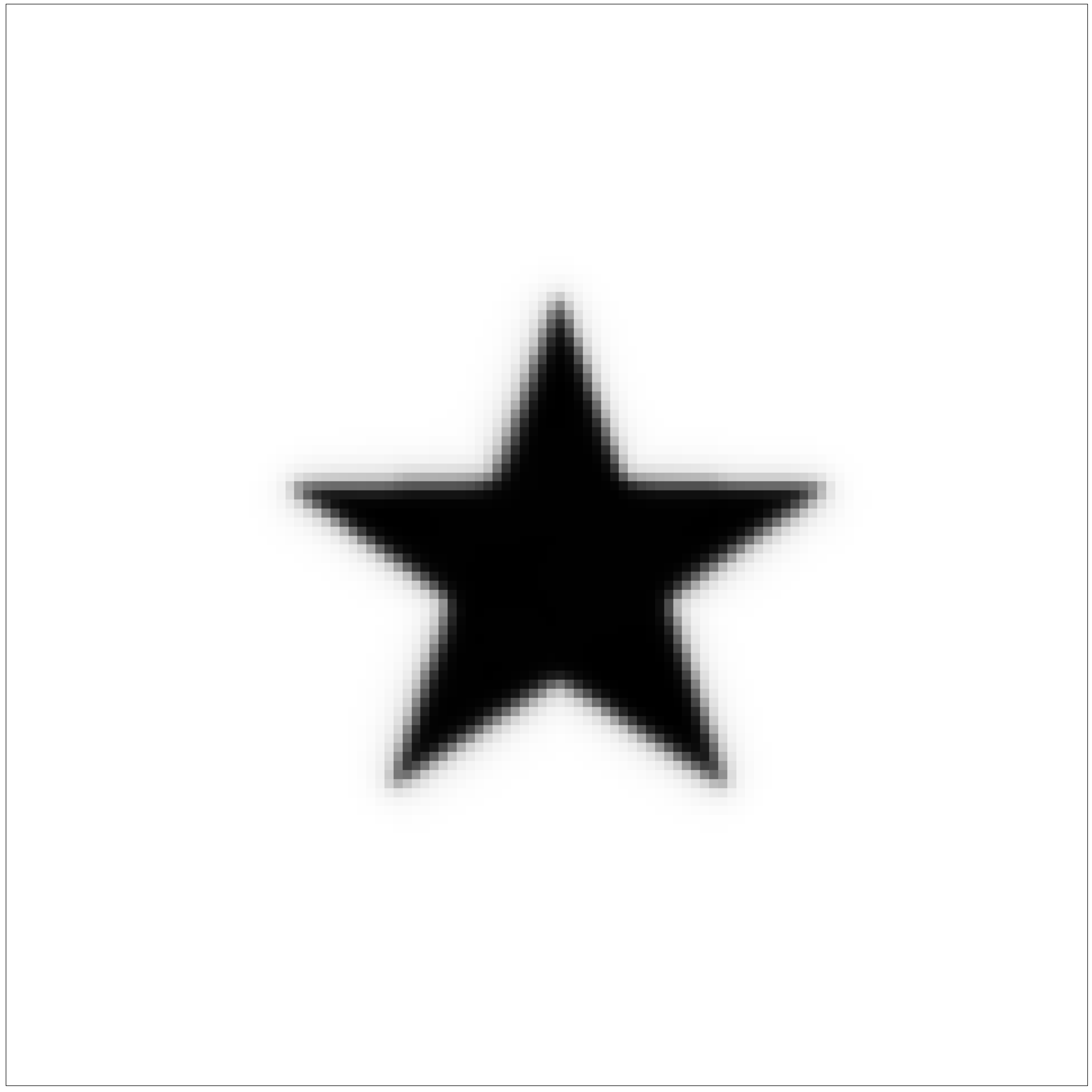}
	\caption{Median.}
	\end{subfigure}
	\caption{Comparison between a Wasserstein barycenter and a Wasserstein median for a sample of five measures computed with Sinkhorn (cf., Section \ref{sec:numerics}) in $1000$ iterations.}
	\label{fig:robustness}
    \end{figure}
	
	The  paper is organized as follows: in Section \ref{sec:formulation_existence}, we introduce the problem, show existence of Wasserstein medians and consider some basic examples. In Section \ref{sec:stability_robustness}, we discuss the stability of the notion subject to perturbations of the sample measures and prove that the \emph{break-down point} of the Wasserstein median problem with uniform weights is at least $1/2$, i.e.~to \emph{drastically corrupt} the estimation of the Wasserstein median one has to modify at least half of the sample measures. In Section \ref{sec:one_dim}, we focus on the one-dimensional case and emphasize the properties of medians which we call vertical and horizontal median selections. In Section \ref{sec:dual_multimarginal}, we present dual and multi-marginal formulations of the problem. In Section \ref{sec:beckmann}, we use a minimal flow formulation of the Wasserstein median problem to derive a system of Monge--Kantorovich type PDEs that characterizes medians. We also describe an approximation by a system of $p$-Laplace equations. We conclude in Section \ref{sec:numerics} with a brief description of the numerical methods we implemented to obtain the various figures in this paper and present a new one based on a Douglas--Rachford scheme on the flow formulation.

	\begin{figure}[!t]
	
    \centering
	\begin{subfigure}{0.45\textwidth}
	\centering
	\includegraphics[width=1\textwidth]{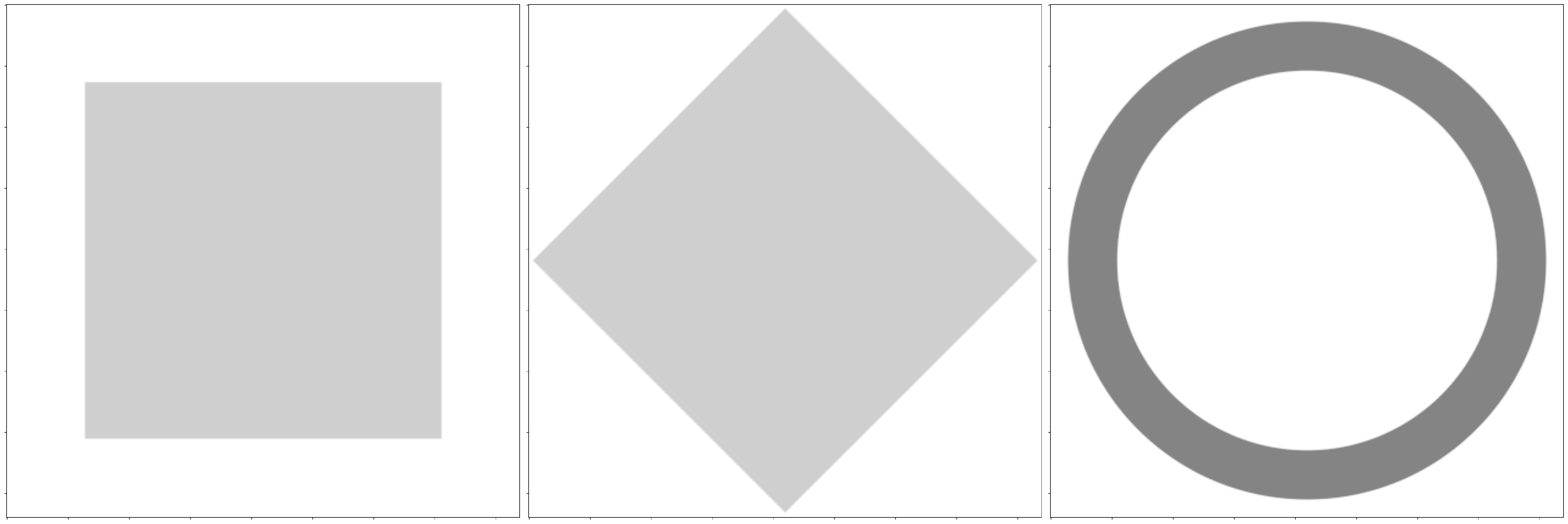}
	\end{subfigure}
	\hspace{1cm}
	\begin{subfigure}{0.25\textwidth}
	\centering
	\includegraphics[width=0.7\textwidth]{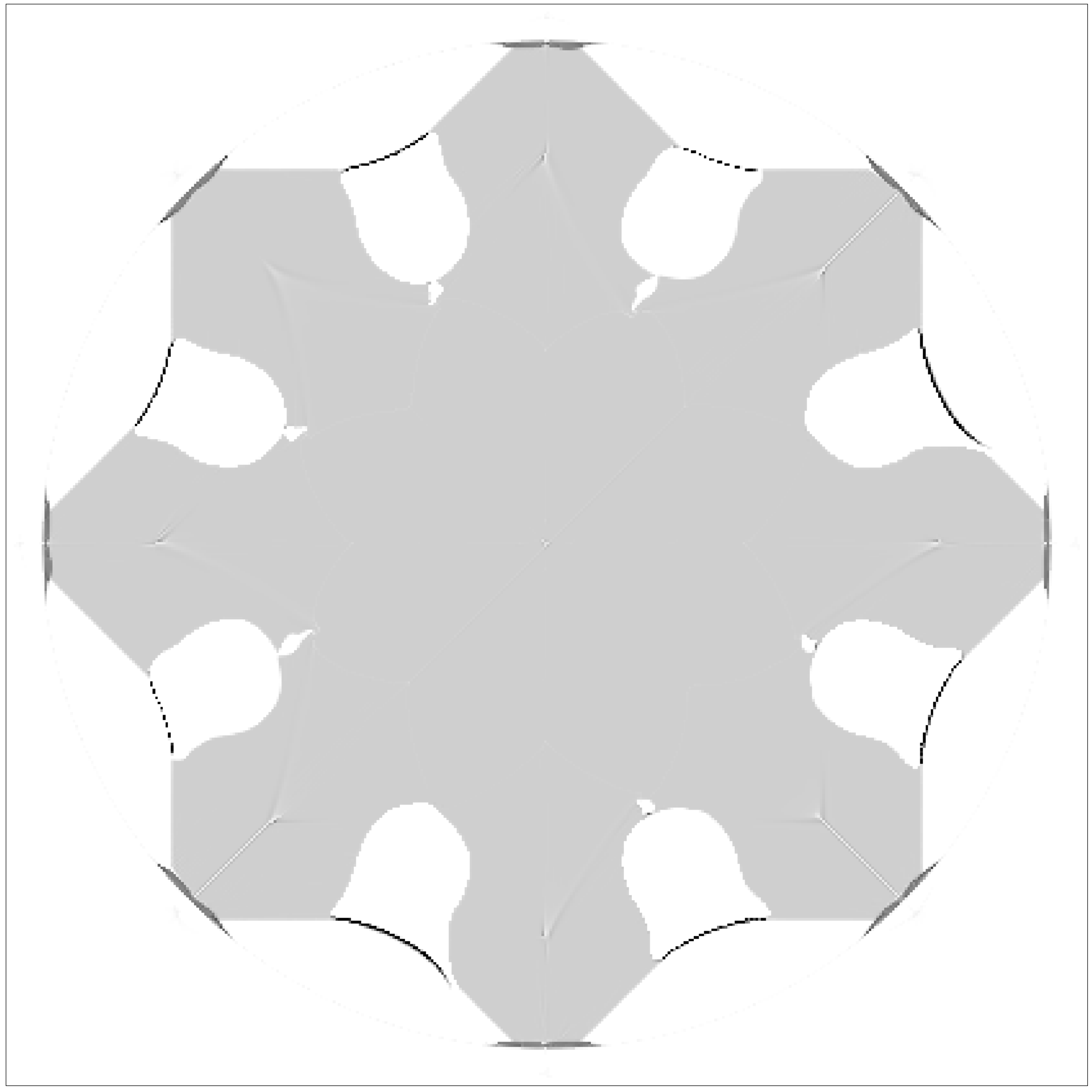}
	\end{subfigure}

    \centering
	\begin{subfigure}{0.45\textwidth}
	\centering
	\includegraphics[width=1\textwidth]{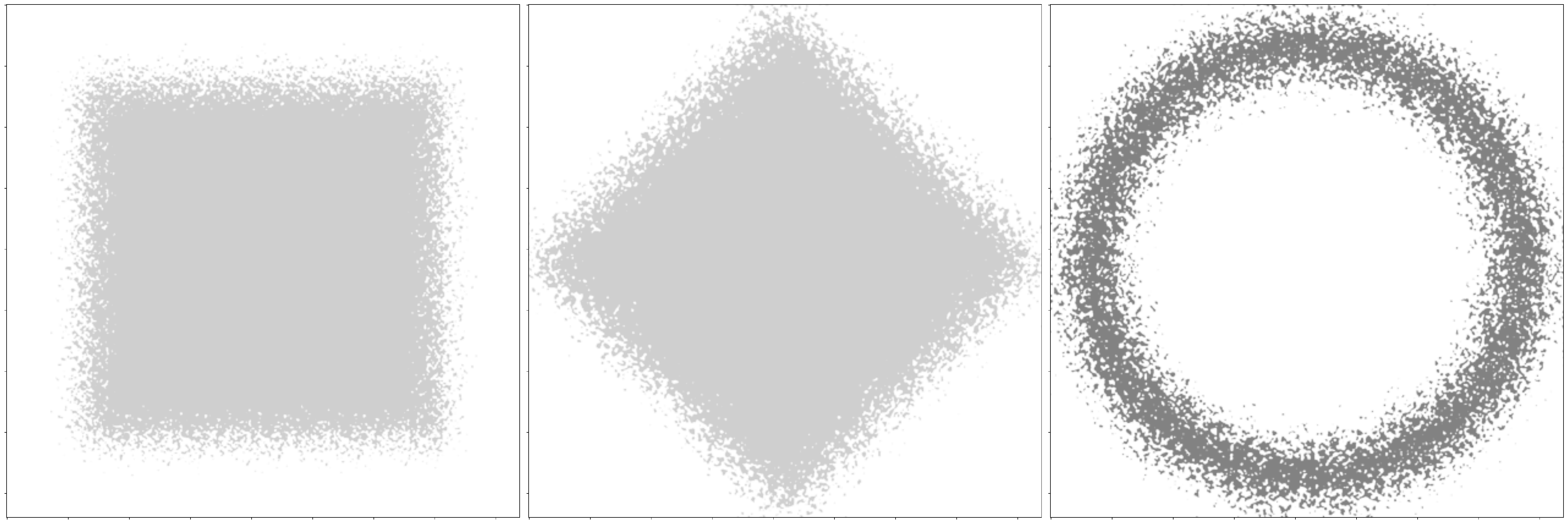}
	\end{subfigure}
	\hspace{1cm}
	\begin{subfigure}{0.25\textwidth}
	\centering
	\includegraphics[width=0.7\textwidth]{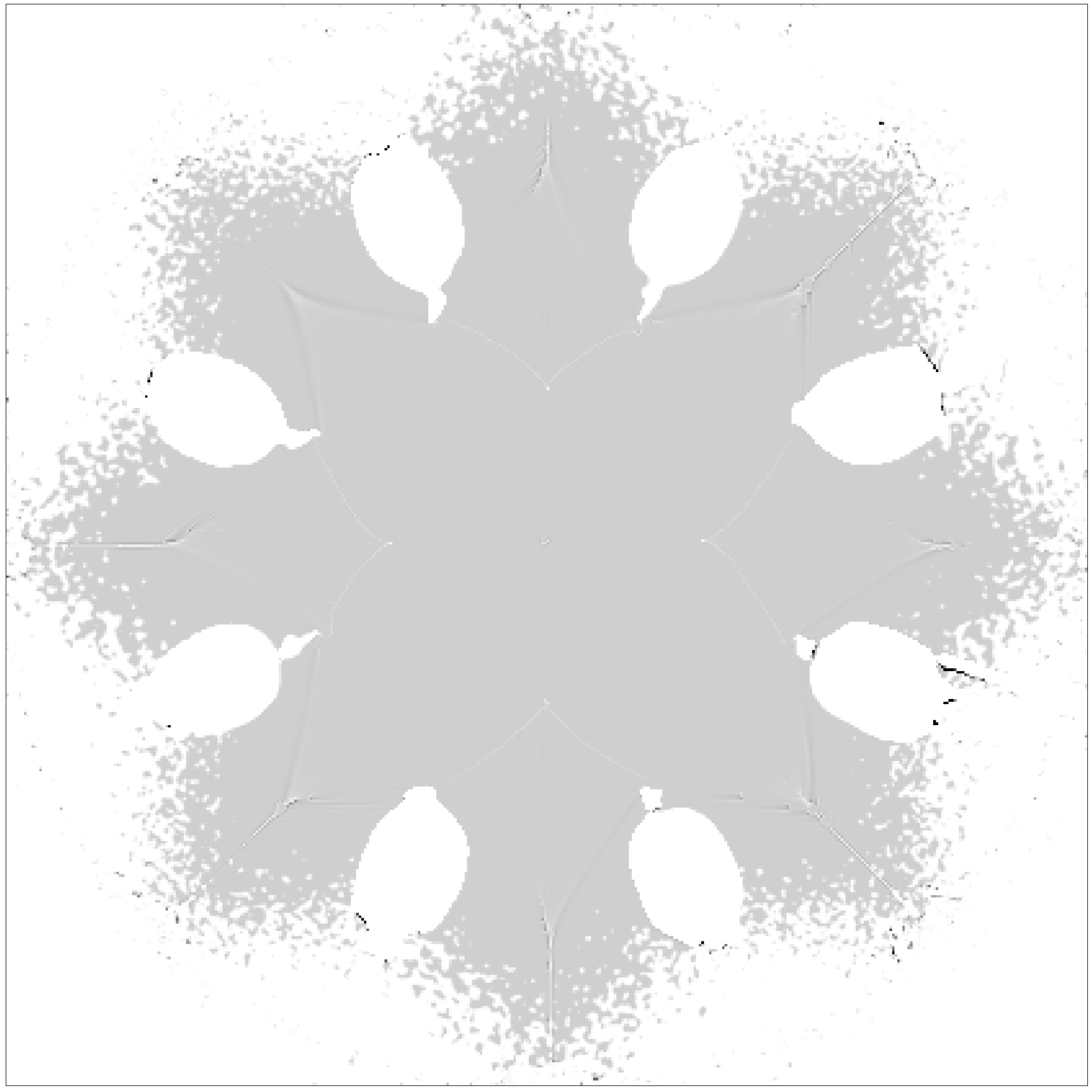}
	\end{subfigure}

    \centering
	\begin{subfigure}{0.45\textwidth}
	\centering
	\includegraphics[width=1\textwidth]{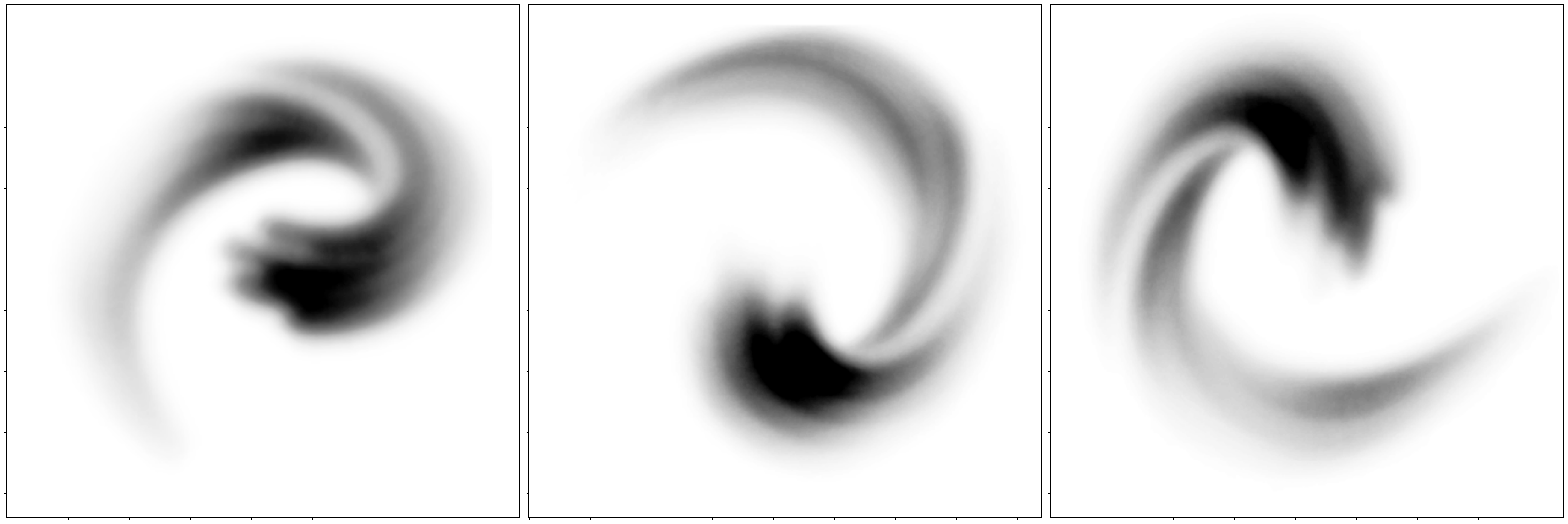}
        \caption{Sample measures.}
	\end{subfigure}
	\hspace{1cm}
	\begin{subfigure}{0.25\textwidth}
	\centering
	\includegraphics[width=0.7\textwidth]{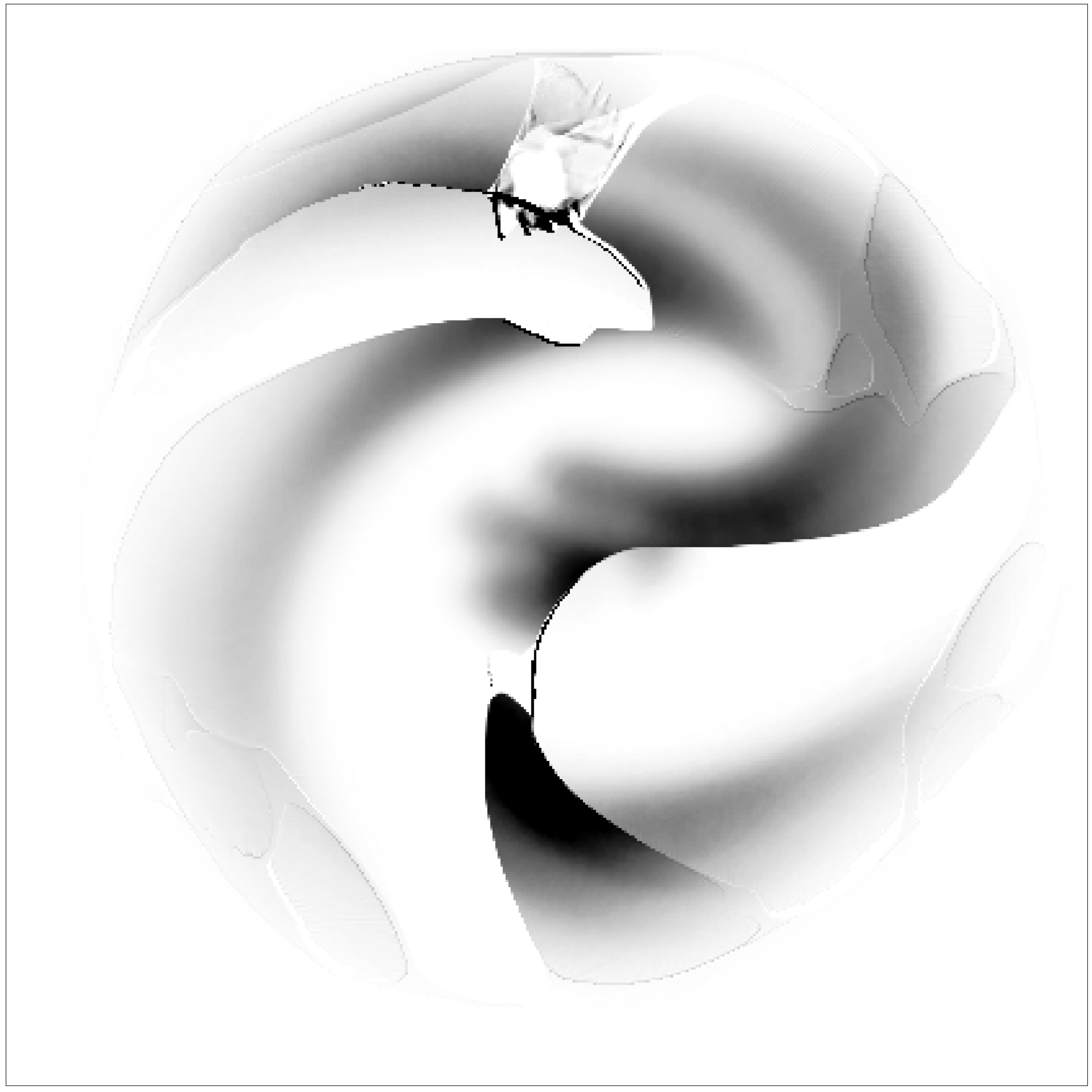}
        \caption{Wasserstein medians.}
	\end{subfigure}
	
	\caption{Some Wasserstein medians on a $420\times420$ grid computed with Douglas--Rachford up to $2000$ iterations, with a final residual of about $\sim 10^{-7}$, cf., Section \ref{sec:numerics}.}
	\label{fig:intro2}
    \end{figure}

\section{Definition, existence and basic examples}\label{sec:formulation_existence}

\paragraph{Setting.} Let $(\mathcal{X},d)$ be a \textit{proper} metric space, i.e.~a metric space in which closed balls are compact. This implies in particular that $(\cX, d)$ is \textit{Polish}, i.e.~separable and complete. Note that $(\mathcal{X},d)$ being proper is a natural assumption to define medians by minimization of weighted sums of distances; indeed this implies that for every integer $N\geq 1$, every $(x_1, \dots, x_N)\in \cX^N$ and every $\boldsymbol{\lambda}:=(\lambda_1,\dots,\lambda_N)$ in the simplex $\Delta_N$:
\[\Delta_N:= \Big\{ (\lambda_1,\dots,\lambda_N) \in \R_+^N \; : \; \sum_{i=1}^N \lambda_i =1\Big\},\] 
the set of medians of $(x_1, \dots, x_N)$ with weights $\boldsymbol{\lambda}$, defined by
\begin{equation}\label{defgroudmed}
\Ml(x_1, \dots, x_N):=\argmin_{x \in \mathcal{X}} \sum_{i=1}^N \lambda_i d(x_i, x)
\end{equation}
is a nonempty (and compact) subset of $\cX$.  

\begin{example}[\textbf{Medians on the real line}]
For $\cX=\R$ equipped with the distance associated with the absolute value, $N\ge 1$, $\bl=(\lambda_1, \dots, \lambda_N)\in \Delta_N$ and $\bx:=(x_1, \dots, x_N)\in \R^N$, $\Ml(\bx)$ is the set of minimizers of the convex, piecewise affine function  $x\mapsto  f(x):=\sum_{i=1}^N \lambda_i \vert x -x_i\vert$, this function being right and left differentiable at each point with corresponding one-sided derivative given by
\[f'(x^-)=\sum_{i \; : \; x_i <x} \lambda_i- \sum_{i \; : \; x_i \ge x} \lambda_i=2 \sum_{i \; : \; x_i <x} \lambda_i-1,  \;  f'(x^+) =2 \sum_{i \; : \; x_i \le x} \lambda_i-1.\]
We see that $x$ belongs to the median interval $\Ml(\bx)$ if and only if $f'(x^-) \le 0 \le f'(x^+)$, i.e.
\[\sum_{i \; : \; x_i <x} \lambda_i \le \frac{1}{2} \le \sum_{i \; : \;  x_i \le x} \lambda_i, \]
that is, $\Ml(\bx)= [\Ml^-(\bx), \Ml^+(\bx)]$, where $\Ml^-(\bx)$ and $\Ml^+(\bx)$ stand for the lower and upper medians respectively, which are given by:
\begin{equation}\label{defm+-}
\Ml^-(\bx):=\inf \Big\{ y \in \R \; : \;  \sum_{i \; : \;  x_i \le y} \lambda_i \ge \frac{1}{2} \Big\}, \;
\Ml^+(\bx):=\sup \Big\{ y \in \R \; : \;  \sum_{i \; : \;  x_i < y} \lambda_i \le \frac{1}{2} \Big\}.
\end{equation}
We shall use extensively properties of lower and upper medians when studying Wasserstein medians on $\R$ in Section \ref{sec:one_dim}. Obviously, since $f$ is affine in the neighbourhood of each point of $\R\setminus\{x_1, \dots, x_N\}$, both $\Ml^-(\bx)$ and $\Ml^+(\bx)$ belong to the sample $\{x_1, \dots, x_N\}$: 
\begin{equation}\label{attainmentpm}
I_\pm(\bx):=\{i =1, \dots, N \; : \;  \Ml^\pm(\bx) =x_i \} \neq \emptyset.
\end{equation}
Note also  both $\Ml^-$ and $\Ml^+$ are positively homogeneous and that setting $\be=(1, \dots, 1)$,
\[\Ml^\pm(\be)=1, \, \Ml^\pm(\alpha\bx)=\alpha  \Ml^\pm(\bx), \; \ \text{for all} \  \alpha \in \R_+.\]

Of course, in general, medians are highly non-unique. For instance if $N=2k$ is even, $\lambda_i=1/N$ and  $x_1<\dots< x_N$, the median interval is $[x_k, x_{k+1}]$. A mild condition guaranteeing uniqueness i.e.~$\Ml^-(\bx)= \Ml^+(\bx)$    for every $\bx$ is: 
\begin{equation}\label{eq:uniqueness_medians_1d}
	\text{There is no subset $I\subset \{1,\dots,N\}$ such that:} \ \sum_{i\in I} \lambda_{i}=\frac{1}{2}.
\end{equation}
Despite non-uniqueness, both selections $\Ml^+$ and $\Ml^-$ enjoy nice properties: obviously they are monotone in each of their arguments and invariant by translation, that is, for all $\bx \ge \by$ (i.e.~$\bx-\by\in \R_+^N$) we have $\Ml^\pm(\bx)\ge \Ml^\pm(\by)$, and for $\alpha \in \R$ it holds $\Ml^\pm(\bx + \alpha \be) = \Ml^\pm(\bx)+\alpha$. This implies in particular that for every $\bx$ and $\by$ one has
\begin{equation}\label{variationmpm}
 \Ml^\pm(\bx) + \min_{i=1, \dots, N} (y_i-x_i)  \le     \Ml^\pm(\by) \le  \Ml^\pm(\bx) + \max_{i=1, \dots, N} (y_i-x_i),
\end{equation}
so that  $\Ml^\pm$ are Lipschitz continuous. Inequality \eqref{variationmpm} will be very useful for studying horizontal and vertical Wasserstein median selections on the real line in Section \ref{sec:one_dim}. In fact, we will also need to use a slightly refined form of \eqref{variationmpm}, namely: for all $\bx$ there exists $\varepsilon>0$ such that for all $\by$ with $\Vert \bx-\by\Vert_{\infty} \leq \varepsilon$ it holds
\begin{equation}\label{variationmpmrefined}
  \Ml^\pm(\bx)  + \min_{i\in I_\pm(\bx)} (y_i-x_i) \le  \Ml^{\pm}(\by) \le  \Ml^\pm(\bx) + \max_{i\in I_\pm(\bx)} (y_i-x_i),
 \end{equation}
where we recall that $I_\pm(\bx)$ are given by \eqref{attainmentpm}. The proof of \eqref{variationmpmrefined} is postponed to the appendix. 
\end{example}\label{1dmed}

\begin{example}[\textbf{Torricelli--Fermat--Weber points}] Consider now $\cX=\R^d$ equipped with the  distance associated with the Euclidean norm $\vert  \cdot \vert$, $\bl\in \Delta_N$ and $(x_1, \dots, x_N)\in \cX^N$, by definition, $x\in \Ml(x_1, \dots, x_N)$ if and only if $x$ minimizes the convex function $\sum_{i=1}^N \lambda_i \vert \cdot -x_i\vert$ i.e.~satisfies the optimality condition
\[0 \in \sum_{i=1}^N    \lambda_i \partial \vert \cdot \vert (x-x_i),\]
where $\partial \vert \cdot \vert (x-x_i)$ is the subdifferential of the Euclidean norm at $x-x_i$:
\[\partial \vert \cdot \vert (x-x_i)=\{p\in \R^d \; : \; \vert p \vert \leq 1, \;  \langle p, x-x_i\rangle= \vert x-x_i\vert\}=\begin{cases}  B(0, 1)  & \mbox{ if $x=x_i$} \\ \frac{x-x_i}{\vert x-x_i\vert} & \mbox{ otherwise}\end{cases}\]
where $B(0,1)$ stands for the closed unit Euclidean ball. Therefore $x\in \Ml(x_1, \dots, x_N)$ if and only if there exist $p_1, \dots, p_N$ such that
\begin{equation}\label{euclidmedcns}
\vert p_i \vert \leq 1, \;  \langle p_i, x-x_i\rangle= \vert x-x_i\vert, \; i=1, \ldots, N, \mbox{ and }  \sum_{i=1}^N    \lambda_i p_i=0.
\end{equation}
Note that for $x\in \Ml(x_1, \dots, x_N)$ either $x=x_i$ for some $i$ or 
\[\sum_{i=1}^N \lambda_i \frac{x-x_i}{\vert x-x_i\vert}=0\]
so that in any case $x$ is a convex combination of  $x_1, \dots, x_N$, we thus have
\[ \Ml(x_1, \dots, x_N) \subset \mathrm{co} \{ x_1, \dots, x_N\}.\]

\end{example}\label{Torricelli}

Denote by $\cP(\cX)$ the set of Borel probability measures on $\cX$. Recall that on $\cP(\cX)$ the narrow topology is the coarsest topology making $\mu\in \cP(\cX) \mapsto \int_{\cX } f \d \mu$ continuous
for every continuous and bounded function $f$ on $\cX$ and that this topology is metrizable on $\cP(\cX)$ (so that there is no need to distinguish between narrow compactness and narrow sequential compactness). We denote by $\cP_1(\cX)$ the set of Borel probability measures with finite first moment i.e.~the set of $\mu \in \cP(\cX)$ for which for some (equivalently for all) $x_0 \in \cX$, $d(x_0, \cdot)\in L^1(\cX, \mu)$. We endow $\cP_1(\cX)$ with the \textit{Wasserstein} distance of order one:
\begin{equation*}
	W_1(\mu,\nu):=\inf_{\gamma \in \Pi(\mu,\nu)} \  \int_{\cX^2}d(x,y) \d\gamma(x,y),
\end{equation*} 
where $\Pi(\mu,\nu)$ is the set of transport plans between $\mu$ and $\nu$ i.e.~the set of Borel probability measures on $\cX^2$ with marginals $\mu$ and $\nu$. 
With this choice, the metric space $\left(\cP_1(\cX), W_1\right)$ is a Polish (but not necessarily proper) space.  Let us recall the Kantorovich--Rubinstein duality formula which expresses $W_1(\mu,\nu)$ as
\begin{equation}\label{kantorub}
	W_1(\mu,\nu):=\sup   \Big\{ \int_{\cX} u \d \mu- \int_{\cX} u \d \nu \; : \; \mbox{ $u:\cX \to \R$, $1$-Lipschitz} \Big\},
\end{equation} 
in particular, $W_1(\mu, \nu)$ is the dual Lipschitz semi-norm of $\mu-\nu$ and the linear interpolation $\mu_t:= (1-t)\mu+ t\nu$ for $t\in [0,1]$, is obviously a geodesic between $\mu$ and $\nu$ i.e.:
\begin{equation}\label{geodW1}
W_1(\mu_t,  \mu_s)=\vert t-s \vert W_1(\mu, \nu), \quad \ \text{for all} \  (t,s)\in [0,1]^2.
\end{equation}
Note that convergence in $\cP_1(\cX)$ for $W_1$ implies convergence for the narrow topology but is stronger unless $\cX$ is compact.
For proofs of these classical facts and more on Wasserstein distances, we refer to the textbooks \cite{ambrosio_gradient_2005, santambrogio_optimal_2015}.

\paragraph{Wasserstein medians.}  As mentioned in the introduction, on $\left(\cP_1(\cX), W_1\right)$, one can naturally define medians in the Fr\'echet sense as follows. Given $N\geq 1$,  $\bnu=(\nu_1, \dots, \nu_N) \in \cP_1(\cX)^N$ and $\boldsymbol{\lambda}:=(\lambda_1,\dots,\lambda_N)\in \Delta_N$, consider the weighted dispersion functional
\begin{equation}\label{disperfunc}
\mathcal{F}_{\bl, \bnu}(\mu):=  \sum_{i=1}^N \lambda_i W_1(\nu_i,\mu), \; \ \text{for all} \  \mu \in \cP_1(\cX)
\end{equation}
then Wasserstein medians are defined as minimizers of this dispersion functional:

\begin{defi}[\textbf{Wasserstein medians}] For $N\geq 1$, let $\bnu=(\nu_1,\dots,\nu_N) \in \cP_1(\cX)^N$ and $\boldsymbol{\lambda}=(\lambda_1,\dots,\lambda_N)\in \Delta_N$, defining $\mathcal{F}_{\bl, \bnu}(\mu)$  by \eqref{disperfunc}, 
we call Wasserstein median of $(\nu_1,\dots,\nu_N)$ with weights $\boldsymbol{\lambda}$  any solution of the following (convex) problem
	\begin{equation}\label{eq:Medianprob}
		v(\bl, \bnu):=\min_{\mu \in \cP_1(\cX)}   \; \mathcal{F}_{\bl, \bnu}(\mu)
	\end{equation}
	We denote by  $\Med_{\boldsymbol{\lambda}}(\nu_1,\dots,\nu_N)$ the set of all Wasserstein medians of $\bnu$ with weights $\boldsymbol{\lambda}$.
\end{defi}

The existence of a solution of \eqref{eq:Medianprob} follows from the direct method:  
\begin{lemma}[\textbf{Existence of Wasserstein medians}]\label{existence} Let $N\geq1$, $\bnu=(\nu_1,\dots,\nu_N) \in \mathcal{P}_1(\cX)^N$ and $\bl\in \Delta_N$, then there exists a minimizer of \eqref{eq:Medianprob} and the set $\Med_{\boldsymbol{\lambda}}(\nu_1,\dots,\nu_N)$ is a convex and narrowly compact  subset of $\mathcal{P}_1(\cX)$. 
\end{lemma}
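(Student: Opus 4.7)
The plan is the direct method. The functional $\mathcal{F}_{\bl,\bnu}$ is convex, being a convex combination of the maps $\mu \mapsto W_1(\nu_i,\mu)$, each of which is convex: if $\mu = (1-t)\mu_0 + t\mu_1$ and $\gamma_0, \gamma_1$ are optimal plans between $\mu_0, \mu_1$ and $\nu_i$, then $(1-t)\gamma_0 + t\gamma_1 \in \Pi(\mu,\nu_i)$ gives the desired inequality after integrating the cost. So the minimizer set is convex as soon as it is nonempty, and I only need to produce one minimizer and to check closedness plus tightness of the minimizer set.

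For existence, I fix a base point $x_0 \in \cX$ and apply Kantorovich--Rubinstein \eqref{kantorub} with the $1$-Lipschitz test function $\varphi := d(x_0, \cdot)$ to get, for every $\mu \in \cP_1(\cX)$,
\[ W_1(\nu_i, \mu) \geq \Bigl| \int_\cX \varphi \, \d\mu - \int_\cX \varphi \, \d\nu_i \Bigr|.\]
Choosing any index $i$ with $\lambda_i > 0$, this bound shows that along any minimizing sequence $(\mu_n)$ the first moments $\int \varphi \, \d\mu_n$ stay uniformly bounded. By Markov's inequality and the properness of $\cX$ (closed balls around $x_0$ are compact), this yields tightness of $(\mu_n)$, hence narrow relative compactness by Prokhorov, and I extract $\mu_n \rightharpoonup \mu^*$ along a subsequence.

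To pass to the limit I invoke narrow lower semicontinuity of each $W_1(\nu_i, \cdot)$ on $\cP(\cX)$, extended by $+\infty$ off $\cP_1(\cX)$; this is a standard consequence of the primal formulation, since optimal plans $\gamma_n \in \Pi(\nu_i, \mu_n)$ form a tight family (their marginals are), any narrow cluster plan $\gamma$ lies in $\Pi(\nu_i, \mu^*)$, and $\int d \, \d\gamma \leq \liminf_n \int d \, \d\gamma_n$ because $d$ is continuous and nonnegative. Summing over $i$ gives $\mathcal{F}_{\bl,\bnu}(\mu^*) \leq \liminf_n \mathcal{F}_{\bl,\bnu}(\mu_n) = v(\bl,\bnu)$; in particular each $W_1(\nu_i,\mu^*)$ is finite, so $\mu^* \in \cP_1(\cX)$ is a Wasserstein median.

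For the final claim, every $\mu \in \Medl(\bnu)$ satisfies $\mathcal{F}_{\bl,\bnu}(\mu) = v(\bl,\bnu)$, so the moment estimate above applies uniformly over $\Medl(\bnu)$, making it tight in $\cP_1(\cX)$; narrow closedness is immediate from the lower semicontinuity of $\mathcal{F}_{\bl,\bnu}$, yielding narrow compactness. The only genuinely delicate point is converting the first-moment bound into tightness, which is precisely where properness of $\cX$ (rather than mere Polishness) is used.
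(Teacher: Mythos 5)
Your proposal is correct and follows essentially the same approach as the paper: the direct method, combining a uniform first-moment bound (yielding tightness via properness and Prokhorov) with narrow lower semicontinuity of $\mathcal{F}_{\bl,\bnu}$, and convexity from convexity of the functional. The only cosmetic differences are that the paper gets the moment bound from the triangle inequality for $W_1$ rather than testing Kantorovich--Rubinstein against $d(x_0,\cdot)$, and reads lower semicontinuity directly off \eqref{kantorub} as a supremum of narrowly continuous affine functionals, whereas you argue through narrow limits of optimal plans.
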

\begin{proof}
The functional $\mathcal{F}_{\bl, \bnu}$ is l.s.c. for the narrow topology (this follows at once from \eqref{kantorub}) and by the triangle inequality for every $x_0\in \cX$ and every $\mu\in  \mathcal{P}_1(\cX)$ one has 
\[ W_1(\delta_{x_0}, \mu)=\int_{\cX} d(x_0, x) \d \mu(x) \leq  \mathcal{F}_{\bl, \bnu}(\mu) + \mathcal{F}_{\bl, \bnu}(\delta_{x_0}),\]
which implies that  the first moment is  uniformly bounded on  sublevel sets of $\mathcal{F}_{\bl, \bnu}$. Since $(\cX, d)$ is proper, this implies that sublevel sets of $\mathcal{F}_{\bl, \bnu}$  are tight hence narrowly relatively compact by Prokhorov's theorem. This implies nonemptiness and narrow compactness of $\Med_{\boldsymbol{\lambda}}(\nu_1,\dots,\nu_N)$, convexity follows from the convexity of $\cal{F}_{\bl, \bnu}$.
\end{proof}

Let us end this section with some simple explicit examples.

\begin{example}[\textbf{Medians of Dirac masses}] If $\nu_i=\delta_{x_i}$ is a Dirac mass for all $i =1,\dots, N$, then $\Med_{\boldsymbol{\lambda}}(\nu_1,\dots,\nu_N)$ is nothing but the set of probability measures supported on $\Ml(x_1, \dots, x_N)$. In particular, if $N=2$, $\cX=\R$,  $x_1 \leq x_2$ and $\bl=(1/2, 1/2)$, then $\Ml(x_1, x_2)=[x_1, x_2]$ so that $\Medl(\delta_{x_1}, \delta_{x_2})$ is the set of \emph{all} probability measures supported on $[x_1, x_2]$.
\end{example}

\begin{example}[\textbf{Threshold effect}]\label{rem:thresholdMed} Suppose that there is $J \subset\{1,\dots,N\}$ with $\sum_{j \in J} \lambda_j \geq \frac{1}{2}$ and  $\boldsymbol{\nu}:=(\nu_1,\dots,\nu_N)$ with $\nu_j = \rho$ for $j \in J$ for some $\rho \in \cP_1(\cX)$. Then a Wasserstein median of $\boldsymbol{\nu}$ is given by $\rho$ since for any $\tilde \rho \in \cP_1(\cX)$, denoting $J^c := \{1,\dots,N \} \setminus J$
\begin{align*}
    \sum_{i=1}^N \lambda_i W_1(\nu_i, \rho) & = \sum_{i \in J^c}  \lambda_i W_1(\nu_i, \rho) \\
    & \leq \sum_{i=1}^N \lambda_i W_1(\nu_i, \tilde \rho) + \sum_{i \in J^c}  \lambda_i W_1(\tilde \rho , \rho)- \sum_{i \in J} \lambda_i W_1(\nu_i, \tilde \rho) \\
    & = \sum_{i=1}^N \lambda_i W_1(\nu_i, \tilde \rho) + \underbrace{\left( \sum_{i \in J^c} \lambda_i -  \sum_{i \in J} \lambda_i \right)}_{\leq 0} W_1(\tilde \rho, \rho).
\end{align*}
Note that if $\sum_{j \in J} \lambda_j > \frac{1}{2}$, this also proves that the Wasserstein median is unique and equal to $\rho$. Note that this threshold effect is not specific to Wasserstein medians but holds for Fr\'echet medians in any metric space.
\end{example}

\begin{example}[\textbf{Medians of two measures}]
If $N=2$, $\nu_1 \neq \nu_2$, it follows from the previous example that when $\lambda_1 \in (1/2,1)$ (respectively $\lambda_1 \in (0,1/2)$) the median of $(\nu_1, \nu_2)$ with weights $(\lambda_1, 1-\lambda_1)$ is $\nu_1$ (respectively $\nu_2)$, when $\lambda_1=\lambda_2=1/2$, by the triangle inequality any interpolate $(1-t) \nu_1+t \nu_2$, $t\in [0,1]$ belongs to $\Med_{1/2, 1/2}(\nu_1,\nu_2)$. 

\end{example}

\begin{example}[\textbf{Medians of translated measures}]
Consider $\cX=\R^d$ endowed with the Euclidean distance, $\mu\in \cP_1(\cX)$, $(x_1, \dots, x_N)\in \cX^N$ and let $\tau_{x_i\#} \mu$ be the translation of $\mu$ by $x_i$ (i.e.~${\tau_{x_i}}_{\#} \mu(A)=\mu(A-x_i)$, for every Borel subset $A$ of $\R^d$). We claim that whenever $x \in \Ml(x_1, \dots, x_N)$ one has ${\tau_x}_{\#} \mu \in  \Medl({\tau_{x_1}}_{\#} \mu, \dots, {\tau_{x_N}}_{\#} \mu)$. To see this, let $(p_1, \dots, p_N)$ satisfy the optimality condition \eqref{euclidmedcns}, then we first have
\[\sum_{i=1}^N \lambda_i W_1( {\tau_{x_i}}_{\#}\mu, {\tau_x}_{\#} \mu) \leq \sum_{i=1}^N \lambda_i \vert x-x_i\vert=\sum_{i=1}^N \lambda_i \langle p_i,  x-x_i\rangle =-\sum_{i=1}^N \lambda_i \langle p_i , x_i\rangle.\]
Let now $\nu \in \cP_1(\cX)$, since $p_i \in B(0,1)$ the affine function $u_i(y):= \langle p_i, y+x-x_i\rangle$ is $1$-Lipschitz so that by the Kantorovich--Rubinstein formula
\begin{align*}
W_1({\tau_{x_i}}_{\#} \mu, \nu) &\geq \big\langle p_i,  \int_{\R^d} (y-x_i+x) \d \nu(y)\big\rangle- \big\langle p_i, \int_{\R^d} (y+x) \d\mu(y) \big\rangle \\
&= \big\langle  p_i, \int_{\R^d} (y-x_i)\d \nu(y)\big\rangle  - \big\langle  p_i , \int_{\R^d} y \d\mu(y) \big\rangle.
\end{align*}
Multiplying by $\lambda_i$, summing and using \eqref{euclidmedcns}, we obtain
\[ \sum_{i=1}^N \lambda_i W_1({\tau_{x_i}}_{\#} \mu, \nu)  \geq -\sum_{i=1}^N \lambda_i \langle p_i, x_i\rangle \geq \sum_{i=1}^N \lambda_i W_1( {\tau_{x_i}}_{\#}, {\tau_x}_{\#} \mu) \]
which shows that ${\tau_x}_{\#} \mu \in  \Medl({\tau_{x_1}}_{\#} \mu, \dots, {\tau_{x_N}}_{\#} \mu)$. 
\end{example}

\section{Stability and robustness}\label{sec:stability_robustness}

The stability with respect to perturbations of the sample measures is a crucial property for any location estimator especially when the underlying space $\cX$ is unbounded. This is why,  in this section, we will first investigate some stability properties of Wasserstein medians (improving the easy narrow stability to the stability in $W_1$-distance),  note that Theorem 5.5  in \cite{kroshnin_frechet_2018} establishes strong consistency results in a much more general framework. We will then show robustness to outliers by showing that the breakdown point of Wasserstein medians on an unbounded $\cX$ is at least $1/2$, the proof will be an easy adaptation of \cite{lopuhaa_breakdown_1991} revealing that the argument is in fact quite general and actually carries over to Fr\'echet medians on geodesic metric spaces. 

\subsection{Compactness in $W_1$ distance and stability with respect to data} 
 
 Let $N\geq 1$, $(\bl, \bnu)=(\lambda_1, \dots, \lambda_N, \nu_1, \dots, \nu_N)$ and $(\bl', \bnu')=(\lambda'_1, \dots, \lambda'_N, \nu'_1, \dots, \nu'_N)$ in $\Delta_N\times \cP_1(\cX)^N$, an obvious consequence of the triangle inequality is the fact that for any $\mu\in  \cP_1(\cX)$, one has
 \begin{equation}\label{ineggFF'}
 \mathcal{F}_{\bl, \bnu}(\mu) \leq \mathcal{F}_{\bl', \bnu'}(\mu) + \max_{i=1, \dots, N} W_1(\nu_i, \nu'_i)+  \sum_{i=1}^N \vert \lambda_i-\lambda'_i\vert  \max_{i=1,\dots, N} W_1(\nu'_i, \mu).
 \end{equation}
  This  pointwise inequality for the dispersions corresponding to $(\bl, \bnu)$ and $(\bl', \bnu')$, implies in particular that $ \mathcal{F}_{\bl', \bnu'}$ converges to $\ \mathcal{F}_{\bl, \bnu}$ uniformly on $W_1$ balls as
  \[  \sum_{i=1}^N \vert \lambda_i-\lambda'_i\vert  +  \max_{i=1, \dots, N} W_1(\nu_i, \nu'_i) \to 0.\]
  Let us also observe that for every $x_0\in \cX$, again by the triangle inequality, one also has the moment bound
  \begin{equation}\label{mb0}
\sup_{\mu \in \Medl(\bnu)} \int_{\cX} d(x_0, x) \d \mu(x) \leq  2 \max_{i=1, \dots, N} \int_{\cX} d(x_0, x_i) \d \nu_i(x).
  \end{equation}
Recalling the definition of $v(\bl, \bnu)$ from \eqref{eq:Medianprob}, \eqref{ineggFF'} and \eqref{mb0} show that $v$ is locally Lipschitz continuous for $W_1$. Combining the previous pointwise convergence with the narrow lower semicontinuity of $W_1$, \eqref{mb0} and the narrow compactness of measures with bounded first moments, we straightforwardly get:

\begin{lemma}\label{gammaeasy}
Let $N\geq 1$, $(\bl, \bnu)=(\lambda_1, \dots, \lambda_N, \nu_1, \dots, \nu_N) \in \Delta_N \times \cP_1(\cX)^N$ and  $(\bl^n, \bnu^n)_{n \in \mathbb{N}}= (\lambda_1^n, \dots, \lambda^n_N, \nu_1^n, \dots, \nu^n_N)_{n \in \mathbb{N}}$ be a sequence in  $\Delta_N \times \cP_1(\cX)^N$ such that 
\begin{equation}\label{datacv}
\sum_{i=1}^N \vert \lambda_i^n-\lambda_i\vert  +  \max_{i=1, \dots, N} W_1(\nu_i^n,  \nu_i) \to 0 \mbox{ as $n\to \infty$.}
\end{equation}
Then, $ \mathcal{F}_{\bl^n, \bnu^n}$ $\Gamma$-converges to  $\mathcal{F}_{\bl, \bnu}$ for the narrow topology, in particular if $\mu^n \in \Medln(\bnu^n)$ for all $n \in \mathbb{N}$, narrow cluster points of $(\mu^n)_{n \in \mathbb{N}}$ belong to $\Medl(\bnu)$.

\end{lemma}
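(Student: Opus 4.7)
My plan is to verify both halves of $\Gamma$-convergence on $(\cP_1(\cX), \text{narrow})$ and then to deduce the statement on narrow cluster points of medians via the classical link between $\Gamma$-convergence, equicoercivity, and convergence of minimizers.

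For the \emph{lim sup} (recovery sequence), given $\mu\in \cP_1(\cX)$ I would take the constant sequence $\mu^n:=\mu$ and apply \eqref{ineggFF'} with the roles of $(\bl,\bnu)$ and $(\bl^n,\bnu^n)$ swapped; the two error terms vanish thanks to \eqref{datacv}, giving $\limsup_n \mathcal{F}_{\bl^n,\bnu^n}(\mu)\le \mathcal{F}_{\bl,\bnu}(\mu)$.

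For the \emph{lim inf}, fix a narrowly convergent sequence $\mu^n \to \mu$ in $\cP_1(\cX)$. If $\liminf_n \mathcal{F}_{\bl^n,\bnu^n}(\mu^n)=+\infty$ there is nothing to prove; otherwise I would extract a subsequence along which this liminf is a finite limit, and combine a triangle inequality against some $\delta_{x_0}$ (in the spirit of \eqref{mb0}) with the $W_1$-convergence $\nu_i^n \to \nu_i$ to obtain a uniform first-moment bound on $\mu^n$, and hence a uniform bound on $W_1(\nu_i,\mu^n)$. Using the triangle inequality $\lambda_i^n W_1(\nu_i^n,\mu^n) \ge \lambda_i^n W_1(\nu_i,\mu^n) - \lambda_i^n W_1(\nu_i^n,\nu_i)$, summing on $i$, exploiting $\lambda_i^n \to \lambda_i$, and invoking the narrow lower semicontinuity of $W_1(\nu_i,\cdot)$ provided by \eqref{kantorub}, one arrives at $\liminf_n \mathcal{F}_{\bl^n,\bnu^n}(\mu^n) \ge \mathcal{F}_{\bl,\bnu}(\mu)$.

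For the statement on medians, the hypothesis $\nu_i^n \to \nu_i$ in $W_1$ implies $\sup_n \max_i \int d(x_0,\cdot)\,\d\nu_i^n<\infty$, so applying \eqref{mb0} to the data $\bnu^n$ yields a uniform first-moment bound on any $\mu^n \in \Medln(\bnu^n)$. Properness of $\cX$ together with Prokhorov's theorem then ensures narrow relative compactness of $(\mu^n)_n$, and Fatou applied to $d(x_0,\cdot)$ places any narrow cluster point in $\cP_1(\cX)$. The two $\Gamma$-inequalities above then identify such a cluster point as a minimizer of $\mathcal{F}_{\bl,\bnu}$, i.e.~as an element of $\Medl(\bnu)$.

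The only genuinely delicate step is the lim inf, where one must simultaneously handle the convergence of the weights (possibly with some $\lambda_i$ vanishing), the $W_1$-perturbation of the base measures, and the fact that the test sequence converges only narrowly. The preliminary first-moment control on $\mu^n$ is exactly what permits the clean interchange of $\liminf$ with the weighted sum; everything else reduces to applications of \eqref{ineggFF'}, \eqref{mb0}, and the triangle inequality.
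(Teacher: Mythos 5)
Your proposal is correct and follows essentially the same route as the paper, which simply invokes the pointwise convergence coming from \eqref{ineggFF'} (for the $\Gamma$-limsup, with a constant recovery sequence), the narrow lower semicontinuity of $W_1$ from \eqref{kantorub} (for the $\Gamma$-liminf), and the moment bound \eqref{mb0} together with Prokhorov's theorem (for tightness of medians and identification of cluster points). The only small overstatement is the claim that the uniform first-moment bound on $\mu^n$ is what permits interchanging $\liminf$ with the finite weighted sum in the liminf inequality — that interchange holds for any nonnegative finite sum regardless of moment bounds; the moment bound is really only needed for the tightness/compactness of the median sequence and to place the narrow limit in $\cP_1(\cX)$.
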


One can improve the  previous (elementary and expected) result by stability in $W_1$ distance as follows (for more general results of this type, we refer the reader to \cite{kroshnin_frechet_2018}):

\begin{theorem}[\textbf{Stronger stability of Wasserstein medians}]\label{stability}
Let $N\geq 1$, $(\bl, \bnu) \in \Delta_N \times \cP_1(\cX)^N$, $(\bl^n, \bnu^n)_{n \in \mathbb{N}}$ be a sequence in  $\Delta_N \times \cP_1(\cX)^N$ such that \eqref{datacv} holds and let $\mu^n \in \Medln(\bnu^n)$ for all $n \in \mathbb{N}$, then $(\mu^n)_{n \in \mathbb{N}}$ admits a subsequence that converges for $W_1$ to some $\mu \in  \Medl(\bnu)$. In particular $\Medl(\bnu)$ is compact and  the set-valued map $(\bl, \bnu) \in \Delta_N \times \cP_1(\cX)^N \mapsto \Medl(\bnu) \subset \cP_1(\cX)$ has a closed graph for the $W_1$ distance.
\end{theorem}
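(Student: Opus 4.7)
The plan is to follow the direct method: extract a narrowly convergent subsequence of $(\mu^n)$, identify its narrow limit as a Wasserstein median of $\bnu$ via Lemma~\ref{gammaeasy}, and then upgrade this narrow convergence to $W_1$-convergence. The a priori bound \eqref{mb0} applied to each $(\bl^n,\bnu^n)$ together with the $W_1$-convergence $\nu_i^n\to\nu_i$ gives a uniform bound on the first moments of $(\mu^n)$. Since $(\cX,d)$ is proper, Markov's inequality and compactness of closed balls yield tightness of $\{\mu^n\}$, so Prokhorov's theorem produces a narrowly convergent subsequence $\mu^{n_k}\to\mu$; Lemma~\ref{gammaeasy} guarantees $\mu\in\Medl(\bnu)$.

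As a preliminary but crucial estimate, combining \eqref{ineggFF'} with the uniform boundedness of $\max_i W_1(\nu_i^{n_k},\mu^{n_k})$ yields $|\mathcal{F}_{\bl^{n_k},\bnu^{n_k}}(\mu^{n_k})-\mathcal{F}_{\bl,\bnu}(\mu^{n_k})|=o(1)$, while $\mathcal{F}_{\bl^{n_k},\bnu^{n_k}}(\mu^{n_k})=v(\bl^{n_k},\bnu^{n_k})\to v(\bl,\bnu)=\mathcal{F}_{\bl,\bnu}(\mu)$ by the data-continuity of $v$ derived from \eqref{ineggFF'}. This gives $\limsup_k\mathcal{F}_{\bl,\bnu}(\mu^{n_k})\le \mathcal{F}_{\bl,\bnu}(\mu)$; combined with the narrow lower semicontinuity of each $W_1(\nu_i,\cdot)$ coming from \eqref{kantorub}, splitting the sum $\mathcal{F}_{\bl,\bnu}=\sum_i\lambda_i W_1(\nu_i,\cdot)$ forces the termwise convergence $W_1(\nu_i,\mu^{n_k})\to W_1(\nu_i,\mu)$ for every $i$ with $\lambda_i>0$.

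To finish, it remains to establish uniform integrability of $d(x_0,\cdot)$ under $(\mu^{n_k})$. Fix any $i_0$ with $\lambda_{i_0}>0$ and pick an optimal plan $\gamma^k\in\Pi(\nu_{i_0},\mu^{n_k})$. Its marginals are tight, so up to a further subsequence $\gamma^k\to\gamma\in\Pi(\nu_{i_0},\mu)$ narrowly; lower semicontinuity together with the termwise convergence above forces $\gamma$ to be optimal and $\int d(x,y)\,\d\gamma^k\to\int d(x,y)\,\d\gamma$, which, combined with the narrow convergence of $(\gamma^k)$, yields uniform integrability of $(x,y)\mapsto d(x,y)$ under $(\gamma^k)$. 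Using $d(x_0,y)\le d(x_0,x)+d(x,y)$ and the inclusion $\{d(x_0,y)>R\}\subset\{d(x_0,x)>R/2\}\cup\{d(x,y)>R/2\}$, the tail $\int_{d(x_0,y)>R} d(x_0,y)\,\d\mu^{n_k}(y)$ decomposes as a sum of four integrals against $\gamma^k$, each of which is either the integral of $d(x_0,\cdot)$ over a set of arbitrarily small $\nu_{i_0}$-measure (uniformly in $k$, since $\gamma^k(\{d(x,y)>R/2\})\le 2W_1(\nu_{i_0},\mu^{n_k})/R$) or the integral of $d(x,y)$ over a set of arbitrarily small $\gamma^k$-measure. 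Absolute continuity of the integral for $d(x_0,\cdot)\in L^1(\nu_{i_0})$ together with the just-established UI of $d$ under $\gamma^k$ make each term go to $0$ uniformly in $k$ as $R\to\infty$. Narrow convergence together with uniform integrability of first moments then gives $W_1(\mu^{n_k},\mu)\to 0$ by the standard characterization \cite{ambrosio_gradient_2005,santambrogio_optimal_2015}, and the compactness and closed-graph statements follow at once.

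The main obstacle is precisely this last step: the a priori moment bound \eqref{mb0} alone does not prevent mass from escaping to infinity in $\mu^{n_k}$, and $W_1$-tightness has to be extracted by coupling $\mu^{n_k}$ to the fixed measure $\nu_{i_0}$ through the optimal plan $\gamma^k$ and exploiting the optimality-driven convergence $W_1(\nu_{i_0},\mu^{n_k})\to W_1(\nu_{i_0},\mu)$, which is the only ingredient that genuinely distinguishes this result from the weaker narrow stability stated in Lemma~\ref{gammaeasy}.
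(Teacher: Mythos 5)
Your proof is correct and follows essentially the same strategy as the paper: extract a narrowly convergent subsequence via the moment bound, identify the limit as a median through the $\Gamma$-convergence of Lemma~\ref{gammaeasy}, then obtain uniform integrability of the first moments of $(\mu^{n_k})$ by coupling to a fixed sample marginal via an optimal plan, using narrow convergence of the plans together with convergence of the transport costs (Lemma~5.1.7 in \cite{ambrosio_gradient_2005}) to get uniform integrability of $d$ under the plans, and finally transferring this to $\mu^{n_k}$ via the triangle inequality.

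There is one small but pleasant variation worth noting. You first establish the termwise convergence $W_1(\nu_i,\mu^{n_k})\to W_1(\nu_i,\mu)$ (by combining the upper bound $\limsup\mathcal{F}_{\bl,\bnu}(\mu^{n_k})\le\mathcal{F}_{\bl,\bnu}(\mu)$ with the narrow lower semicontinuity of each term, which forces all inequalities to be equalities whenever $\lambda_i>0$) and then couple $\mu^{n_k}$ to the \emph{fixed} limit marginal $\nu_{i_0}$, so that $\gamma^k\in\Pi(\nu_{i_0},\mu^{n_k})$. The paper instead couples to $\nu_1^n$ (varying) and therefore also needs to invoke uniform integrability of the first moments of $(\nu_1^n)$, which it gets from the $W_1$-convergence $\nu_1^n\to\nu_1$; your choice makes that step trivial since the first marginal does not depend on $k$. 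For the final tail estimate, the paper uses the truncated function $\Phi_R$ and the inequality $\Phi_R(t+s)\le 2(\Phi_{R/2}(t)+\Phi_{R/2}(s))$, whereas you split $\{d(x_0,y)>R\}$ into two events and then $d(x_0,y)\le d(x_0,x)+d(x,y)$ into two summands, ending up with four pieces. This is fine, though the term $\int_{\{d(x,y)>R/2\}}d(x_0,x)\,\d\gamma^k$ deserves a word more than ``absolute continuity of the integral'': the set is not a cylinder, so you need a small truncation $d(x_0,x)\le M+d(x_0,x)\mathds{1}_{\{d(x_0,\cdot)>M\}}$ to reduce first to a $\gamma^k$-measure bound and then to a tail of $d(x_0,\cdot)\in L^1(\nu_{i_0})$. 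With that filled in, the argument is complete and gives the compactness and closed-graph statements exactly as you say.
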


\begin{proof}
We already know from Lemma \ref{gammaeasy} that $(\mu^n)_{n\in \mathbb{N}}$ has a (not relabeled) subsequence that converges narrowly to some $\mu$ which belongs to  $\Medl(\bnu)$. To improve narrow to $W_1$ convergence, it follows from Proposition 7.1.5 of \cite{ambrosio_gradient_2005}, that it is enough to show that (some subsequence of) $(\mu^n)_{n\in \mathbb{N}}$ has uniformly integrable moments. More precisely, fixing $x_0 \in \cX$ and for $R>0$ denoting by $B(x_0, R)$ the open ball of radius $R$, we have to show that (passing to a subsequence if necessary)
\begin{equation}\label{uimomentmu}
\lim_{R \to +\infty} \sup_n \int_{\cX\setminus B(x_0, R)} d(x_0, x) \d \mu^n(x) =0.
\end{equation}
Let $\gamma_i^n \in \Pi(\nu_i^n, \mu^n)$ such that $\int_{\cX^2} d(x_i, x) \d \gamma_i^n(x_i, x)=W_1(\nu_i^n, \mu^n)$, since both sequences $(\nu_i^n)_{n\in \mathbb{N}}$ and $(\mu^n)_{n\in \mathbb{N}}$ are tight so is $(\gamma_i^n)_{n\in \mathbb{N}}$, passing to subsequences if necessary, we may thus assume that $(\gamma_i^n)_{n\in \mathbb{N}}$ converges narrowly to some $\gamma_i\in \cP(\cX\times \cX)$. Of course $\gamma_i \in \Pi(\nu_i, \mu)$ and then
\[W_1(\nu_i, \mu) \leq \int_{\cX^2} d(x_i, x) \d \gamma_i(x_i, x) \leq \liminf_n  \int_{\cX^2} d(x_i, x) \d \gamma_i^n(x_i, x)= \liminf_n W_1(\nu_i^n, \mu^n).  \]
We deduce from Lemma \ref{gammaeasy} and the fact that $(\nu_i^n)_{n\in \mathbb{N}}$ and $(\mu^n)_{n\in \mathbb{N}}$ have uniformly bounded moments
\[\sum_{i=1}^N \lambda_i  W_1(\nu_i, \mu)=\lim_n \sum_{i=1}^N \lambda_i^n  W_1(\nu_i^n, \mu^n)  \geq \sum_{i=1}^N \liminf_n  \lambda_i^n  W_1(\nu_i^n, \mu^n)=\sum_{i=1}^N \lambda_i \liminf_n   W_1(\nu_i^n, \mu^n). \]
Hence, for every $i$ for which $\lambda_i>0$, one has $W_1(\nu_i, \mu)=\liminf_n   W_1(\nu_i^n, \mu^n)$. Assuming without loss of generality that $\lambda_1>0$, we thus have
\[W_1(\nu_1, \mu)=\int_{\cX^2} d(x_1, x) \d \gamma_1(x_1, x) = \liminf_n  \int_{\cX^2} d(x_1, x) \d \gamma_1^n(x_1, x).\]
Passing to a subsequence if necessary, we may assume that the liminf of the right hand side above is a true limit and then, using Lemma 5.1.7 of \cite{ambrosio_gradient_2005}, we deduce that 
\begin{equation}\label{uimomentgamma}
\lim_{R \to +\infty} \sup_n \int_{\{(x_1, x) \in \cX^2 \; : \; d(x_1, x) \geq R\}} d(x_1, x) \d \gamma_1^n(x_1,x) =0.
\end{equation}
Note also that since $(\nu_1^n)_{n\in \mathbb{N}}$ converges in $W_1$ we also have 
\begin{equation}\label{uimomentnu1n}
\lim_{R \to +\infty} \sup_n \int_{\cX\setminus B(x_0, R)} d(x_0, x_1) \d \nu_1^n(x_1) =0.
\end{equation}
Defining for $R>0$ and $t\geq 0$, 
\[\Phi_R(t):=\begin{cases} t & \mbox{ if $t \ge R$,}\\ 0 & \mbox{ else,}\end{cases}\]
note that $\Phi_R$ is non decreasing and 
\[\Phi_R(t+s) \leq 2 \Big(\Phi_{\frac{R}{2}} (t)+ \Phi_{\frac{R}{2}} (s) \Big),\]
so by the triangle inequality for every $(x, x_1) \in \cX^2$, we have
\[\Phi_R( d(x_0, x)) \leq 2 \Big(\Phi_{\frac{R}{2}} (d(x_0, x_1))+ \Phi_{\frac{R}{2}} (d(x_1, x)) \Big). \]
Integrating with respect to $\gamma_1^n$ which has marginals $\nu_1^n$ and $\mu^n$ yields 
\[ \begin{split} \int_{\cX\setminus B(x_0, R)} d(x_0, x) \d \mu^n(x)&= \int_{\cX}  \Phi_R( d(x_0, x)) \d \mu^n(x)= \int_{\cX}  \Phi_R( d(x_0, x)) \d \gamma_1^n(x_1, x) \\
 &\leq 2  \int_{\cX}  \Phi_{\frac{R}{2}}( d(x_0, x_1)) \d \nu_1^n(x_1) + 2 \int_{\cX^2}  \Phi_{\frac{R}{2}}( d(x_1, x)) \d \gamma_1^n(x_1,x)\\
 &= 2  \int_{\cX\setminus B(x_0, \frac{R}{2})} d(x_0, x_1) \d \nu_1^n(x_1)\\
& \quad +  2 \int_{\{(x_1, x) \in \cX^2 \; : \; d(x_1, x) \geq \frac{R}{2}\}} d(x_1, x) \d \gamma_1^n(x_1,x).
 \end{split}\]
Then, \eqref{uimomentmu} readily follows from \eqref{uimomentgamma} and \eqref{uimomentnu1n}. 

\end{proof}

\subsection{Robustness of Wasserstein medians} 

In statistics, a popular robustness index is the so-called \textit{break-down point}. Roughly speaking, it is the largest fraction of the input data which could be corrupted (i.e.~changed arbitrarily) without moving the estimation too far from the original estimation for the non-corrupted data. It is well known that the break-down point of geometric medians with uniform weights is approximately $\tfrac{1}{2}$, see, e.g.~Theorem 2.1 and 2.2 in \cite{lopuhaa_breakdown_1991}, so that even corrupting about half  of the data, we can stay rather confident on the output. In this section, we prove a similar result for Wasserstein medians. To do so, we first recall some basic facts about break-down points, starting with a definition of the break-down point adapted to the case of a non-unique estimator.
\begin{defi}[\textbf{Break-down point}]
Let $(\cX,d)$ be a metric space. Let $N\geq 2$ and $\boldsymbol{\lambda}=(\lambda_1,\dots,\lambda_N) \in \Delta_N$. For a set-valued map $t_{\boldsymbol{\lambda}}:   \cX^N \rightarrow 2^\cX$ with nonempty values, we define its \textit{break-down point} associated to the weights $\boldsymbol{\lambda}$ at $\bx=(x_1,\dots,x_N)\in \mathcal{X}^N$ by
\begin{equation*}
    b(t_{\boldsymbol{\lambda}}(\bx)) := \min \bigg\{ \sum_{i \in I}\lambda_i: \ I \subset \{1,\dots,N\}, \sup_{\substack{\by^I \in \cX^N \\ \by^I_j = \bx_j \ \forall  j \notin I}}  \left\{ d(y,x)\; : \; y \in t_{\boldsymbol{\lambda}}(\by^I), \ x \in t_{\boldsymbol{\lambda}}(\bx) \right\}=+\infty \bigg\}.
\end{equation*}

\end{defi}

We now state the main theorem for Wasserstein medians, where the reference metric space is $\mathcal{P}_1(\cX)$ equipped with the $W_1$ distance. The proof is a slight generalization of Theorem 2.2.~in \cite{lopuhaa_breakdown_1991}.

\begin{theorem}[\textbf{Break-down point of Wasserstein medians}]\label{thm:breakdownpointofwassersteinmedians}
Suppose the  metric space $ (\cX, d)$ is proper and unbounded.   Let $N\geq 2,$ $\boldsymbol{\nu}:=(\nu_1,\dots,\nu_N)\in \mathcal{P}_1(\cX)^N$ and $\boldsymbol{\lambda} := (\lambda_1,\dots,\lambda_N) \in \Delta_N$.
Then the break-down point of $\Med_{\boldsymbol{\lambda}}(\boldsymbol{\nu})$ is given by
	\begin{equation}\label{eq:break_down_point}
	b(\Med_{\boldsymbol{\lambda}}(\boldsymbol{\nu}))= \min \bigg\{ \sum_{j \in J}\lambda_j\; : \; J \subset \{1,\dots,N \}, \ \sum_{j \in J}\lambda_j \geq \frac{1}{2}\bigg\}.
	\end{equation}
\end{theorem}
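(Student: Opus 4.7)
My plan is to prove the two matching bounds $b(\Medl(\bnu)) \leq b^{\star}$ and $b(\Medl(\bnu)) \geq b^{\star}$, where $b^{\star}$ denotes the right-hand side of \eqref{eq:break_down_point}. Note that since $b^{\star}$ is the smallest subset-sum of weights that is $\geq 1/2$, the condition $\sum_{i \in I} \lambda_i < b^{\star}$ is equivalent to $\sum_{i \in I} \lambda_i < 1/2$, a fact I will use throughout.

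\textbf{Upper bound} ($b \leq b^{\star}$). Pick $J \subset \{1,\dots,N\}$ realizing the minimum in $b^{\star}$, so that $\sum_{j \in J} \lambda_j = b^{\star} \geq \tfrac{1}{2}$. Fix $x_0 \in \cX$; since $(\cX,d)$ is unbounded and proper, I can choose a sequence $x_R \in \cX$ with $d(x_0, x_R) \to +\infty$. Corrupt the data by setting $\nu_j^R := \delta_{x_R}$ for $j \in J$ and $\nu_j^R := \nu_j$ for $j \notin J$. Since $\sum_{j \in J} \lambda_j \geq \tfrac{1}{2}$, the threshold effect of Example~\ref{rem:thresholdMed} applies and yields $\delta_{x_R} \in \Medl(\bnu^R)$. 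For any fixed $\mu \in \Medl(\bnu)$ (which has finite first moment by Lemma~\ref{existence}), the triangle inequality gives $W_1(\delta_{x_R}, \mu) \geq d(x_0, x_R) - \int_{\cX} d(x_0, y) \d\mu(y) \to +\infty$, so the supremum in the definition of break-down diverges with the choice $I = J$. This proves $b \leq \sum_{j \in J} \lambda_j = b^{\star}$.

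\textbf{Lower bound} ($b \geq b^{\star}$). Let $I \subset \{1,\dots,N\}$ with $\lambda_I := \sum_{i \in I} \lambda_i < b^{\star}$, so $\lambda_I < \tfrac{1}{2}$ and $\lambda_{I^c} - \lambda_I = 1 - 2\lambda_I > 0$. Fix an arbitrary reference $\rho \in \cP_1(\cX)$ (for instance a median of the uncorrupted $\bnu$), and let $\bnu^I$ be any corruption coinciding with $\bnu$ outside $I$, with corresponding median $\mu^I \in \Medl(\bnu^I)$. From the optimality of $\mu^I$ against $\rho$,
\begin{equation*}
\sum_{i \in I^c} \lambda_i W_1(\nu_i, \mu^I) + \sum_{i \in I} \lambda_i W_1(\nu_i^I, \mu^I) \leq \sum_{i \in I^c} \lambda_i W_1(\nu_i, \rho) + \sum_{i \in I} \lambda_i W_1(\nu_i^I, \rho).
\end{equation*}
Applying the reverse triangle inequality $W_1(\nu_i^I, \mu^I) \geq W_1(\nu_i^I, \rho) - W_1(\mu^I, \rho)$ to the second sum on the left, cancelling the terms involving $\nu_i^I$, and then using $W_1(\nu_i, \mu^I) \geq W_1(\rho, \mu^I) - W_1(\nu_i, \rho)$ for $i \in I^c$, one arrives at
\begin{equation*}
(\lambda_{I^c} - \lambda_I)\, W_1(\rho, \mu^I) \leq 2 \sum_{i \in I^c} \lambda_i W_1(\nu_i, \rho).
\end{equation*}
The right-hand side is finite and independent of the corruption, so $W_1(\rho, \mu^I)$ is bounded uniformly over all corruptions $\bnu^I$ of $\bnu$ on $I$. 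Combined with the triangle inequality and the boundedness of $W_1(\rho, \mu)$ for $\mu \in \Medl(\bnu)$ (Lemma~\ref{existence}), the supremum in the break-down definition is finite for this $I$. Hence no such $I$ can break down $\Medl(\bnu)$, which yields $b \geq b^{\star}$.

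The main technical point is the two-sided triangle-inequality manipulation in the lower bound, mirroring the classical Lopuhaä--Rousseeuw argument in \cite{lopuhaa_breakdown_1991}; the rest is largely bookkeeping. The only place properness and unboundedness of $(\cX,d)$ are used is to produce the diverging Dirac masses in the upper bound and to ensure $\Medl$ is nonempty via Lemma~\ref{existence}.
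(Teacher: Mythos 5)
Your upper bound is the same as the paper's (corrupt a sub-sample of total weight $\geq 1/2$ with far-away Diracs, invoke the threshold effect). Your lower bound, however, takes a genuinely different route. The paper follows Lopuha\"a--Rousseeuw closely and exploits the fact that $(\cP_1(\cX),W_1)$ is a \emph{geodesic} space: it introduces the ball $B_{2C}(\nu)$, quantity $\xi=\mathsf{Dist}(\mu,B_{2C}(\nu))$, and for each uncorrupted index $j$ runs along the geodesic $t\mapsto(1-t)\nu_j+t\mu$ to pick a point on the sphere $\{W_1(\nu,\cdot)=2C\}$, which upgrades the reverse triangle inequality to the sharper estimate $W_1(\nu_j,\mu)\geq W_1(\nu_j,\nu)+\xi$. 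You instead keep everything inside the plain triangle inequality: starting from the optimality of $\mu^I$ against the reference $\rho$, you substitute lower bounds $W_1(\nu_i^I,\mu^I)\geq W_1(\nu_i^I,\rho)-W_1(\mu^I,\rho)$ for $i\in I$, cancel the corrupted terms, and then again substitute $W_1(\nu_i,\mu^I)\geq W_1(\rho,\mu^I)-W_1(\nu_i,\rho)$ for $i\in I^c$ to land on
\[
(\lambda_{I^c}-\lambda_I)\,W_1(\rho,\mu^I)\ \leq\ 2\sum_{i\in I^c}\lambda_i W_1(\nu_i,\rho),
\]
which is finite and corruption-independent as soon as $\lambda_I<1/2$. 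I checked the algebra and the inequality directions; the argument is correct. What this buys you: your proof makes no use of geodesics, so it applies verbatim to Fr\'echet medians on \emph{any} metric space where they exist, whereas the paper only claims its argument extends to geodesic metric spaces. The paper's geodesic version gives a marginally tighter explicit bound on $W_1(\nu,\mu)$ (the constant $\tfrac{2C(1-\delta)}{1-2\delta}$ vs.\ your $\tfrac{2C}{1-2\lambda_I}$ after setting $\rho$ to a median), but that is immaterial for the break-down statement. One small citation slip: when you say that $W_1(\rho,\mu)$ is bounded uniformly over $\mu\in\Medl(\bnu)$, the right reference is the moment bound \eqref{mb0} (or the $W_1$-compactness of Theorem~\ref{stability}) rather than Lemma~\ref{existence}, which only gives narrow compactness; narrow compactness alone does not control first moments.
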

\begin{proof}
For future reference, let us denote by $B$ the right hand-side of \eqref{eq:break_down_point}. Let us take $\nu \in \Med_{\boldsymbol{\lambda}}(\nu_1,\dots,\nu_N)$ and $I \subset \{1,\dots,N\}$ such that $\sum_{i \in I} \lambda_i < \frac{1}{2}$. Denote by $\boldsymbol{\mu}:=(\mu_1,\dots,\mu_N) \in \cP_1(\cX)^N$ a corrupted collection of $\boldsymbol{\nu}:=(\nu_1,\dots,\nu_N)$, i.e.~such that $\mu_j = \nu_j$ for all $j \notin I$. Let 
\begin{equation}\label{Cetdelta}
C:=\max\limits_{\rho \in \Med_{\boldsymbol{\lambda}}(\nu_1,\dots,\nu_N)}\max\limits_{1 \leq i \leq N} W_1(\rho,\nu_i), \; 
\delta := \max \bigg\{ \sum_{j \in J}\lambda_j\; : \; J \subset \{1,\dots,N \}, \ \sum_{j \in J}\lambda_j < \frac{1}{2}\bigg\}.
\end{equation}
Let $\mu \in \Med_{\boldsymbol{\lambda}}(\boldsymbol{\mu})$,  let us  first prove by contradiction that
\[W_1(\nu,\mu) \leq  \frac{2C\delta}{1-2\delta} + 2C.\]

In order to do so,  let $\boldsymbol{\mathcal{B}}=B_{2C}(\nu)$ be the ball with center $\nu$ and radius $2C$ with respect to the $W_1$ distance. Further, let 
$$\xi := \mathsf{Dist}(\mu,\boldsymbol{\mathcal{B}}) := \inf_{\rho \in \boldsymbol{\mathcal{B}}} W_1(\mu,\rho).$$
Then by the triangle inequality $W_1(\mu,\nu) \leq \xi + 2C$, so that for all $j=1,\dots,N$
\begin{equation}\label{eq:est-corr}
    W_1(\mu_j,\mu) \geq W_1(\mu_j,\nu) - W_1(\nu,\mu) \geq W_1(\mu_j,\nu) - (\xi + 2C).
\end{equation}
Now suppose by contradiction that $\xi > 2C\delta/(1-2\delta)$, which in particular implies that $W_1(\mu, \nu) > 2C$. 
Using the fact  that in ($\cP_1(\cX), W_1)$, line segments are  geodesics (recall \eqref{geodW1}),  defining for $j=1,\dots,N$ the interpolation $\nu_j^t := (1-t) \nu_j + t \mu$, $t \in [0,1]$, we have 
$$W_1(\nu_j,\mu) = W_1(\nu_j, \nu_j^t) + W_1(\nu_j^t,\mu) \text{ for } t \in [0,1].$$
Since $W_1(\nu,\nu_j^0)= W_1(\nu,\nu_j) \leq C$ and $W_1(\nu,\nu_j^1)=W_1(\nu, \mu)  > 2 C$, there exists $\bar{t}\in [0,1]$ such that $W_1(\nu,\nu_j^{\bar{t}}) = 2C$. In particular $W_1(\nu_j^{\bar{t}},\mu)\geq \xi$ 
and $W_1(\nu_j,\nu_j^{\bar{t}}) \geq  W_1(\nu,\nu_j^{\bar{t}})-W_1(\nu_j, \nu)= 2C - W_1(\nu_j, \nu) \geq W_1(\nu_j, \nu)$ so that
\begin{equation}\label{eq:est-noncorr}
    W_1(\nu_j,\mu) = W_1(\nu_j,\nu_j^{\bar{t}}) + W_1(\nu_j^{\bar{t}},\mu)    \geq W_1(\nu_j,\nu) + \xi. 
\end{equation}
Putting together \eqref{eq:est-corr} and \eqref{eq:est-noncorr} yields
\begin{align*}
\sum_{j=1}^N \lambda_j W_1(\mu_j,\mu) & \geq  \sum_{j \in I} \lambda_j(W_1(\mu_j,\nu) - (\xi + 2C)) + \sum_{j \notin I} \lambda_j(W_1(\mu_j,\nu)+ \xi) \\
& =  \sum_{j =1}^N \lambda_j W_1(\mu_j,\nu) + \xi (\sum_{j \notin I} \lambda_j - \sum_{j \in I} \lambda_j) - 2C \sum_{j \in I} \lambda_j \\
& \geq  \sum_{j =1}^N \lambda_j W_1(\mu_j,\nu) + \xi (1 - 2 \delta) - 2C\delta \\
& >  \sum_{j =1}^N \lambda_j W_1(\mu_j,\nu),
\end{align*}
which contradicts $\mu$ being a Wasserstein median for the corrupted collection $\boldsymbol{\mu}$. We thus have
\begin{equation}\label{inge6788}
\xi \leq \frac{2C\delta}{1-\delta} \mbox{ and }  W_1(\nu,\mu) \leq \xi+ 2 C \leq  \frac{2C\delta}{1-2\delta} + 2C,
\end{equation}
and  $b(\Med_{\boldsymbol{\lambda}}(\boldsymbol{\nu})) >\delta$, yielding $b(\Med_{\boldsymbol{\lambda}}(\boldsymbol{\nu}))\geq B$. To obtain the exact value of the breakdown point, take now $J \subset\{1,\dots,N\}$ with $\sum_{j \in J} \lambda_j \geq \frac{1}{2}$
and consider a sequence $(x_n)_{n \in \N}$ in $\cX$ such that $d(x_n,x_0) \rightarrow +\infty$ as $n \rightarrow \infty$. Then, by using the sequence of corrupted collections defined by $\boldsymbol{\mu}^n:=(\mu^n_1,\dots,\mu^n_N)$ with $\mu^n_j = \delta_{x_n}$ for $j \in J$ and $\mu^n_k = \nu_k$ for $k \notin J$ we have $\delta_{x_n} \in \Med_{\boldsymbol{\lambda}}(\boldsymbol{\mu}^n)$ as we have observed in Example \ref{rem:thresholdMed} and
\begin{equation*}
    W_1(\delta_{x_n},\nu) \geq d(x_n,x_0) - W_1(\delta_{x_0},\nu) \rightarrow + \infty \quad \text{as } n \rightarrow \infty,
\end{equation*}
implying $b(\Med_{\boldsymbol{\lambda}}(\boldsymbol{\nu}))\leq B$ and concluding the proof.
\end{proof}
Note that in the case of uniform weights, i.e.~with $\boldsymbol{\lambda}:=(1/N, \dots, 1/N)$, \eqref{eq:break_down_point} turns into the classical estimate $b(\Med_{\boldsymbol{\lambda}}(\boldsymbol{\nu})) = \left\lfloor{\tfrac{N+1}{2}}\right\rfloor/N$. Let us finally emphasize that the proof of Theorem \ref{thm:breakdownpointofwassersteinmedians} actually works for Fr\'echet medians on any geodesic metric space.

\section{One dimensional Wasserstein medians}\label{sec:one_dim}
	
	In this section, we study the case of Wasserstein medians on $\cX=\mathbb{R}$ with distance $d$ induced by the absolute value. Since the Wasserstein distance of order 1 is equal to the $L^1$ distance between cumulative or quantile distribution functions, the problem becomes more explicit. This will in particular enable us to  find  different explicit constructions of Wasserstein medians. In this section for all $\nu \in \cP_1(\mathbb{R})$ we denote by $F_\nu$ its associated cumulative distribution function (cdf), which is defined by $F_\nu(x)=\nu((-\infty,x])$ for all $x \in \mathbb{R}$. We also denote by $Q_\nu: [0,1]\to \bar{\R}$ its \textit{pseudo-inverse} or quantile distribution function (qdf), which is defined by
	\begin{equation*}
	Q_\nu(t):=\inf \{x \in \mathbb{R} \; : \;  \ F_\nu(x)\geq t \}.
	\end{equation*}
	Denoting by $\LL$ the Lebesgue measure on $[0,1]$, it is well-known that one recovers $\nu$ from its qdf $Q_\nu$ through ${Q_{\nu}}_{\#} \LL=\nu$, that is $Q_\nu$ is the \emph{monotone transport} between $\LL$ and $\nu$. We first recall that in one dimension, both maps $\nu\in  \mathcal{P}_1(\R) \mapsto F_\nu$ and  $\nu\in  \mathcal{P}_1(\R) \mapsto Q_\nu$ map isometrically, for the $L^1$ distance, the Wasserstein space  $(\mathcal{P}_1(\R), W_1)$ to the set of cdf's of probabilities in $ \mathcal{P}_1(\R)$ (i.e.~the set of nondecreasing, right-continuous functions $F:\R \to [0,1]$ such that $(1-F)\in L^1((0, +\infty))$, $F\in L^1((-\infty, 0))$, $F(+\infty)=1$ and $F(-\infty)=0$) and the set of qdf's (i.e.~the set of $L^1((0,1), \LL)$  non-decreasing left-continuous functions) respectively. More precisely, for $(\mu,\nu)\in \mathcal{P}_1(\mathbb{R})^2$ we have the following convenient expressions  for the $1$-Wasserstein distance between $\mu$ and $\nu$ (see Theorem 2.9 in \cite{santambrogio_optimal_2015}):
		\begin{align}
		\label{w1Q}
		W_1(\mu,\nu) &=\int_0^1 | Q_\nu(t)-Q_\mu(t) | \d t=\|Q_\nu-Q_\mu\|_{L^1([0,1])}\\
		 \label{w1F}
		&=\int_{\mathbb{R}} |F_\mu(x)-F_\nu(x)| \d x=\|F_\nu-F_\mu\|_{L^1(\R)}.
		\end{align}
	This enables us to reformulate  the Wasserstein median problem  as
	\begin{align}
	\min\eqref{eq:Medianprob}&=\min_{\nu \in \mathcal{P}_1(\mathbb{R})} \  \int_{\mathbb{R}} \sum_{i=1}^N \lambda_i |F_\nu(t)-F_{\nu_i}(t)| \d t \label{eq:one_dim_v}\\
	&=\min_{\nu \in \mathcal{P}_1(\mathbb{R})} \  \int_0^1 \sum_{i=1}^{N}\lambda_i |Q_\nu(t)-Q_{\nu_i}(t)| \d t,\label{eq:one_dim_h}
	\end{align}
	which will be referred as \textit{vertical} \eqref{eq:one_dim_v} and \textit{horizontal} \eqref{eq:one_dim_h} \textit{formulations}. The terminology will become clear in the sequel. Note that, in this way, the problem is equivalent to performing a proper selection of a  weighted median of all cumulative or quantile distribution functions, the lower and upper median maps $\Ml^+$ and $\Ml^-$ defined in \eqref{defm+-}  in Example \ref{1dmed} and their regularity properties will be particularly useful in this setting. 
		
	\begin{prop}\label{minimed1dqf}
	Let $\boldsymbol{\lambda} \in \Delta_N$, $\bnu:=(\nu_1, \dots, \nu_N) \in  \mathcal{P}_1(\R)^N$ and $\nu \in  \mathcal{P}_1(\R)$, then the following statements are equivalent
	\begin{enumerate}
	\item $\nu \in \Medl(\bnu)$,\label{item:minimed1dqf_1}
	\item $F_\nu(x) \in \Ml(F_{\nu_1}(x), \dots, F_{\nu_N}(x))$ for all $x\in \R$,\label{item:minimed1dqf_2}
	\item $Q_\nu(t) \in \Ml(Q_{\nu_1}(t), \dots, Q_{\nu_N}(t))$ for all $t\in [0,1]$.\label{item:minimed1dqf_3}
	\end{enumerate}
    In particular, if \eqref{eq:uniqueness_medians_1d} holds, there exists a unique Wasserstein median.
	\end{prop}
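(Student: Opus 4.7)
The strategy is to exploit the two $L^1$-isometric representations of $(\cP_1(\R), W_1)$: via cdfs (formula \eqref{w1F}), leading to the vertical formulation \eqref{eq:one_dim_v}, and via qdfs (formula \eqref{w1Q}), leading to the horizontal formulation \eqref{eq:one_dim_h}. In both cases, the integrand at a fixed abscissa is exactly the one-dimensional weighted median objective $a \mapsto \sum_i \lambda_i |a - y_i|$ from Example \ref{1dmed}, whose minimizers form the median interval $\Ml(y_1, \dots, y_N) = [\Ml^-, \Ml^+]$. The whole proposition therefore reduces to justifying pointwise minimization under the structural constraint that the selected function be a valid cdf (resp.\ a valid qdf).

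For $(1) \Leftrightarrow (2)$, I would set $G^\pm(x) := \Ml^\pm(F_{\nu_1}(x), \dots, F_{\nu_N}(x))$ and check that both are cdfs of some probability measures: monotonicity in $x$ follows from the coordinate-wise monotonicity of $\Ml^\pm$ recorded in Example \ref{1dmed}; the limits $0, 1$ at $\mp\infty$ and right-continuity follow from the Lipschitz estimate \eqref{variationmpm} applied to the family $(F_{\nu_i})$. Consequently $G^\pm$ realize the pointwise infimum of the integrand in \eqref{eq:one_dim_v} and belong to the admissible class, so a cdf $F_\nu$ minimizes \eqref{eq:one_dim_v} if and only if $F_\nu(x) \in \Ml(F_{\nu_1}(x), \dots, F_{\nu_N}(x))$ for almost every $x$. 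To upgrade this to every $x$, I would argue by contradiction: if membership fails at some $x_0$, say $F_\nu(x_0) > \Ml^+(F_{\nu_1}(x_0), \dots, F_{\nu_N}(x_0))$, then right-continuity of $F_\nu$ and of the $F_{\nu_i}$, combined with Lipschitz continuity of $\Ml^+$, forces the strict inequality to persist on a right-neighbourhood of $x_0$, producing a strictly positive contribution to the integral and contradicting optimality.

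The equivalence $(1) \Leftrightarrow (3)$ is entirely parallel, with \eqref{eq:one_dim_h} in place of \eqref{eq:one_dim_v}: the selections $t \mapsto \Ml^\pm(Q_{\nu_1}(t), \dots, Q_{\nu_N}(t))$ are nondecreasing and left-continuous on $[0,1]$, hence qdfs of probability measures, and the final localization step uses left-continuity in lieu of right-continuity. For the uniqueness assertion, condition \eqref{eq:uniqueness_medians_1d} forces $\Ml^-$ and $\Ml^+$ to coincide identically, so the median interval collapses to a point at every abscissa, which pins down $F_\nu$ (hence $\nu$) uniquely.

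The step requiring most care is the verification that $G^\pm$ are genuine cdfs (resp.\ qdfs), since this is what makes pointwise minimization compatible with the structural constraint, together with the localization upgrade from ``a.e.\ $x$'' to ``every $x$''. Everything else then follows routinely from the $L^1$-isometries and the elementary properties of $\Ml^\pm$ collected in Example \ref{1dmed}.
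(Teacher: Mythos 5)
Your proposal is correct and follows essentially the same route as the paper: reduce via the $L^1$-isometries \eqref{w1F} and \eqref{w1Q} to pointwise median selection of cdfs (resp.\ qdfs), then use right-continuity (resp.\ left-continuity) of the distribution functions together with continuity of $\Ml^\pm$ to promote the resulting a.e.\ membership to membership at every point. Your explicit check that $G^\pm$ are genuine cdfs (resp.\ qdfs) is a useful point of rigour that the paper leaves implicit in this proof (it is spelled out only after the proposition), since it is exactly what guarantees the pointwise minimum is attainable by an admissible distribution function and hence that the a.e.\ identity holds for any minimizer.
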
%
	\begin{proof}
	The fact that \ref{item:minimed1dqf_2} implies \ref{item:minimed1dqf_1} obviously follows from the definition of $\Ml$ and the expression in \eqref{w1F} for the Wasserstein distance. Assume now that $\nu \in \Medl(\bnu)$, then we should have for a.e.~$x\in \R$, 
	\begin{equation}\label{inegg12121}
	\Ml^-(F_{\nu_1}(x), \dots, F_{\nu_N}(x)) \le  F_\nu(x) \le \Ml^+(F_{\nu_1}(x), \dots, F_{\nu_N}(x)).
	\end{equation}
	Hence for $x\in \R$, there exists a sequence $\varepsilon_n >0$, $\varepsilon_n \to 0$ such that the previous inequality holds at $x+\varepsilon_n$,
	by the right continuity of $F_\nu$, $(F_{\nu_1}(\cdot), \dots, F_{\nu_N}(\cdot))$ at $x$ and the continuity of $\Ml^\pm$, we easily get that \eqref{inegg12121} actually holds at $x$ hence everywhere, proving the equivalence between \ref{item:minimed1dqf_1} and \ref{item:minimed1dqf_2}. The equivalence between \ref{item:minimed1dqf_1} and \ref{item:minimed1dqf_3} follows the same lines (using left-continuity of qdf's). 
	\end{proof}

This suggests to define
\[F^-(x):=\Ml^-(F_{\nu_1}(x), \dots, F_{\nu_N}(x)), \; F^+(x):=\Ml^+(F_{\nu_1}(x), \dots, F_{\nu_N}(x)),  \quad \text{for all} \ x\in \R,\]
as well as for $\theta \in [0,1]$, 
\begin{equation}\label{interpolF}
F_\theta(x):=(1-\theta) F^-(x)+ \theta F^+(x), \quad \text{for all} \ x\in \R.
\end{equation}
Thanks to  the properties of $\Ml^+$ and $\Ml^-$ we saw in Example \ref{1dmed} and the fact that the $F_{\nu_i}$'s are the cdf's of probability measures with finite first moments, $F^+$ and $F^-$ are also the cdf's of measures with  finite first moments and then so is $F_\theta$. Thanks to Proposition \ref{minimed1dqf}, $F_\theta$ is the cdf of $\nu^\theta$ which belongs to the set of Wasserstein medians $\Medl(\bnu)$, we call these measures $\nu^\theta$ vertical median selections:

\begin{defi}[\textbf{Vertical median selections}]
For every $\theta \in [0,1]$, the measure $\nu^\theta$ whose cdf is $F_\theta$ given by \eqref{interpolF} is called the vertical median selection of $\bnu$ with weights $\bl$ and interpolation parameter $\theta$ and simply denoted $\VMedl(\theta, \bnu)$.
\end{defi}

Let us also define
\[Q^-(t):=\Ml^-(Q_{\nu_1}(t), \dots, Q_{\nu_N}(t)), \; Q^+(t):=\Ml^+(Q_{\nu_1}(x), \dots, Q_{\nu_N}(t)), \; \ \text{for all} \  t\in (0,1),\]
as well as for $\theta \in [0,1]$, 
\begin{equation}\label{interpolQ}
Q_\theta(t):=(1-\theta) Q^-(t)+ \theta Q^+(t), \; \ \text{for all} \  t\in (0,1).
\end{equation}
It is easy to see that $Q_\theta$ is nondecreasing, left-continuous and in $L^1((0,1), \LL)$; it is therefore the qdf of a median $\mu^\theta \in \Medl(\bnu)$ which we call an horizontal median selection:

\begin{defi}[\textbf{Horizontal median selections}]
For every $\theta \in [0,1]$, the measure $\mu^\theta$ whose qdf is $Q_\theta$ given by \eqref{interpolF} is called the horizontal median selection of $\bnu$ with weights $\bl$ and interpolation parameter $\theta$ and simply denoted $\HMedl(\theta, \bnu)$.
\end{defi}

A first nice feature of both vertical and horizontal median selections is that it selects medians in a   Lipschitz  continuous way with respect to the sample measures:

\begin{lemma}\label{vhselarelip}
Let $\bl\in \Delta_N$,  $\bnu=(\nu_1, \ldots, \nu_N) \in  \mathcal{P}_1(\R)^N$, $\btnu=(\tnu_1, \ldots, \tnu_N)\in  \mathcal{P}_1(\R)^N$, $\theta\in [0,1]$ then 
\begin{equation}\label{vsellip}
W_1( \VMedl(\theta, \bnu) , \VMedl(\theta, \btnu)) \le \sum_{i=1}^N W_1(\nu_i, \tnu_i),
\end{equation}
and 
\begin{equation}\label{hsellip}
 W_1( \HMedl(\theta, \bnu) , \HMedl(\theta, \btnu))\le  \sum_{i=1}^N W_1(\nu_i, \tnu_i)  .
 \end{equation}
\end{lemma}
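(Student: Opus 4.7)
The plan is to reduce both inequalities to pointwise Lipschitz estimates on the maps $\Ml^\pm$ from Example \ref{1dmed}, and then integrate using the isometric identifications \eqref{w1F} and \eqref{w1Q} of the $1$-Wasserstein distance with an $L^1$ norm of cdf's (respectively qdf's).

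First I would treat the vertical case. By definition, $\VMedl(\theta,\bnu)$ is the measure with cdf
\[
F_\theta^{\bnu}(x) = (1-\theta)\Ml^-(F_{\nu_1}(x),\dots,F_{\nu_N}(x)) + \theta\, \Ml^+(F_{\nu_1}(x),\dots,F_{\nu_N}(x)),
\]
and similarly with $\btnu$ replacing $\bnu$. The global Lipschitz estimate \eqref{variationmpm} applied with $\bx=(F_{\nu_1}(x),\dots,F_{\nu_N}(x))$ and $\by=(F_{\tnu_1}(x),\dots,F_{\tnu_N}(x))$ gives pointwise
\[
\bigl|\Ml^\pm(F_{\nu_1}(x),\dots,F_{\nu_N}(x))-\Ml^\pm(F_{\tnu_1}(x),\dots,F_{\tnu_N}(x))\bigr| \le \max_{i} |F_{\nu_i}(x)-F_{\tnu_i}(x)| \le \sum_{i=1}^N |F_{\nu_i}(x)-F_{\tnu_i}(x)|.
\]
Convex-combining the $\Ml^-$ and $\Ml^+$ bounds with weights $1-\theta$ and $\theta$, the same bound holds for $|F_\theta^{\bnu}(x)-F_\theta^{\btnu}(x)|$. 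Integrating in $x$ and using \eqref{w1F} on both sides yields \eqref{vsellip}.

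The horizontal case is entirely analogous, using the qdf representation: $\HMedl(\theta,\bnu)$ has qdf $Q_\theta^{\bnu}(t) = (1-\theta)\Ml^-(Q_{\nu_1}(t),\dots,Q_{\nu_N}(t)) + \theta\,\Ml^+(Q_{\nu_1}(t),\dots,Q_{\nu_N}(t))$, and the same pointwise Lipschitz estimate on $\Ml^\pm$ applied to $Q_{\nu_i}(t)$ and $Q_{\tnu_i}(t)$ gives
\[
|Q_\theta^{\bnu}(t)-Q_\theta^{\btnu}(t)| \le \sum_{i=1}^N |Q_{\nu_i}(t)-Q_{\tnu_i}(t)|, \quad t\in(0,1).
\]
Integrating over $(0,1)$ with respect to $\LL$ and invoking \eqref{w1Q} gives \eqref{hsellip}.

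There is no substantial obstacle here: the whole content sits in the Lipschitz property \eqref{variationmpm} of the univariate lower/upper median selectors, which was already recorded in Example \ref{1dmed}. The only minor point to be careful about is the convex combination of the $\Ml^-$ and $\Ml^+$ bounds, but since $\theta,1-\theta\ge 0$ and both selectors obey the same pointwise estimate with the common right-hand side $\sum_i|F_{\nu_i}-F_{\tnu_i}|$ (resp.\ $\sum_i|Q_{\nu_i}-Q_{\tnu_i}|$), this causes no issue.
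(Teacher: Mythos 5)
Your proof is correct and follows essentially the same route as the paper's: both apply the Lipschitz estimate \eqref{variationmpm} to $\Ml^\pm$ pointwise in the cdf (resp.\ qdf) argument and then integrate via \eqref{w1F} (resp.\ \eqref{w1Q}). The only cosmetic difference is that you convex-combine the $\Ml^-$ and $\Ml^+$ bounds before integrating, whereas the paper first settles the cases $\theta\in\{0,1\}$ and then invokes the triangle inequality for the $L^1$ norm to cover $\theta\in(0,1)$; these are interchangeable steps.
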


\begin{proof}
From \eqref{variationmpm}, we have for every $x\in \R$:
\[\begin{split} \vert \Ml^\pm (F_{\nu_1}(x), \dots, F_{\nu_N}(x))- \Ml^\pm (F_{\tnu_1}(x), \dots, F_{\tnu_N}(x)) \vert \\ \leq \max_{i=1,\ldots, N} \vert F_{\nu_i}(x)- F_{\tnu_i}(x)\vert 
 \leq \sum_{i=1}^{N} \vert F_{\nu_i}(x)- F_{\tnu_i}(x)\vert.
 \end{split}\]
Integrating and recalling the cdf expression \eqref{w1F} for the Wasserstein distance, we readily get \eqref{vsellip} for $\theta=0$ and $\theta=1$, the general case $\theta\in [0,1]$ follows by the triangle inequality. The proof of  \eqref{hsellip} is similar using the expression of $W_1$  in terms of quantiles as in \eqref{w1Q}.%
\end{proof}

One may wonder whether some medians inherit properties of the sample measures  and in particular whether samples consisting of probabilities with an $L^p$ density with respect to the Lebesgue measure have medians with the same property. As we will shortly see, vertical and horizontal medians will enable us to answer these questions by the positive.

\begin{lemma}
Let $\bl\in \Delta_N$,  $\bnu=(\nu_1, \ldots, \nu_N)\in  \mathcal{P}_1(\R)^N$, $\theta\in [0,1]$, and $\nu^\theta:=\VMedl(\theta, \bnu)$,  $\mu^\theta:=\HMedl(\theta, \bnu)$, then 
\begin{enumerate}
\item if $\nu_1, \ldots, \nu_N$ are atomless, then so are $\nu^\theta$ and $\mu^\theta$,
\item if $\nu_1, \ldots, \nu_N$ have connected supports, then so does $\mu^\theta$.
\end{enumerate}
\end{lemma}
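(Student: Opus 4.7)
The plan is to reduce both claims to the standard characterizations of atomless measures and measures with connected support in terms of cumulative and quantile distribution functions: for $\mu \in \cP_1(\R)$, $\mu$ is atomless iff $F_\mu$ is continuous iff every level set of $Q_\mu$ has zero Lebesgue measure (equivalently, $Q_\mu$ is not constant on any nontrivial interval); and $\text{supp}(\mu)$ is connected iff $Q_\mu$ is continuous on $(0,1)$, since jumps of $Q_\mu$ correspond exactly to gaps in the support of $\mu$.

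The atomlessness of $\nu^\theta = \VMedl(\theta, \bnu)$ is the easy part: its cdf is $F_\theta(x) = (1-\theta)F^-(x) + \theta F^+(x)$ with $F^\pm(x) = \Ml^\pm(F_{\nu_1}(x), \dots, F_{\nu_N}(x))$; by hypothesis each $F_{\nu_i}$ is continuous, and by the Lipschitz estimate \eqref{variationmpm} the maps $\Ml^\pm$ are continuous, so both $F^\pm$ and hence $F_\theta$ are continuous, giving $\nu^\theta$ atomless. The connectedness of $\text{supp}(\mu^\theta)$ follows by a parallel argument on the quantile side: if $\text{supp}(\nu_i)$ is connected, then $Q_{\nu_i}$ has no jumps on $(0,1)$, hence is continuous there; by Lipschitz continuity of $\Ml^\pm$ both $Q^\pm$ are continuous on $(0,1)$, so $Q_\theta = (1-\theta) Q^- + \theta Q^+$ is continuous on $(0,1)$, and therefore $\mu^\theta$ has connected support.

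The atomlessness of $\mu^\theta = \HMedl(\theta, \bnu)$ is the main obstacle, since mere continuity of $\Ml^\pm$ does not prevent $Q_\theta$ from having flat pieces. The plan is to argue by contradiction: suppose $Q_\theta \equiv c$ on some interval $(t_1, t_2)$ with $t_1 < t_2$. Since $Q^-$ and $Q^+$ are both nondecreasing and their convex combination is constant on $(t_1, t_2)$, for $\theta \in (0,1)$ both $Q^-$ and $Q^+$ must be constant on $(t_1,t_2)$, while for $\theta \in \{0,1\}$ the corresponding single map is constant; in every case at least one of $Q^\pm$ equals $c$ throughout $(t_1, t_2)$. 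Invoking the attainment property \eqref{attainmentpm}, for each such $t$ there exists $i(t) \in I_\pm(Q_{\nu_1}(t),\dots,Q_{\nu_N}(t))$ with $Q_{\nu_{i(t)}}(t) = c$; since the index set $\{1,\dots,N\}$ is finite, a pigeonhole argument yields some $i^\star$ such that $\{t \in (t_1,t_2) : i(t)=i^\star\}$ has positive Lebesgue measure. As $Q_{\nu_{i^\star}}$ is nondecreasing, its level set $\{t : Q_{\nu_{i^\star}}(t)=c\}$ is an interval; having positive measure it is nontrivial, so $Q_{\nu_{i^\star}}$ is constant on a subinterval of positive length, which contradicts the atomlessness of $\nu_{i^\star}$. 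The only subtlety here is the measurable-selection of $i(\cdot)$, but since $\{1,\dots,N\}$ is finite one can simply take $i(t)$ to be the smallest index attaining the maximum/minimum, which is trivially measurable.
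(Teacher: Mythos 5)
Your proof is correct. Parts (1) for $\nu^\theta$ and (2) for $\mu^\theta$ follow the same route as the paper — continuity of $F_\theta$ (resp. $Q_\theta$) inherited from the $F_{\nu_i}$'s (resp. $Q_{\nu_i}$'s) via the Lipschitz continuity of $\Ml^\pm$. For the atomlessness of $\mu^\theta$, however, you take a genuinely different route from the paper. The paper applies \eqref{variationmpm} directly to obtain the quantitative lower bound
\[
Q_\theta(t)-Q_\theta(s) \;\geq\; \min_{1\leq i \leq N}\bigl(Q_{\nu_i}(t)-Q_{\nu_i}(s)\bigr) \;>\;0 \qquad \text{for } 0<s<t<1,
\]
from which strict monotonicity of $Q_\theta$ is immediate, whenever each $Q_{\nu_i}$ is strictly increasing. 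You instead argue by contradiction and pigeonhole through the attainment property \eqref{attainmentpm}: a flat piece of $Q_\theta$ forces a flat piece of some $Q^\pm$, and since the median value is always attained by one of the coordinates, one of the $Q_{\nu_i}$ must then be constant on a set of positive measure, contradicting atomlessness. Both arguments exploit the same structural fact (the one-dimensional median lies in the sample), but the paper packages it as a Lipschitz-type two-sided estimate while you extract it via attainment; your route is a bit longer but arguably more elementary, since it avoids invoking translation invariance and monotonicity jointly in the form of \eqref{variationmpm}. Two cosmetic remarks: the constant value of $Q^-$ (or $Q^+$) on $(t_1,t_2)$ need not equal $c$ unless $\theta\in\{0,1\}$, though this does not affect the argument; and the measurable-selection worry is superfluous — one can simply set $A_i:=\{t\in(t_1,t_2): Q_{\nu_i}(t)=c^-\}$, note each $A_i$ is an interval (level set of a monotone function) and $\bigcup_i A_i=(t_1,t_2)$, so some $A_{i^\star}$ is a nontrivial interval, with no selection needed at all.
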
%
\begin{proof}
Recall that for a probability measure, being  atomless is equivalent to having a continuous cdf as well as to having a strictly increasing qdf, see, e.g., Proposition 1 in \cite{Embrechts2013}. Let us denote by $F_\theta$ the cdf (see \eqref{interpolF}) of $\nu^\theta$ and by $Q_\theta$ the qdf of $\mu^\theta$ (see \eqref{interpolQ}). If  $\nu_1, \ldots, \nu_N$ are atomless  then $F_\theta$ is continuous by continuity of $F_{\nu_i}$ so that  $\nu^\theta$ is atomless. On the other hand, \eqref{variationmpm} entails
\[Q_\theta(t)-Q_\theta(s) \geq \min_{1 \leq i \leq N} (Q_{\nu_i}(t)-Q_{\nu_i}(s)), \; \text{for all} \ (t,s)\in (0,1)^2,\]
so that $Q_\theta$ is strictly increasing whenever each $Q_{\nu_i}$ is.  Let us assume now that $\nu_1, \ldots, \nu_N$ have connected supports then each $Q_{\nu_i}$ is continuous and so is $Q_\theta$. Thus, $\mu^\theta$ has a connected support (again by Proposition 1 in \cite{Embrechts2013}). 
\end{proof}

	\begin{figure}[!t]
    \centering
	\begin{subfigure}{0.35\textwidth}
	\centering
	\includegraphics[width=1\textwidth]{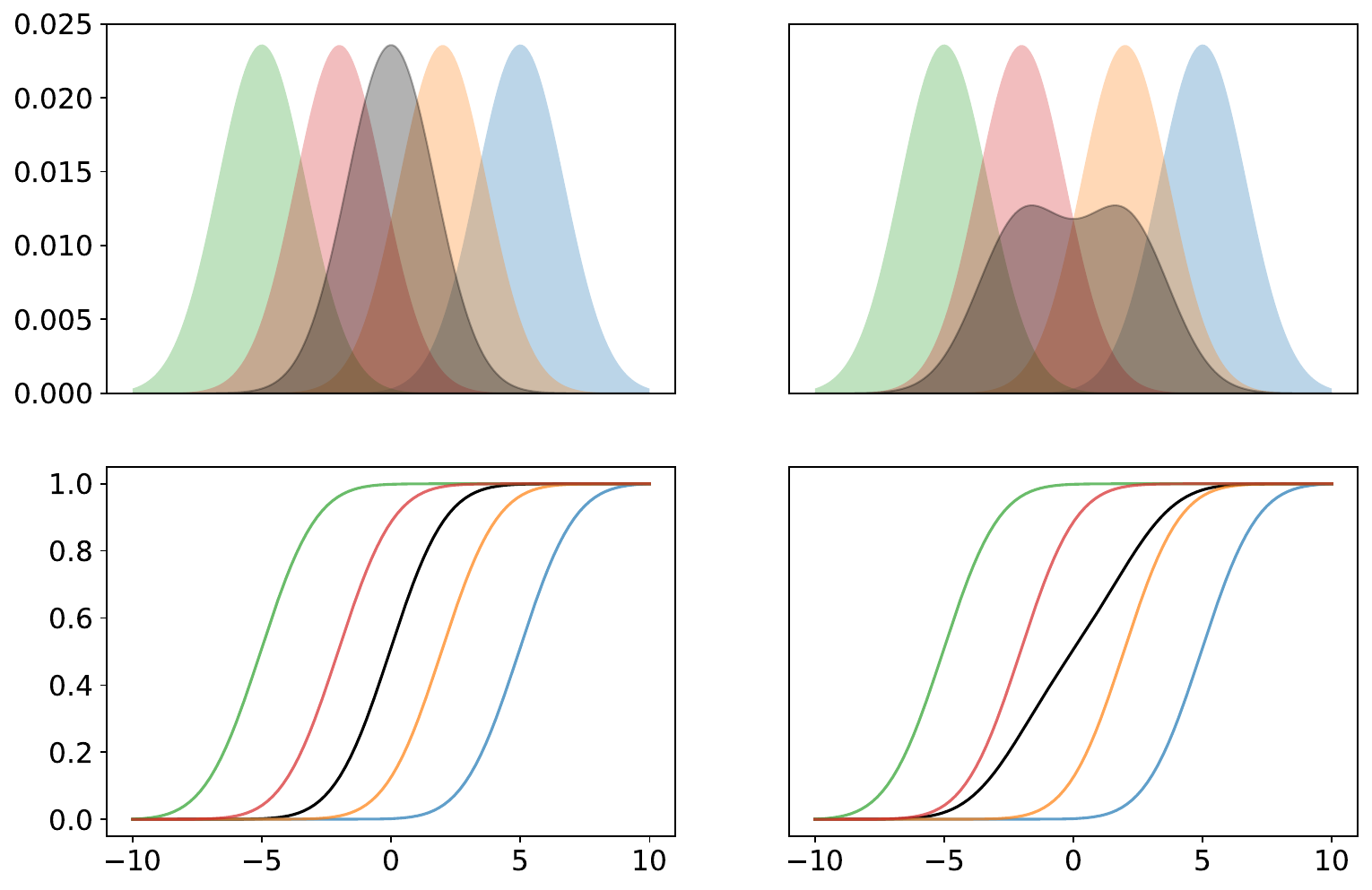}
	\caption{Horizontal selection compared to the vertical selection with $\theta=1/2$.}
	\end{subfigure}
	\hspace{0.5cm}
	\begin{subfigure}{0.46\textwidth}
		\centering
		\includegraphics[width=1\linewidth]{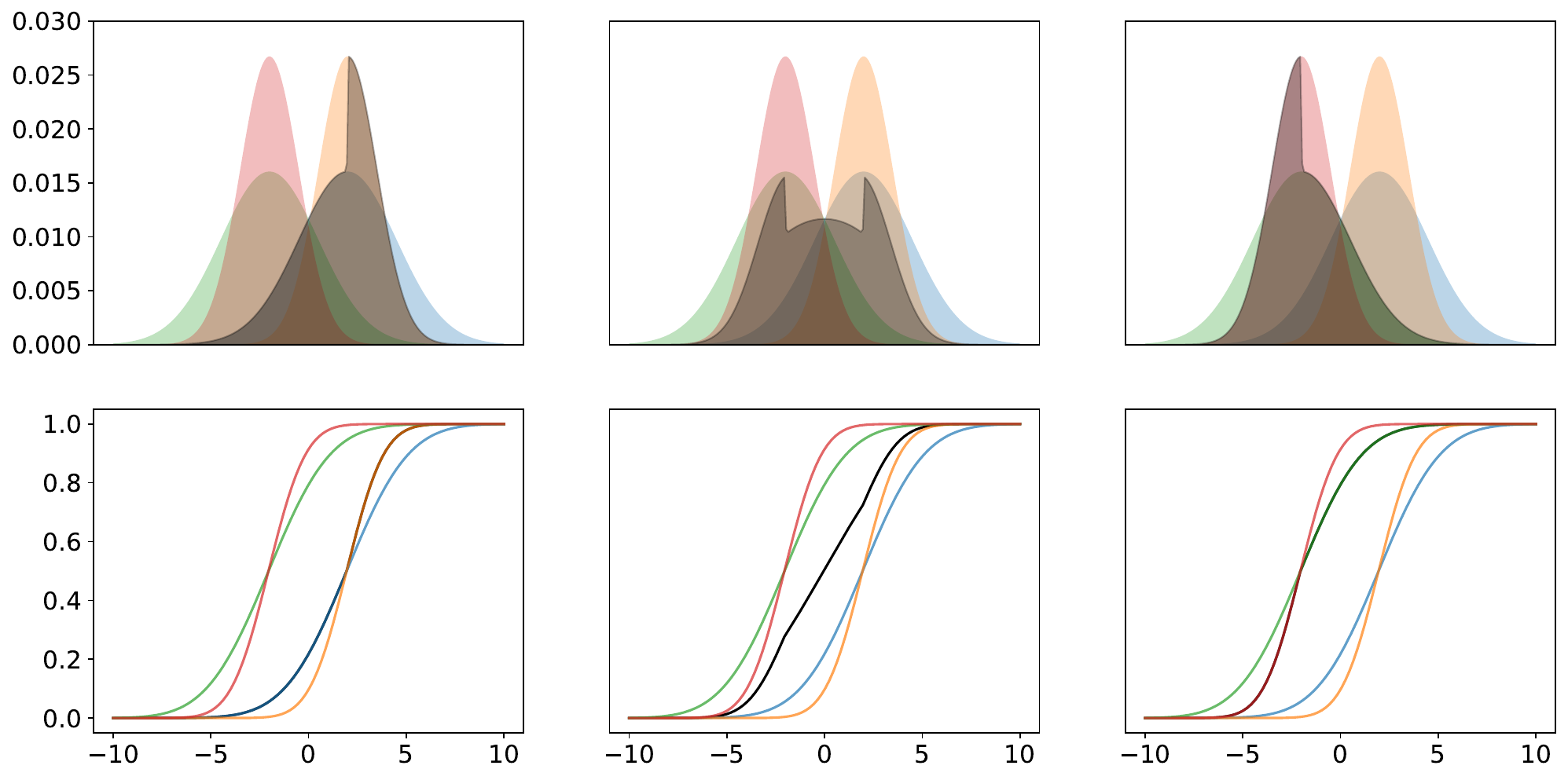}
	\caption{Three vertical selections for $\theta=0, \ 1/2, \ 1$.\\ \textcolor{white}{.}}
	\end{subfigure}
	\caption{Comparison between different Wasserstein median selections. In the second line we displayed the corresponding cumulative distribution functions.}
	\label{fig:intro}
    \end{figure}

Considering absolute continuity of medians, we first discuss the easier case of vertical selections:
	\begin{theorem}[\textbf{Vertical selections: absolute continuity}]\label{MainTheoremOneDimensionalVertical}
	Let $\bl\in \Delta_N$,  $\bnu=(\nu_1, \ldots, \nu_N)\in  \mathcal{P}_1(\R)^N$, $\theta\in [0,1]$, and $\nu^\theta:=\VMedl(\theta, \bnu)$. If $\nu_1,\dots,\nu_N$ are all absolutely continuous (with respect to the Lebesgue measure on $\R$) with densities $f_1,\dots,f_N\in L^1(\R)$  then $\nu^\theta$ is absolutely continuous with a density $f_{\nu^\theta} \in L^1(\R)$ which satisfies
			\begin{equation}\label{quant1}
			\min_{1 \leq i \leq N} f_i \le f_{\nu^\theta}  \leq \max_{1 \leq i \leq N} f_i, \quad \text{a.e.~on $\R$}.
			\end{equation}
	In particular, if,  for some $p\in [1, \infty]$, $f_i\in L^p(\R)$ for  $i=1, \dots, N$, then $f_{\nu^\theta} \in L^p(\R)$ and
	\begin{equation}\label{eq:1d_lp_bound_ver}
	 \| \min_{1 \leq i \leq N} f_i\|_{L^p(\R)}    \le \| f_{\nu^\theta}\|_{L^p(\R)} \leq \| \max_{1 \leq i \leq N} f_i\|_{L^p(\R)}\le \sum_{i=1}^N  \|f_i\|_{L^p(\R)}.
	\end{equation}
	\end{theorem}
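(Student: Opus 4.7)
The plan is to prove that $F^-$ and $F^+$ are absolutely continuous on $\R$, identify their densities via Lebesgue differentiation combined with the Lipschitz estimate \eqref{variationmpm}, and then read off the conclusion from the convex combination $F_\theta = (1-\theta)F^- + \theta F^+$.

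First, absolute continuity of $F^\pm$ on $\R$. Applying \eqref{variationmpm} with $\bx$ and $\by$ equal to the vectors of values of the $F_{\nu_i}$ at points $a_k < b_k$ and using monotonicity of the $F_{\nu_i}$ yields
\[
|F^\pm(b_k) - F^\pm(a_k)| \;\le\; \max_{i=1,\dots,N}(F_{\nu_i}(b_k) - F_{\nu_i}(a_k)) \;\le\; \sum_{i=1}^N (F_{\nu_i}(b_k) - F_{\nu_i}(a_k)).
\]
Since each $F_{\nu_i}$ is absolutely continuous (its density $f_i$ lies in $L^1(\R)$), summing over finite disjoint collections of intervals immediately gives absolute continuity of $F^\pm$ on $\R$. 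Hence $F^\pm$ is the cdf of an absolutely continuous probability measure and its density $f^\pm := (F^\pm)'$ exists a.e., belongs to $L^1(\R)$, and satisfies $F^\pm(x) = \int_{-\infty}^x f^\pm$.

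Second, the pointwise bounds on $f^\pm$. At any common Lebesgue point $x$ of all the $f_i$ at which $F^\pm$ is also differentiable (which covers a.e.~$x$), apply \eqref{variationmpm} with $y = x+h$, $h > 0$, to obtain
\[
\min_{1 \le i \le N} \frac{F_{\nu_i}(x+h) - F_{\nu_i}(x)}{h} \;\le\; \frac{F^\pm(x+h) - F^\pm(x)}{h} \;\le\; \max_{1 \le i \le N} \frac{F_{\nu_i}(x+h) - F_{\nu_i}(x)}{h}.
\]
Letting $h \to 0^+$, the $i$-th difference quotient tends to $f_i(x)$ by Lebesgue differentiation, and the min/max of finitely many convergent sequences converge to the min/max of their limits, giving $\min_i f_i(x) \le f^\pm(x) \le \max_i f_i(x)$. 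Writing $f_{\nu^\theta} = (1-\theta) f^- + \theta f^+$ for the a.e.~derivative of $F_\theta$, the same sandwich \eqref{quant1} follows by convex combination.

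Finally, the $L^p$ estimate \eqref{eq:1d_lp_bound_ver} is immediate: monotonicity of $\|\cdot\|_{L^p}$ on nonnegative functions applied to \eqref{quant1} gives the outer inequalities, and $\max_i f_i \le \sum_i f_i$ (all $f_i \ge 0$) combined with the triangle inequality controls $\|\max_i f_i\|_{L^p}$ by $\sum_i \|f_i\|_{L^p}$. The main (mild) subtlety is the passage from a pointwise Lipschitz estimate in the sample cdf's to a pointwise estimate on the densities; this is exactly where Lebesgue differentiation together with finiteness of $N$ is used, and no recourse to the refined inequality \eqref{variationmpmrefined} is needed at this stage.
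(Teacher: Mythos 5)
Your proof is correct and takes essentially the same approach as the paper, resting on the Lipschitz estimate \eqref{variationmpm} applied to the sample cdf's. The paper argues slightly more compactly by chaining $F_\theta(x+h)-F_\theta(x)\le \max_i\int_x^{x+h}f_i\le \int_x^{x+h}\max_i f_i$, which establishes absolute continuity and the pointwise density bound in a single step; you separate the argument into abstract absolute continuity via summation over disjoint intervals followed by Lebesgue differentiation, which is an equivalent (if slightly longer) formalization of the same idea.
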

\begin{proof}
Let $x\in \R$ and $h\geq 0$, it follows from \eqref{variationmpm} and the definition of the cdf $F_\theta$ that
\[ 0\leq F_\theta(x+h)-F_\theta(x)\leq \max_{1 \leq i \leq N} \{ F_{\nu_i}(x+h)-F_{\nu_i}(x)\} =  \max_{1 \leq i \leq N} \int_{x}^{x+h} f_i \leq  \int_{x}^{x+h}  \max_{1 \leq i \leq N} f_i,\]
which yields absolute continuity of $F_\theta$, i.e.~$\nu^\theta$ is absolutely continuous with respect to Lebesgue's measure, and the upper bound in \eqref{quant1}. In a similar fashion,
\[F_\theta(x+h)-F_\theta(x) \geq \int_{x}^{x+h} \min_{1 \leq i \leq N} f_i,\]
which shows shows the lower bound in \eqref{quant1} and concludes the proof.
\end{proof}
\noindent In particular, in dimension one, vertical medians  automatically select medians which inherit integrability properties of the sample measures, with simple explicit pointwise bounds. 

Let us now turn our attention to the case of horizontal selections which is slightly more involved.

	\begin{theorem}[\textbf{Horizontal selections: absolute continuity}] \label{MainTheoremOneDimensionalHorizontal}
Let $\bl\in \Delta_N$,  $\bnu=(\nu_1, \ldots, \nu_N)\in  \mathcal{P}_1(\R)^N$, $\theta\in [0,1]$, and $\mu^\theta:=\HMedl(\theta, \bnu)$. If $\nu_1,\dots,\nu_N$ are all absolutely continuous (with respect to the Lebesgue measure on $\R$) with densities $f_1,\dots,f_N\in L^1(\R)$  then:
\begin{itemize}
\item

 $\mu^0$ and $\mu^1$ are  absolutely continuous with densities $f_{\mu^0}$, $f_{\mu^1}$ which satisfy
			\begin{equation}\label{quanth1}
			\min_{1 \leq i \leq N} f_i \le  \min(f_{\mu^0}, f_{\mu^1}) \leq  \max(f_{\mu^0}, f_{\mu^1}) \leq \max_{1 \leq i \leq N} f_i, \ \text{ a.e.~on $\R$},
			\end{equation}

\item for every $\theta\in [0,1]$, 	$\mu^\theta$ is  absolutely continuous, we denote its density $f_{\mu^\theta}$,		
\item if,  for some $p\in [1, \infty]$, $f_i\in L^p(\R)$ for $i=1, \dots, N$, then $f_{\mu^\theta} \in L^p(\R)$ and
	\begin{equation}\label{eq:1d_lp_bound_hor}
	 \| f_{\mu^\theta}\|_{L^p(\R)} \leq \| \max_{1 \leq i \leq N} f_i\|_{L^p(\R)}\le \sum_{i=1}^N  \|f_i\|_{L^p(\R)}.
	\end{equation}
	
\end{itemize}	
	\end{theorem}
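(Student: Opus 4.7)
The plan is to establish the three assertions in sequence, treating the endpoints $\theta\in\{0,1\}$ via a measurable partition decomposition, then handling general $\theta$ by viewing $Q_\theta$ as a convex interpolation of qdf's on which a convexity inequality for $s\mapsto s^{1-p}$ yields the $L^p$ bound.

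For the endpoints I focus on $\mu^0$ (the case $\mu^1$ is symmetric). Choose a measurable selector $i^-:(0,1)\to\{1,\dots,N\}$ with $i^-(t)\in I_-(t)$, so that $Q^-(t)=Q_{\nu_{i^-(t)}}(t)$, and set $A_i^-:=\{t: i^-(t)=i\}$; these form a measurable partition of $(0,1)$, and
\[
\mu^0 = (Q^-)_\#\LL_{(0,1)} = \sum_{i=1}^N (Q_{\nu_i})_\#\bigl(\LL_{(0,1)}|_{A_i^-}\bigr).
\]
Each summand is dominated by $\nu_i$, hence absolutely continuous with density $g_i\le f_i$; this already shows $\mu^0\ll\LL$. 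A short argument shows $Q^-$ is strictly increasing (otherwise some $Q_{\nu_i}$ would be constant on a subinterval of positive measure, contradicting the strict monotonicity of $Q_{\nu_i}$ inherited from $\nu_i$ being atomless), so the images $Q_{\nu_i}(A_i^-)$ are pairwise disjoint and at most one $g_i$ is nonzero at each $x$. The refined variation inequality \eqref{variationmpmrefined} applied to $Q^-=\Ml^-(Q_{\nu_1},\dots,Q_{\nu_N})$ then gives $\min_{i\in I_-(t)}Q_{\nu_i}'(t)\le (Q^-)'(t)\le \max_{i\in I_-(t)}Q_{\nu_i}'(t)$ at a.e.~$t$; combined with $Q_{\nu_i}'(t)=1/f_i(Q^-(t))$ and $f_{\mu^0}(x)=1/(Q^-)'(t(x))$, this yields $\min_{i\in I_-(t(x))}f_i(x)\le f_{\mu^0}(x)\le \max_{i\in I_-(t(x))}f_i(x)$ at a.e.~$x\in\spt(\mu^0)$, which is \eqref{quanth1}.

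For general $\theta\in(0,1)$, since $\mu^0,\mu^1$ are atomless, both $Q^-$ and $Q^+$ are strictly increasing, so $Q_\theta=(1-\theta)Q^-+\theta Q^+$ is strictly increasing and $\mu^\theta$ atomless. At a.e.~$t$, $Q_\theta'(t)=(1-\theta)(Q^-)'(t)+\theta(Q^+)'(t)$ exists and is positive and finite (using the finiteness and positivity of $f_{\mu^0},f_{\mu^1}$ on their supports from the endpoint step). Applying the change of variables on each maximal open subinterval on which $Q_\theta$ is continuous shows $\mu^\theta\ll\LL$ with $f_{\mu^\theta}(Q_\theta(t))=1/Q_\theta'(t)$, and the $L^p$ estimate becomes the convexity inequality
\[
\int_\R f_{\mu^\theta}^p\,\d x = \int_0^1 Q_\theta'(t)^{1-p}\,\d t \le (1-\theta)\int_0^1 (Q^-)'(t)^{1-p}\,\d t + \theta\int_0^1 (Q^+)'(t)^{1-p}\,\d t = (1-\theta)\|f_{\mu^0}\|_{L^p}^p + \theta\|f_{\mu^1}\|_{L^p}^p,
\]
valid for $p>1$ by convexity of $s\mapsto s^{1-p}$ on $(0,\infty)$, and bounded by $\|\max_i f_i\|_{L^p}^p$ using the endpoint bounds. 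The case $p=1$ is trivial, while $p=\infty$ follows since $(Q^\pm)'\ge 1/\|\max_i f_i\|_\infty$ a.e.\ gives $Q_\theta'\ge 1/\|\max_i f_i\|_\infty$ and hence $f_{\mu^\theta}\le \|\max_i f_i\|_\infty$ a.e.

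The main obstacle is that $Q^\pm$ need not be absolutely continuous as functions of $t$, because gaps in $\spt(\nu_i)$ produce jumps of $Q_{\nu_i}$ which propagate to $Q_\theta$. Consequently the change-of-variables formula must be applied locally on each open subinterval of continuity of $Q_\theta$; the at most countable jump set creates gaps in $\Img(Q_\theta)=\spt(\mu^\theta)$ but contributes nothing to the density or to the $L^p$ computation. A secondary delicate point in the endpoint step is verifying the lower bound $\min_i f_i(x)\le f_{\mu^0}(x)$ at points $x$ outside $\spt(\mu^0)$, which reduces to checking that $\{\min_i f_i>0\}\subseteq\spt(\mu^0)$ up to a Lebesgue-null set.
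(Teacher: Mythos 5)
Your proposal takes a genuinely different route from the paper, and has real gaps. The paper proceeds in three steps: it first imposes the extra regularity hypothesis $f_i\in L^\infty$, $1/f_i\in L^\infty_{\mathrm{loc}}$, which forces each $Q_{\nu_i}$, and hence $Q^\pm$ and $Q_\theta$, to be locally Lipschitz with Lipschitz inverse, so that the change-of-variable identities $f_i(Q_{\nu_i})Q_{\nu_i}'=1$, $f_{\mu^\theta}(Q_\theta)Q_\theta'=1$ hold unproblematically; it then proves the bound \eqref{quanth1} and the displacement-convexity estimate for $\theta\in(0,1)$ under that hypothesis; finally it removes the hypothesis by truncating and mollifying the densities, passing to the limit using the Lipschitz stability of horizontal selections (Lemma~\ref{vhselarelip}) together with weak $L^p$ compactness. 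You instead try to argue directly for arbitrary $L^1$ densities, using a measurable selector $i^-$ and the partition $(A_i^-)_i$ of $(0,1)$, and localizing the change of variables away from the jump set of $Q_\theta$. That is an attractive idea, but the key analytical ingredients are invoked rather than proved, and they are exactly where the trouble lies.

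Specifically, two steps are not established. First, the identities $Q_{\nu_i}'(t)=1/f_i(Q_{\nu_i}(t))$ and $f_{\mu^0}(Q^-(t))=1/(Q^-)'(t)$ for a.e.~$t$ are not automatic when $Q_{\nu_i}$, $Q^-$ are merely strictly increasing with a.e.\ finite derivative: a strictly increasing function pushing $\LL$ onto an absolutely continuous measure can have a nontrivial singular continuous part (e.g.\ $t\mapsto t+\text{Cantor}(t)$ pushes $\LL$ onto an absolutely continuous measure with density in $[0,1]$), so restricting to ``open subintervals of continuity of $Q_\theta$'' does not reduce to an absolutely continuous setting. One needs an a.e.\ chain-rule/area-formula argument for monotone maps, which you assume but do not supply, whereas the paper avoids it entirely by first making everything Lipschitz. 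Second, and you acknowledge this yourself, the lower bound $\min_i f_i\le f_{\mu^0}$ must hold a.e.\ on all of $\R$, including on the ``gaps'' of $\spt\mu^0$ created by jumps of $Q^-$; your reduction ``it suffices that $\{\min_i f_i>0\}\subseteq\spt\mu^0$ up to a null set'' is exactly what must be proved, and no argument is given. The paper sidesteps this too: under the extra hypothesis each $f_i$ is locally bounded below, so the issue does not arise, and in the approximation step the a.e.\ lower bound survives because it passes through strong $L^p$ convergence of $\min_i f_i^\eps$ against weak $L^p$ convergence of $f_{\mu^0_\eps}$. To make your direct proof complete, you would need to settle both points; as written, it identifies the difficulties but does not resolve them.
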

\begin{proof}
We shall proceed in three steps.

{\textbf{Step 1:}} Let us show \eqref{quanth1}, under the extra assumption that each $f_i$ satisfies
\begin{equation}\label{extra}
f_i \in L^{\infty}(\R), \; \frac{1}{f_i} \in L^{\infty}_\loc(\R).
\end{equation}
Recall that by construction
\[\mu^0:=Q^-_{\#} \LL, \; \mu^1:=Q^+_{\#} \LL \mbox{ with } Q^\pm:=\Ml^\pm(Q_{\nu_1}, \dots, Q_{\nu_N}),\]
and \eqref{extra} ensures that each $F_{\nu_i}$ is Lipschitz (with Lipschitz constant $\Vert f_i\Vert_{L^\infty(\R)}$) with inverse $Q_{\nu_i}$, which is locally Lipschitz on $(0,1)$, with $Q_{\nu_i}'$ satisfying 
\begin{equation}\label{easymongeampere0}
 f_i(Q_{\nu_i}) Q_{\nu_i}'  =1, \; \mbox{hence} \ Q_{\nu_i}' \geq \frac{1}{M} \mbox{ with } M:= \max_j  \Vert f_j\Vert_{L^\infty(\R)} \mbox{ a.e.~on $(0,1)$}.
 \end{equation}
Hence, $Q^\pm$ are locally Lipschitz on $(0,1)$, and it follows from \eqref{variationmpm} that for $ 0< t< s <1$, one has
\[Q^\pm(s)-Q^\pm(t) \ge \frac{1}{M} (s-t),\]
which implies that $Q^-=Q_{\mu^0}$ and $Q^+=Q_{\mu^1}$ have $M$-Lipschitz inverses which are the cdf's $F_{\mu^0}$ and $F_{\mu^1}$. Thus, $\mu^0$ and $\mu^1$ are absolutely continuous with bounded positive densities $f_{\mu^0}$,  $f_{\mu^1}$, and 
\begin{equation}\label{easymongeampere}
f_{\mu^0} (Q_{\mu^0}) Q_{\mu^0}'  =1, \; f_{\mu^1} (Q_{\mu^1}) Q_{\mu^1}' =1 \mbox{ a.e.~on $(0,1)$}.
\end{equation}
Using \eqref{variationmpmrefined}, we also have for  $ 0< t < s < 1$ with $|t-s|$ small enough
\[ \max_{i \in I_-(t) } (Q_{\nu_i}(s)-Q_{\nu_i}(t) )\ge Q_{\mu^0}(s)-Q_{\mu^0}(t) \geq \min_{i \in I_-(t) } (Q_{\nu_i}(s)-Q_{\nu_i}(t)),\]
 where  $I_-(t):=\{i \; : \;  Q^-(t)=Q_{\nu_i}(t)\}$. If we choose $t$ a point where all qdf's $Q_{\mu^0}$, \ $Q_{\nu_i}$ are differentiable and the change of variable formulas \eqref{easymongeampere0} and \eqref{easymongeampere} hold dividing the previous inequality by $(s-t)$ and letting $s\to t^+$ yields
 \[\begin{split}
 \max_{i \in I_-(t) } Q_{\nu_i}'(t)=\max_{i \in I_-(t)} \frac{1}{f_i(Q_{\mu^0}(t))}   \ge Q'_{\mu^0}(t)=\frac{1}{ f_{\mu^0}(Q_{\mu^0}(t))} \\
 \geq \min_{i \in I_-(t) } Q_{\nu_i}'(t)=\min_{i \in I_-(t) } \frac{1}{f_i(Q_{\mu^0}(t)),}
 \end{split}\]
  so that
 \[ \min_{ 1\leq i \leq N} f_i ( Q_{\mu^0}(t)) \le
 f_{\mu^0} ( Q_{\mu^0}(t)) \leq \max_{ 1\leq i \leq N} f_i ( Q_{\mu^0}(t))  \mbox{ for a.e.~$t\in(0,1)$}.\]
But since $\mu^0= {Q_{\mu^0}}_{\#} \LL$ has a positive and bounded density, it has the same null sets as $\LL$ hence the previous inequality can be simply reformulated as 
\[\min_{ 1\leq i \leq N} f_i  \le  f_{\mu^0}  \leq \max_{ 1\leq i \leq N} f_i   \mbox{ a.e.~on $\R$}.\]
The fact that $f_{\mu^1}$ obeys the same inequality can be proved in a similar way using \eqref{variationmpmrefined} for $\Ml^+$ instead of $\Ml^-$, we thus have shown \eqref{quanth1} under \eqref{extra}. Note also that the $L^p$ bound \eqref{eq:1d_lp_bound_hor} follows for $\theta\in\{0,1\}$. 
\smallskip

\textbf{Step 2:} Again assuming \eqref{extra}, let us  show absolute continuity of $\mu^\theta$ and the $L^p$ bound \eqref{eq:1d_lp_bound_hor} for $\theta\in (0,1)$. We shall proceed by a displacement convexity argument which is reminiscent of McCann's seminal work \cite{mccannconvexity}. Let us recall that $F_{\mu^0}$ is Lipschitz with locally Lipschitz inverse $Q^-$ so that ${F_{\mu^0}}_\# \mu^0=\LL$ and  
\[\mu^\theta=((1-\theta) Q^-+ \theta Q^+)_\# \LL= ((1-\theta) Q^-+ \theta Q^+)_\# ({F_{\mu^0}}_\# \mu^0)  =((1-\theta) \id+ \theta T)_\#\mu^0,\]
where $T:=Q^+ \circ F_{\mu^0}$ is the monotone transport from $\mu^0$ to $\mu^1$ so that $\mu^\theta$ is the \emph{displacement interpolation} between $\mu^0$ and $\mu^1$ as defined by McCann in \cite{mccannconvexity} (in the more general and involved multi-dimensional setting). Since the (locally Lipschitz) map $(1-\theta) \id + \theta T$ has a Lipschitz inverse and $\mu^0$ is absolutely continuous, $\mu^\theta$ is absolutely continuous, we then denote by $f_{\mu^\theta}$ its density.  Recalling that the qdf of $\mu^\theta$, $Q_\theta= (1-\theta) Q^-+ Q^+$ is locally Lipschitz, differentiable with a strictly positive derivative a.e.~and we have the change of variable formula:
\[f_{\mu^\theta}(Q_\theta)  Q_\theta'=1 \mbox{ a.e.}\]
Let $V:\R \to \R$ with $V(0)=0$ be convex, then the function $\alpha>0 \mapsto \Phi(\alpha) := \alpha V(\alpha^{-1})$ is convex as well and then we have
\[\begin{split} \int_{\R} V(f_{\mu^\theta}(x))\d x&= \int_{0}^1 V \left(\frac{1}{{Q_{\theta}}'(t)} \right) {Q_{\theta}}'(t)  \d t =\int_0^1 \Phi ((1-\theta) Q_{\mu^0}'(t)+ \theta Q_{\mu^1}'(t))) \d t\\
&\leq (1-\theta) \int_0^1  \Phi (Q_{\mu^0}'(t)) \d t + \theta \int_0^1  \Phi (Q_{\mu^1}'(t)) \d t\\
&= (1-\theta) \int_\R  V(f_{\mu^0}(x)) \d x + \theta \int_\R  V (f_{\mu^1}(x)) \d x.
\end{split}\]
Taking $V(\alpha)=\vert \alpha\vert^p$, recalling \eqref{quanth1} we in particular get
\[\int_{\R} (f_{\mu^\theta}(x))^p\d x \leq (1-\theta) \int_{\R} (f_{\mu^0}(x))^p\d x + \theta\int_{\R} (f_{\mu^1}(x))^p\d x \leq \Vert \max_{1\leq i \leq N}  f_i \Vert_{L^p(\R)}^p,\]
which gives  \eqref{eq:1d_lp_bound_hor}.

\smallskip 
\textbf{Step 3}: general case by Lemma \ref{vhselarelip}.  To get rid of the extra assumption \eqref{extra}, let $g$ be the density of a standard Gaussian measure and for $\eps>0$ set
\[f_i^\eps := \frac{   \min ((1-\eps)  f_i +\eps g, \eps^{-1})} {\int_\R   \min( (1-\eps)  f_i +\eps g, \eps^{-1}) }.\]
Applying the previous steps to $\mu_\eps^\theta:=\HMedl(\theta, f_1^\eps, \dots, f_N^\eps)$, we get
\[ \min_{1 \leq i \leq N} f_i^\eps \le  \min(f_{\mu_\eps^0}, f_{\mu_\eps^1}) \leq  \max(f_{\mu_\eps^0}, f_{\mu_\eps^1}) \leq \max_{1 \leq i \leq N} f_i^\eps,\]
and for every $\theta\in [0,1]$, 
\[ \| f_{\mu_\eps^\theta}\|_{L^p(\R)} \leq \| \max_{1 \leq i \leq N} f_i^\eps\|_{L^p(\R)}.\]

 Since $f_i^\eps$ converges to $f_i$ in $L^p$ and  $\mu_\eps^\theta$  converges to $\mu^\theta$ in Wasserstein distance  thanks to Lemma \ref{vhselarelip}, we can pass to the limit $\eps\to 0^+$ in these bounds, obtaining \eqref{quanth1} and  \eqref{eq:1d_lp_bound_hor}.
\end{proof}

	\begin{remark}
	For Wasserstein barycenters (in any dimension), the fact that one sample measure with positive weight is $L^p$ implies that  the barycenter is $L^p$ as well (see \cite{agueh_barycenters_2011}). For Wasserstein medians in one dimension, we really need  all sample measures to be $L^p$ to find an $L^p$ median. To see this, recall that the median of $\nu_1:=\delta_{x}$ with weight $2/3$ and any probability $\nu_2\in \cP_1(\R)$  (with the smoothest density one can think of) with weight $1/3$ is  $\delta_{x}$.  Note also that due to the fact that $\Ml^{\pm}$ are Lipschitz but nonsmooth, vertical and horizontal median selections of sample measures with smooth (or Sobolev) densities do not have a continuous density in general.  
	\end{remark}

\section{Multi-marginal and dual formulations}\label{sec:dual_multimarginal}
 
\subsection{Multi-marginal formulation} 
Given the proper metric space $(\cX, d)$, $\bl \in \Delta_N$ and $\bnu=(\nu_1, \dots, \nu_N)\in \cP_1(\cX)^N$, the Wasserstein median problem  \eqref{eq:Medianprob} is, like the Wasserstein barycenter problem, a special instance of the \emph{matching for teams} problem \cite{carlier_matching_2010} and, as such, admits linear reformulations which take the form of multi-marginal optimal transport problems. Let us now recall this reformulation in the Wasserstein median context. For $\bx:=(x, x_1, \dots, x_N)\in \cX^{N+1}$, let us define:
\begin{equation}
f_{\bl} (x, x_1, \dots, x_N):=\sum_{i=1}^N \lambda_i d(x_i, x), \; c_{\bl}(x_1, \dots, x_N):=\min_{y\in \cX} f_{\bl} (y, x_1, \dots, x_N),
\end{equation}
and the projections:
\[\pi_0(\bx)=x, \; \pi_j(\bx)=x_j, \; 1\leq j \leq N, \; \pi_{0,j} (\bx)=(x, x_j), \; \pi_{1, \dots, N} (\bx)=(x_1, \dots, x_N).\]
We denote by $\Pi(\nu_1,\dots,\nu_N)$ the set of Borel probability measures on $\cX^N$ having $\nu_i$ as $i$-th marginal and the linear multi-marginal problems
\begin{equation}\label{eq:MultMargwMed}
\inf \left\{  \int_{\cX^{N+1}} f_{\bl} \d \theta\; : \; \theta \in \cP_1(X^{N+1}),  \; {{\pi_{1,\dots, N}}_{\#} \theta}\in  \Pi(\nu_1,\dots,\nu_N) \right\},
\end{equation}
and 
\begin{equation}\label{eq:MultMargInfForm}
\inf_{\gamma \in \Pi(\nu_1,\dots,\nu_N)}   \int_{\cX^{N}} c_{\bl} \d \gamma.
\end{equation}
Since $(\cX, d)$ is  Polish, it follows from the disintegration theorem (see paragraph 5.3 in \cite{ambrosio_gradient_2005}) that if $\theta$ is admissible for \eqref{eq:MultMargwMed} it can be disintegrated with respect to its marginal 
$\gamma:={{\pi_{1,\dots, N}}_{\#} \theta}$ as 
\[\theta= \theta^{x_1, \dots, x_N} \otimes \gamma,\]
for a Borel family of conditional probability measures $\theta^{x_1, \dots, x_N}$ on $\cX$. For fixed  marginal  $\gamma:={{\pi_{1,\dots, N}}_{\#} \theta}$, minimizing with respect to the conditional probability $\theta^{x_1, \dots, x_N}$ the integral of $f_{\bl}$  obviously amounts to choose it supported on $\Ml(x_1, \dots, x_N)$ so that it is easy to see that \eqref{eq:MultMargwMed} and \eqref{eq:MultMargInfForm} are equivalent in the sense that they have the same value and that $\theta$ solves \eqref{eq:MultMargwMed} if and only if $\gamma:={\pi_{1,\dots, N}}_{\#} \theta$ solves  \eqref{eq:MultMargInfForm} and $\theta$ is supported by the set of $(x, x_1, \dots, x_N)$ such that $x\in \Ml(x_1, \dots, x_N)$. The fact that $\Pi(\nu_1, \dots, \nu_N)$ is tight and the properness of $(\cX,d)$ ensure that the infimum in both \eqref{eq:MultMargwMed} and \eqref{eq:MultMargInfForm} is attained.  The connection with the Wasserstein median problem  \eqref{eq:Medianprob} and its solutions $\Medl(\bnu)$ is summarized by:

\begin{theorem}\label{multimarginal} 
	The following  hold:
		\begin{enumerate}
		\item $\min\eqref{eq:Medianprob} = \min\eqref{eq:MultMargwMed} = \min\eqref{eq:MultMargInfForm}$.\label{item:multimarginal_1}
	\item If $\nu\in \Medl(\bnu)$ i.e.~$\nu$ solves $\eqref{eq:Medianprob}$, then, there exists $\theta$  solving $\eqref{eq:MultMargwMed}$ such that $\nu={\pi_0}_\# \theta$ and, conversely, if $\theta$ solves \eqref{eq:MultMargwMed}, then ${\pi_0}_{\#}\theta \in \Medl(\bnu)$. \label{item:multimarginal_2}
		\item If $\theta$  solves $\eqref{eq:MultMargwMed}$ and $\nu={\pi_0}_\# \theta$, then for every $j$ such that $\lambda_j>0$, $\gamma_j :={\pi_{0,j}}_\# \theta$ is an optimal transport plan between $\nu$ and $\nu_j$. \label{item:multimarginal_3}
		\item \label{item:median_push} If $\mathsf{m}_{\lambda}:\cX^N\to\cX$ is a Borel selection of $\Ml$  and  $\gamma$ solves \eqref{eq:MultMargInfForm}, then   ${(\mathsf{m}_{\boldsymbol{\lambda}})}_\#\gamma\in \Medl(\bnu)$.\label{item:multimarginal_4}
		\item  $\nu\in \Medl(\bnu)$ if and only if there exists $\gamma$ solving \eqref{eq:MultMargInfForm} and a Borel family of probability measures $\theta^{x_1, \dots, x_N}$  such that  $\theta^{x_1, \dots, x_N}$  is supported on $\Ml(x_1,\dots, x_N)$ for $\gamma$-a.e.~$(x_1, \dots, x_N)$ and $\nu= {\pi_0}_\# (\theta^{x_1, \dots, x_N}\otimes \gamma)$. \label{item:multimarginal_5}
		
	\end{enumerate}
\end{theorem}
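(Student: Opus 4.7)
The plan is to establish item~\ref{item:multimarginal_1} through a pair of inequalities, after which items~\ref{item:multimarginal_2}--\ref{item:multimarginal_5} will follow by tracking when each inequality is saturated. The inequality $\min\eqref{eq:MultMargwMed}\leq\min\eqref{eq:Medianprob}$ comes from a gluing construction: given $\nu\in\cP_1(\cX)$ and $W_1$-optimal plans $\gamma_i\in\Pi(\nu,\nu_i)$, I would invoke the iterated gluing lemma, which in the Polish setting is a consequence of the disintegration theorem (see Lemma~5.3.2 of \cite{ambrosio_gradient_2005}), to produce $\theta\in\cP_1(\cX^{N+1})$ with ${\pi_{0,i}}_\#\theta=\gamma_i$ for every $i$. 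Then ${\pi_0}_\#\theta=\nu$ and ${\pi_{1,\ldots,N}}_\#\theta\in\Pi(\nu_1,\ldots,\nu_N)$, so $\theta$ is admissible for \eqref{eq:MultMargwMed} with $\int f_\bl\,\d\theta=\mathcal{F}_{\bl,\bnu}(\nu)$; taking the infimum over $\nu$ yields the bound. Conversely, given an admissible $\theta$, setting $\nu:={\pi_0}_\#\theta\in\cP_1(\cX)$, each ${\pi_{0,i}}_\#\theta$ is a (possibly suboptimal) transport plan between $\nu$ and $\nu_i$, so
\[W_1(\nu,\nu_i)\leq \int_{\cX^{N+1}} d(x_i,x)\,\d\theta,\]
and weighted summation gives $\mathcal{F}_{\bl,\bnu}(\nu)\leq\int f_\bl\,\d\theta$, whence $\min\eqref{eq:Medianprob}\leq\min\eqref{eq:MultMargwMed}$. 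Finally, $\min\eqref{eq:MultMargwMed}=\min\eqref{eq:MultMargInfForm}$ is precisely the disintegration argument already sketched in the preamble, which uses $c_\bl(x_1,\ldots,x_N)=\min_y f_\bl(y,x_1,\ldots,x_N)$ to replace the conditional $\theta^{x_1,\ldots,x_N}$ by any probability supported on $\Ml(x_1,\ldots,x_N)$.

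With item~\ref{item:multimarginal_1} in hand, the remaining items reduce to equality-case bookkeeping. For item~\ref{item:multimarginal_2}, the gluing above applied to any $\nu\in\Medl(\bnu)$ yields an optimal $\theta$; conversely, if $\theta$ is optimal then $\mathcal{F}_{\bl,\bnu}({\pi_0}_\#\theta)\leq\int f_\bl\,\d\theta=\min\eqref{eq:Medianprob}$ forces ${\pi_0}_\#\theta\in\Medl(\bnu)$. Item~\ref{item:multimarginal_3} follows by saturation: with $\nu={\pi_0}_\#\theta$ a median and $\gamma_j={\pi_{0,j}}_\#\theta$, the chain
\[\min\eqref{eq:Medianprob}=\int f_\bl\,\d\theta=\sum_{i=1}^N \lambda_i\int d(x_i,x)\,\d\gamma_i\geq\mathcal{F}_{\bl,\bnu}(\nu)=\min\eqref{eq:Medianprob}\]
is tight term by term, forcing $\gamma_j$ to be $W_1$-optimal whenever $\lambda_j>0$. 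For item~\ref{item:multimarginal_4}, given a Borel selection $\mathsf{m}_{\bl}$ and an optimal $\gamma$ for \eqref{eq:MultMargInfForm}, the pushforward $\theta$ of $\gamma$ under the Borel map $(x_1,\ldots,x_N)\mapsto(\mathsf{m}_\bl(x_1,\ldots,x_N),x_1,\ldots,x_N)$ is admissible for \eqref{eq:MultMargwMed} with cost $\int c_\bl\,\d\gamma=\min\eqref{eq:MultMargwMed}$, hence optimal, and item~\ref{item:multimarginal_2} gives ${\pi_0}_\#\theta=(\mathsf{m}_\bl)_\#\gamma\in\Medl(\bnu)$. Item~\ref{item:multimarginal_5} is the synthesis: the forward direction uses item~\ref{item:multimarginal_2} to produce an optimal $\theta$, then disintegrates $\theta=\theta^{x_1,\ldots,x_N}\otimes\gamma$, with the preamble's argument showing that $\theta^{x_1,\ldots,x_N}$ must sit on $\Ml(x_1,\ldots,x_N)$ for $\gamma$-a.e.~$(x_1,\ldots,x_N)$; the reverse is a direct check that any such $\theta^{x_1,\ldots,x_N}\otimes\gamma$ is admissible with cost $\int c_\bl\,\d\gamma=\min\eqref{eq:MultMargwMed}$, hence optimal, followed by one more application of item~\ref{item:multimarginal_2}.

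The only genuinely delicate step is the iterated gluing at the outset: standard gluing only joins two plans along a single shared marginal, so I would proceed recursively, at each stage using Polish disintegration to attach the next $\gamma_{i+1}$ along the shared $\nu$-coordinate to the plan built so far. Everything downstream is cost-accounting around the two fundamental inequalities, together with an appeal to the disintegration identity linking \eqref{eq:MultMargwMed} and \eqref{eq:MultMargInfForm}.
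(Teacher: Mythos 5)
Your proof is correct and follows essentially the route the paper points to: the paper defers the details to \cite{carlier_matching_2010} but sketches the same two inequalities and the same gluing construction, and the remaining items are the same equality-case bookkeeping you carry out. One small streamlining worth noting: rather than iterating two-marginal gluings, the paper disintegrates each optimal plan as $\gamma_i=\nu\otimes\gamma_i^x$ and sets $\theta:=\nu(\d x)\otimes\gamma_1^x(\d x_1)\otimes\cdots\otimes\gamma_N^x(\d x_N)$, which glues all $N$ plans along the common $\nu$-coordinate in a single stroke and makes ${\pi_{0,i}}_\#\theta=\gamma_i$ immediate.
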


The proof of similar results can be found in \cite{carlier_matching_2010} and therefore omitted. Even though \cite{carlier_matching_2010} consider  a compact setting (with general costs), the same proof easily adapts to the present setting of a proper metric space with the distance as cost. Note that, as in \cite{carlier_matching_2010}, one can deduce from a Wasserstein median $\nu \in \Medl(\bnu)$ a solution of \eqref{eq:MultMargwMed} with first marginal $\nu$ as follows: let  $\gamma_i$ be an optimal plan between $\nu$ and $\nu_i$ disintegrated with respect to $\nu$ as $\gamma_i =\nu \otimes \gamma_i^x$ and define $\theta$ by gluing i.e.:
\[\int_{\cX} \phi(x, x_1, \dots, x_N) \d \theta(x, x_1, \ldots, x_N):=\int_{\cX} \Big( \int_{\cX^N} \phi(x, x_1, \dots, x_N)  \d \gamma_1^x(x_1) \dots  \d \gamma_N^x(x_N)  \Big) \d  \nu(x)\]
for every $\phi \in C_b(\cX^{N+1})$. Then, $\theta$ solves \eqref{eq:MultMargwMed} and by construction ${\pi_0}_\# \theta=\nu$. Note that pushing forward by a selection of $\Ml$ a solution of the multi-marginal \eqref{eq:MultMargInfForm} as in \ref{item:multimarginal_4} above corresponds to special medians for which, using the notation of \ref{item:multimarginal_5}, $\theta^{x_1,\dots, x_N}=\delta_{\mathsf{m}_{\lambda}(x_1, \ldots, x_N)}$ is a Dirac mass. Since $\Ml$ is in general not single-valued, not all medians are of this form. Consider for instance $\mathcal{X}=[-1,1]$ equipped with the usual Euclidean distance and let $\nu_1=\delta_{-1/2}$ and $\nu_2=\delta_{1/2}$ with uniform weights. Then $\Medl(\nu_1,\nu_2)$ is the set of all probability measures supported on $[-1/2,1/2]$ whereas \ref{item:multimarginal_4} only selects Dirac masses.

Let us now give an application  of Theorem \ref{multimarginal}:
\begin{cor}[\textbf{Moment bounds}]\label{pMoments}
Let $\cX=\mathbb{R}^d$ be equipped with the Euclidean distance. If all the sample measures  are supported on a closed convex subset $\mathcal{K}\subset \mathbb{R}^d$, then every Wasserstein median $\nu\in \Medl(\bnu)$ is supported on $\mathcal{K}$ as well. Moreover, if $V:\R^d \to \R_+$ is quasiconvex (i.e.~$\{V \leq t\}$ is convex for every $t\ge 0$) then
	\[ \int_{\R^d} V \d \nu \leq \sum_{i=1}^N \int_{\R^d}  V \d \nu_i.\]
	In particular, for any $p\in (0, +\infty)$ we have the following bound on the $p$-moments of $\nu$:
	\begin{equation}\label{PMomentsBound}
		\int_{\mathbb{R}^d} |x|^p \d\nu\leq \sum_{i=1}^{N}\int_{\mathbb{R}^d} |x|^p \d \nu_i.
	\end{equation}
\end{cor}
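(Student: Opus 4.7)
The plan is to combine the multi-marginal representation from Theorem \ref{multimarginal} (item \ref{item:multimarginal_5}) with the convex-hull containment $\Ml(x_1,\dots,x_N) \subset \mathrm{co}\{x_1,\dots,x_N\}$ established in Example \ref{Torricelli}. Given any Wasserstein median $\nu \in \Medl(\bnu)$, Theorem \ref{multimarginal} yields an optimal multi-marginal plan $\gamma \in \Pi(\nu_1,\dots,\nu_N)$ and a Borel family of probabilities $\theta^{x_1,\dots,x_N}$, each supported on $\Ml(x_1,\dots,x_N)$, such that
\[
\nu = {\pi_0}_\#(\theta^{x_1,\dots,x_N}\otimes \gamma).
\]

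For the first claim, since each $\nu_i$ is supported on $\mathcal{K}$, the marginal constraints force $\gamma$ to be concentrated on $\mathcal{K}^N$. For $\gamma$-a.e.~$(x_1,\dots,x_N) \in \mathcal{K}^N$, convexity of $\mathcal{K}$ and Example \ref{Torricelli} give $\Ml(x_1,\dots,x_N) \subset \mathrm{co}\{x_1,\dots,x_N\} \subset \mathcal{K}$, hence $\theta^{x_1,\dots,x_N}$ charges only $\mathcal{K}$; pushing forward via $\pi_0$ confirms $\mathrm{spt}(\nu)\subset \mathcal{K}$.

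For the quasiconvex inequality, the key observation is that for any $y \in \Ml(x_1,\dots,x_N)$, writing $t:=\max_i V(x_i)$, the sublevel set $\{V\le t\}$ is convex and contains each $x_i$, hence also contains the convex hull and in particular $y$; thus $V(y) \le \max_i V(x_i) \le \sum_{i=1}^N V(x_i)$ using $V\ge 0$. Integrating the change-of-variables expression
\[
\int_{\R^d} V\d\nu = \int_{\cX^N}\!\int_{\Ml(x_1,\dots,x_N)} V(y)\,\d\theta^{x_1,\dots,x_N}(y)\,\d\gamma(x_1,\dots,x_N)
\]
against this pointwise bound and using that $\gamma$ has marginals $\nu_i$ yields $\int V\d\nu \le \sum_{i=1}^N \int V\d\nu_i$. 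The third claim follows by applying this to $V(x) = |x|^p$, whose sublevel sets are Euclidean balls and hence convex for every $p>0$, so $V$ is quasiconvex.

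There is no real obstacle; the only mildly subtle point is recognizing that quasiconvexity alone (not convexity) suffices to control $V$ on $\Ml(x_1,\dots,x_N)$ via its values at the $x_i$'s, since that set lies in the convex hull. Everything else is a direct disintegration argument on top of Theorem \ref{multimarginal}.
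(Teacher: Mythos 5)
Your proof is correct and takes essentially the same approach as the paper: both invoke the multi-marginal disintegration of a median from Theorem \ref{multimarginal}, use the containment $\Ml(x_1,\dots,x_N)\subset\mathrm{co}\{x_1,\dots,x_N\}$ from Example \ref{Torricelli}, and bound $V$ on the convex hull by $\max_i V(x_i)$ via the sublevel-set characterization of quasiconvexity before integrating. (The paper cites item~\ref{item:multimarginal_4} for this, but the disintegration it actually uses is the one stated in item~\ref{item:multimarginal_5}, which is the one you cite.)
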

\begin{proof}
We know from point \ref{item:multimarginal_4} of Theorem \ref{multimarginal}, that there exists $\gamma\in \Pi(\nu_1, \dots \nu_N)$ and a family of probability $\theta^{x_1, \dots, x_N}$ supported by $\Ml(x_1, \dots, x_N)$ such that for every continuous and bounded (or more generally Borel) function $f:\R^d \to \R$ one has
\[\int_{\R^d} f(x) \d \nu(x) =\int_{(\R^d)^N} \Big(  \int_{\R^d}  f(x) \d \theta^{x_1, \dots, x_N}(x) \Big) \d \gamma(x_1, \dots, x_N),\]
but since $\theta^{x_1, \dots, x_N}$  is supported by $\Ml(x_1, \dots, x_N)\subset \mathrm{co} \{x_1, \ldots, x_N\}$ (as we have seen in Example \ref{Torricelli}), if all the $\nu_i$'s are supported by the closed convex set $\mathcal{K}$ then so is $\theta^{x_1, \dots, x_N}$ for $\gamma$-a.e.~$(x_1, \dots, x_N)$ and then $\nu(\mathcal{K})=1$. Likewise, if $V$ is nonnegative and quasiconvex, then for $\theta^{x_1, \dots, x_N}$ a.e.~$x$ we have
\[V(x) \leq \max_{1\leq i \leq N } V(x_i) \le \sum_{i=1}^N V(x_i),\]
and integrating this inequality with respect to $\theta^{x_1, \dots, x_N}$ first and then with respect to $\gamma$ in $\Pi(\nu_1, \dots, \nu_N)$ we obtain the announced moment bounds. 
\end{proof}

\paragraph{A counterexample to linear $L^{\infty}$ density bounds in several dimensions.} We have seen in Theorems \ref{MainTheoremOneDimensionalVertical} and \ref{MainTheoremOneDimensionalHorizontal}  that  when $\cX=\R$, and the sample measures have densities uniformly bounded by some $M$,  vertical and horizontal median selections enable to find Wasserstein medians with a density which is bounded by the same bound $M$. In other words, in dimension one,  it is possible to have a linear control on the $L^\infty$ norm of some well-chosen Wasserstein median in terms of $L^\infty$ bounds of the sample measures. The situation seems to be  more intricate in higher dimensions. The following example shows that a linear $L^\infty$-bound cannot hold in two dimensions.

\begin{example}\label{ex:counterex-reg}

For $0<\epsilon<1$ let $\nu_1$ be a uniform measure supported on the rectangle $[-1-\ell, -1]\times[-\tfrac{\epsilon}{2}, \tfrac{\epsilon}{2}]$ and let $\nu_2$, $\nu_3$ and $\nu_4$ be obtained by successive rotations by $90^\circ$ of $\nu_1$ as in Figure \ref{fig:counterex}. Consider uniform weights $\lambda_i=\tfrac{1}{4}$, for $i=1, \dots, 4$, and let $\nu \in \Medl(\nu_1, \dots, \nu_4)$. We know from Theorem \ref{multimarginal} that one can write $\nu:={\pi_0}_\# \theta$ where ${\pi_i}_\# \theta=\nu_i$ for $i=1, \ldots, 4$ and $x$ is a geometric median of $(x_1,\dots,x_4)$ for $\theta$-a.e.~$(x,x_1,\dots,x_4)$. Now note that, with this construction, four points $x_i \in \spt  \; \nu_i$ always form a convex quadrilateral, and as shown in Theorem 1 in \cite{plastria_four-point_2006},  their unique median is the intersection of the two segments $[x_1,x_3]$ and $[x_2,x_4]$. In particular such geometric medians belong to the square $[-\frac{\epsilon}{2},\frac{\epsilon}{2}]^2,$  which therefore supports any $\nu \in \Medl(\nu_1, \dots, \nu_4)$. This shows that the $L^\infty$ norm of $\nu$ is at least $\eps^{-2}$: it cannot be bounded from above uniformly in $\eps$  by a multiple of  $ \max_{i=1,\dots, 4} \Vert \nu_i \Vert_{L^\infty}=\ell^{-1}\eps^{-1}$.

 \begin{figure}[!t]
    \centering
    \begin{tikzpicture}[scale=0.7]
    \filldraw[fill=gray!20] (0, 4) rectangle (1, 1);
    \filldraw[fill=gray!20] (-4, 0) rectangle (-1, -1);
    \filldraw[fill=gray!20] (2, 0) rectangle (5, -1);
    \filldraw[fill=gray!20] (0, -2) rectangle (1, -5);
    \filldraw[fill=gray!20] (0, -2) rectangle (1, -5);

    \filldraw[fill=green!20] (0, 0) rectangle (1, -1);

    \draw [dashed] (0, 1) -- (0, -2);
    \draw [dashed] (1, 1) -- (1, -2);
    \draw [dashed] (-1, 0) -- (2, 0);
    \draw [dashed] (-1, -1) -- (2, -1);

    \draw [decorate, decoration = {brace, raise=5pt, amplitude=5pt}] (5.2, 0) --  (5.2, -1);
    \draw [decorate, decoration = {brace, raise=5pt, amplitude=5pt}] (2, 0.2) --  (5, 0.2);

    \node[text width=1cm] at (4.1, 1.1) {$\ell$};
    \node[text width=1cm] at (6.6, -0.5) {$\epsilon$};

    \node[text width=1cm] at (-2, -0.5) {$\nu_1$};
    \node[text width=1cm] at (4, -0.5) {$\nu_3$};
    \node[text width=1cm] at (0.95, 2.5) {$\nu_2$};
    \node[text width=1cm] at (0.95, -3.5) {$\nu_4$};

    \matrix [above left] at (11.5, -4.5) {
      \node[fill=gray!20, label=right:{Sample measures}] {};\\
      \node[fill=green!20, label=right:{Maximal support of Wasserstein medians}] {};\\
    };
   
\end{tikzpicture}
    \caption{Counterexample to linear $L^\infty$ bounds in dimension two. The support of the four given uniformly distributed measures are indicated in gray. The support of any Wasserstein median is contained in the green area. Confer Example \ref{ex:counterex-reg} for further details.}
    \label{fig:counterex}
\end{figure}
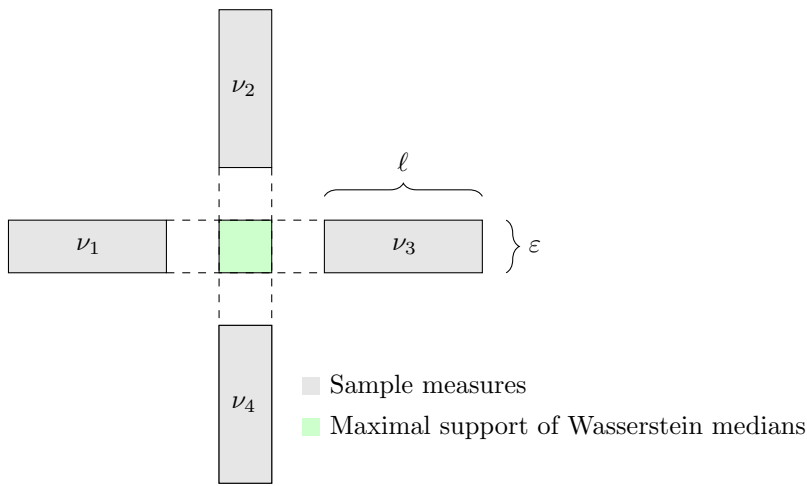

\end{example}

\subsection{Dual Formulation} To introduce a dual formulation \`a la Kantorovich of \eqref{eq:Medianprob}, we fix a point $x_0 \in \cX$ and define the spaces
\begin{align*}
    Y_0 := \left\{ f \in C(\cX)\; : \; \lim_{d(x,x_0) \rightarrow \infty} \frac{f(x)}{1 + d(x,x_0)} = 0\right\}, \quad
    Y_b := \left\{ f \in C(\cX)\; : \; \sup_{x \in \cX} \frac{\vert f(x)\vert}{1 + d(x,x_0)} < \infty\right\}.
\end{align*}
Note that these spaces are independent of the choice of  $x_0$ and that the dual of $Y_0$ may be identified with the space of signed measures
with finite first moment
\begin{equation*}
    (Y_0)^* =\left\{\mu \in \cM(\cX)\; : \; (1+d(x,x_0))\mu \in \cM(\cX)  \right\}.
\end{equation*}
We will also assume here without loss of generality that all the weights $\lambda_i$ are strictly positive  in the Wasserstein median problem \eqref{eq:Medianprob} and define for $\lambda>0$:
\[\Lip_{\lambda}(\cX):=\{ v : \cX \to \R, \; \vert v(x)-v(y)\vert \le \lambda d(x,y), \; \ \text{for all} \  (x,y)\in \cX^2\}.\]
Setting $c_i:=\lambda_i d$, the $c_i$-transform of a function $u:\cX \to \R$, denoted $u^{c_i}$, is by definition given by
\[u^{c_i}(x):=\inf_{y \in \cX} \{\lambda_i d(x,y)-u(y) \}, \; \text{for all} \ x\in \cX,\]
note that, by the triangle inequality  $u^{c_i}$ is either everywhere $-\infty$ or a $\lambda_i$-Lipschitz function. It is also a classical fact (see, e.g.~Proposition 3.1 in \cite{santambrogio_optimal_2015}) that $u\in \Lip_{\lambda_i}(\cX)$ if and only if $u^{c_i}=-u$. 

Following \cite{agueh_barycenters_2011}, let us now consider the concave maximization problem
\begin{equation}\label{eq:Predual}
    \quad \sup \bigg\{\sum_{i=1}^{N}\int_{\cX} u_i^{c_i} d\nu_i\; : \; u_i \in Y_0, \ \sum_{i=1}^{N} u_i=0\bigg\}, 
\end{equation}
and its relaxed version 
\begin{equation}\label{eq:PredualRelax}
    \quad \sup \ \bigg\{\sum_{i=1}^{N}\int_{\cX} u_i^{c_i} \d \nu_i \; : \; u_i \in Y_b, \ \sum_{i=1}^{N} u_i=0 \bigg\}. 
\end{equation}
By definition of the $c_i$-transform, it is easy to check the weak duality relation
\begin{equation*}
    \min\eqref{eq:Medianprob} \geq \sup \eqref{eq:PredualRelax} \geq \sup \eqref{eq:Predual}.
\end{equation*}

Using convex duality by proceeding exactly  as in  the proof of Propositions 2.2 and 2.3 in \cite{agueh_barycenters_2011} for the Wasserstein barycenter case, one can show that \eqref{eq:Medianprob} is the dual of \eqref{eq:Predual} and that strong duality holds i.e.:  $\min\eqref{eq:Medianprob} = \sup \eqref{eq:PredualRelax} =\sup \eqref{eq:Predual}$. It will be convenient in the sequel to consider yet another formulation of  \eqref{eq:PredualRelax}:
\begin{equation}\label{eq:DualLip}
	\sup \biggl\{ \sum_{i=1}^{N} \int_{\cX} u_i \d \nu_i  \; : \;  \ u_i \in \Lip_{\lambda_i}(\cX), \; i=1, \dots, N,  \;  \sum_{i=1}^{N}u_i\leq 0 \biggr\}. 
	\end{equation}

\begin{prop}[\textbf{Lipschitz formulation of the dual problem}]\label{Lipformdual}  Let $(\nu_1, \dots, \nu_N)\in \mathcal{P}_1(\cX)^N$ and $\bl:=(\lambda_1, \dots, \lambda_N)\in \Delta_N$ with each $\lambda_i$ strictly positive. Then we have
\begin{equation}\label{dualKantomedian}
    \min \eqref{eq:Medianprob} = \sup \eqref{eq:PredualRelax}=  \max \eqref{eq:DualLip},
\end{equation}
where we have written max \eqref{eq:DualLip} to emphasize that the supremum in \eqref{eq:DualLip} is attained.

\end{prop}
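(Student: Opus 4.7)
The equality $\min \eqref{eq:Medianprob} = \sup \eqref{eq:PredualRelax}$ is already granted by the convex duality argument sketched in the paragraph preceding the statement, so the remaining content is to show $\sup \eqref{eq:PredualRelax} = \sup \eqref{eq:DualLip}$ and that the supremum in \eqref{eq:DualLip} is attained. My plan is to sandwich $\sup \eqref{eq:DualLip}$ between $\sup \eqref{eq:PredualRelax}$ and $\min \eqref{eq:Medianprob}$, which forces the three values to coincide, and then to run a direct compactness argument on a maximizing sequence.

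The inequality $\sup \eqref{eq:PredualRelax} \leq \sup \eqref{eq:DualLip}$ is essentially a change of variables: given $(u_1, \dots, u_N)$ admissible for \eqref{eq:PredualRelax}, I would set $v_i := u_i^{c_i}$, which is $\lambda_i$-Lipschitz, and observe that taking $y=x$ in the infimum defining $u_i^{c_i}$ gives $v_i \leq -u_i$ pointwise, so $\sum_i v_i \leq -\sum_i u_i = 0$. Hence $(v_1, \dots, v_N)$ is admissible for \eqref{eq:DualLip} with the same objective value. The reverse bound $\sup \eqref{eq:DualLip} \leq \min \eqref{eq:Medianprob}$ is a one-line consequence of the Kantorovich--Rubinstein formula \eqref{kantorub}: for any admissible $(v_i)$ and any $\mu \in \cP_1(\cX)$, since $v_i / \lambda_i$ is $1$-Lipschitz,
\begin{equation*}
\int_{\cX} v_i \,\d\nu_i - \int_{\cX} v_i \,\d\mu \leq \lambda_i \, W_1(\nu_i, \mu),
\end{equation*}
and summing over $i$ together with $\int \sum_i v_i \,\d \mu \leq 0$ gives $\sum_i \int v_i \,\d \nu_i \leq \cF_{\bl, \bnu}(\mu)$; the infimum in $\mu$ then yields the bound.

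The main content is the attainment. Given a maximizing sequence $(v_1^n, \dots, v_N^n)$ and a reference point $x_0 \in \cX$, I would normalize by adding constants $c_i^n$ summing to zero, taking $c_i^n := -v_i^n(x_0)$ for $i \geq 2$ and $c_1^n := \sum_{i\geq 2} v_i^n(x_0)$. Since $\sum_i c_i^n = 0$ this leaves the objective value invariant, preserves the Lipschitz constraints and the inequality $\sum_i v_i^n \leq 0$, and enforces $v_i^n(x_0)=0$ for $i\geq 2$ and $v_1^n(x_0) = \sum_j v_j^n(x_0) \leq 0$. A uniform lower bound on $v_1^n(x_0)$ then follows from the boundedness of the objective along the sequence combined with $|\int v_i^n \,\d \nu_i| \leq \lambda_i \int d(x_0,\cdot)\, \d \nu_i < \infty$ for $i \geq 2$ and the Lipschitz control on $v_1^n$. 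This yields the uniform growth bound $|v_i^n(x)| \leq C(1+d(x,x_0))$. Since $(\cX,d)$ is proper, Arzel\`a--Ascoli provides a subsequence converging locally uniformly to some $(v_1, \dots, v_N) \in \prod_i \Lip_{\lambda_i}(\cX)$ satisfying $\sum_i v_i \leq 0$, and the growth bound combined with $\nu_i \in \cP_1(\cX)$ and dominated convergence lets the objective pass to the limit, delivering a maximizer.

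I expect the normalization step to be the main obstacle: the constraint set in \eqref{eq:DualLip} is only \emph{partially} translation-invariant (only shifts whose components sum to $\leq 0$ preserve admissibility, and only those summing to $0$ preserve the objective), so one has to juggle the components carefully in order to obtain simultaneously pointwise control, Lipschitz control, and integral control along the maximizing sequence.
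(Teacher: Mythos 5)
Your proof is correct and follows the same overall architecture as the paper's: pass from \eqref{eq:PredualRelax} to \eqref{eq:DualLip} by $c$-transform, close the chain of inequalities, and then prove attainment in \eqref{eq:DualLip} by a normalization plus Ascoli--Arzel\`a argument. There are two places where your route differs mildly from the paper's. First, to close the loop you invoke weak duality ($\sup\eqref{eq:DualLip} \le \min\eqref{eq:Medianprob}$ via Kantorovich--Rubinstein), whereas the paper instead shows $\sup\eqref{eq:DualLip} \le \sup\eqref{eq:PredualRelax}$ directly by constructing from admissible $(u_1,\dots,u_N)$ the competitor $(\psi_1,\dots,\psi_N) = (-u_1,\dots,-u_{N-1}, u_1+\dots+u_{N-1})$; your shortcut is slightly cleaner and is available because $\min\eqref{eq:Medianprob} = \sup\eqref{eq:PredualRelax}$ is already granted. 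Second, for attainment you normalize pointwise ($v_i^n(x_0)=0$ for $i\ge 2$, which forces $v_1^n(x_0)\le 0$ and, via the objective bound, bounded below), while the paper normalizes in mean ($\int u_i\,\d\nu_i=0$ for $i< N$ and restricts to $\int u_N\,\d\nu_N\ge 0$); both normalizations yield the same uniform linear growth $|v_i^n(x)|\le C(1+d(x,x_0))$ needed for compactness and dominated convergence, so this is a cosmetic difference. The only detail you skipped is that $u_i^{c_i}$ could a priori be identically $-\infty$ for some admissible $(u_i)\in Y_b^N$; this is harmless because such tuples give objective $-\infty$ in \eqref{eq:PredualRelax} and hence do not influence the supremum, but it is worth a one-line remark.
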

\begin{proof}
Recall that  $\min \eqref{eq:Medianprob} = \sup \eqref{eq:PredualRelax}$.  
\smallskip

{\textbf{Step 1}:}  $\sup\eqref{eq:PredualRelax}\geq\sup\eqref{eq:DualLip}$. Let $(u_1,\dots,u_N)$ be admissible for \eqref{eq:DualLip}, take $\boldsymbol{\psi}=(\psi_1,\dots,\psi_N)$, with $\psi_i = -u_i$ for all $i=1,\dots, N-1$ and $\psi_N = u_1 + \dots + u_N$. Since Lipschitz functions belong to $Y_b$, $\boldsymbol{\psi}$ is admissible for \eqref{eq:PredualRelax} and we have:
	\begin{equation*}
	\sum_{i=1}^{N}\int_{\cX} u_i \d\nu_i =\sum_{i=1}^{N-1}\int_{\cX} (-\psi_i) \d\nu_i+\int_{\cX} u_N \d\nu_N.
	\end{equation*}
	For $i=1, \dots, N-1$, since  $\psi_i \in \Lip_{\lambda_i}(\cX)$, we have $-\psi_i=\psi_i^{c_i}$. Moreover, $\psi_N=u_1+\dots + u_{N-1}\leq -u_N$, hence $\psi_N^{c_N} \geq u_N$, yielding
	\begin{equation*}
	\sum_{i=1}^{N}\int_{\cX}  u_i \d\nu_i \leq \sum_{i=1}^{N}\int_{\cX}  \psi_i^{c_i} \d\nu_i \leq \sup\eqref{eq:PredualRelax} .
	\end{equation*}
	
\smallskip

{\textbf{Step 2}:}  $ \sup\eqref{eq:DualLip} \geq \sup\eqref{eq:PredualRelax}$. Let $\boldsymbol{\psi}=(\psi_1,\dots,\psi_N)$ be admissible for \eqref{eq:PredualRelax}. Consider $\boldsymbol{u}=(u_1,\dots,u_N)=(\psi_1^{c_1},\dots,\psi_N^{c_N})$. By construction, each $u_i$ is  $\lambda_i$-Lipschitz and to see  that $\boldsymbol{u}$ is admissible for \eqref{eq:DualLip} we observe that for every $x\in \cX$:
	\begin{equation*}
	\sum_{i=1}^N u_i(x)=\sum_{i=1}^N \psi_i^{c_i}(x)=\sum_{i=1}^N \inf_y \{\lambda_i d(x,y)-\psi_i(y) \}\leq -\sum_{i=1}^N \psi_i(x)=0,
	\end{equation*}
	and, then,
	\[\sum_{i=1}^N \int_{\cX} \psi_i^{c_i } \d \nu_i =\sum_{i=1}^N \int_{\cX} u_i \d \nu_i  \leq \sup\eqref{eq:DualLip}.\]
	
\smallskip

{\textbf{Step 3}:} the supremum is attained in $\sup\eqref{eq:DualLip}$. We note that both constraints and the objective function in $\eqref{eq:DualLip}$ are unchanged when one replaces $u_i$ by $u_i+ \alpha_i$ where  the $\alpha_i$'s are constant that sum to $0$, we may therefore restrict the maximization in $\sup\eqref{eq:DualLip}$ to the smaller admissible set of potentials $(u_1, \dots, u_N)$ such that
\begin{equation}\label{furtherc1}
 u_i \in \Lip_{\lambda_i}(\cX), \ \sum_{i=1}^N u_i \leq 0, \ \text{and} \ \int_\cX u_i \d \nu_i=0, \quad \text{for} \ i=1, \dots, N-1.
\end{equation} 
Since this set contains $(0, \ldots, 0)$ we can reduce it even further by considering only potentials for which the objective is positive: 
\begin{equation}\label{furtherc2}
\int_\cX u_N \d \nu_N \geq 0.
\end{equation}
If we denote by $K$ the set of potentials that satisfy \eqref{furtherc1} and \eqref{furtherc2}, we observe that if $(u_1, \dots, u_N)\in K$ then for $i=1, \dots, N-1$ and $x\in \cX$, since $u_i$ is $\lambda_i$-Lipschitz, one has
\[  u_i(x) \leq \int_{\cX} u_i \d \nu_i + \lambda_i \int_{\cX} d(x, y) \d \nu_i (y) \le  \lambda_i d(x,x_0) + m_i, \; m_i:=\lambda_i \int_{\cX} d(x_0, y) \d \nu_i(y).\] 
Reasoning in a similar way for $-u_i$, we get bounds with linear growth, namely $\vert u_i \vert \leq  \lambda_i d(\cdot,x_0)+ m_i$ for $i=1, \ldots, N-1$. Since $u_N \leq -\sum_{i=1}^N u_i$ we get a similar upper bound with linear growth  for $u_N$, and, for a lower bound, we use \eqref{furtherc2} which, together with the fact that $u_N$ is $\lambda_N$-Lipschitz gives
\[ u_N \geq -\lambda_N d(\cdot, x_0)-\lambda_N \int_{\cX} d(x_0, y) \d \nu_N(y).\]
Let us now take a maximizing sequence in $K$ for \eqref{eq:DualLip}. The above linear bounds and Ascoli--Arzel\`a's theorem guarantee that this sequence converges locally uniformly to some  $\boldsymbol{u}$, and  again by these linear bounds, the fact that  $\nu_i\in \cP_1(\cX)$ for all $i =1,\dots, N$, and Lebesgue's dominated convergence theorem, one deduces that $\boldsymbol{u}\in K$ and $\boldsymbol{u}$ actually solves \eqref{eq:DualLip}.
\end{proof}

We may derive from the primal-dual relations between \eqref{eq:Medianprob} and \eqref{eq:DualLip} a characterization of Wasserstein medians in terms of Kantorovich potentials

 \begin{theorem}[\textbf{Optimality conditions for Wasserstein medians}] \label{PrimalDualOptimalityConditions}
	Let $\bnu=(\nu_1, \dots, \nu_N) \in \cP_1(\cX)^N$, $\bl=(\lambda_1, \dots, \lambda_N) \in \Delta_N$ with $\lambda_i>0$ and let $\nu \in \cP_1(\cX)$. Then $\nu \in \Medl(\bnu)$ if and only if there exist $\psi_1, \dots, \psi_N$ such that
	\begin{enumerate}
	\item for $i=1, \dots, N$, $\psi_i \in \Lip_1(\cX)$ is a Kantorovich potential between $\nu_i$ and $\nu$, i.e.
	\[W_1(\nu_i, \nu)=\int_{\cX} \psi_i \d \nu_i- \int_{\cX} \psi_i \d \nu,\]
	\item there holds 
		\[\sum_{i=1}^N \lambda_i \psi_i \leq 0 \ \text{on $\cX$, and} \ \sum_{i=1}^N \lambda_i \psi_i = 0 \mbox{ on $\spt (\nu)$}. \]
	\end{enumerate}
	 
 \end{theorem}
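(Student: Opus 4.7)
The plan is to derive both directions directly from the strong duality and existence of an optimal dual potential established in Proposition \ref{Lipformdual}. The key observation is that, since all weights $\lambda_i$ are strictly positive, an admissible tuple $(u_1,\dots,u_N)$ for \eqref{eq:DualLip} with $u_i\in\Lip_{\lambda_i}(\cX)$ corresponds bijectively to an $N$-tuple of $1$-Lipschitz functions $\psi_i:=u_i/\lambda_i$ satisfying $\sum_i \lambda_i\psi_i\le 0$.

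For necessity, assume $\nu\in\Medl(\bnu)$ and let $(u_1,\dots,u_N)$ be a maximizer of \eqref{eq:DualLip} provided by Proposition \ref{Lipformdual}. Set $\psi_i:=u_i/\lambda_i\in\Lip_1(\cX)$. By the Kantorovich--Rubinstein formula \eqref{kantorub},
\[
\int_\cX\psi_i\,\d\nu_i-\int_\cX\psi_i\,\d\nu\le W_1(\nu_i,\nu),\quad i=1,\dots,N.
\]
Multiplying by $\lambda_i$ and summing yields
\[
\sum_{i=1}^N\int_\cX u_i\,\d\nu_i-\int_\cX\Big(\sum_{i=1}^N\lambda_i\psi_i\Big)\d\nu\le \sum_{i=1}^N \lambda_i W_1(\nu_i,\nu)=\cF_{\bl,\bnu}(\nu),
\]
where the equality on the right uses that $\nu$ is a Wasserstein median. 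By strong duality (Proposition \ref{Lipformdual}) the first sum on the left equals $\min\eqref{eq:Medianprob}=\cF_{\bl,\bnu}(\nu)$, so we obtain
\[
-\int_\cX\Big(\sum_{i=1}^N\lambda_i\psi_i\Big)\d\nu\le 0.
\]
Since $\sum_i\lambda_i\psi_i\le 0$ on $\cX$, this integral is nonpositive as well, so the inequality is an equality. Continuity of $\sum_i\lambda_i\psi_i$ then forces it to vanish on $\spt(\nu)$, which is condition (2). Moreover the sandwich also forces equality in each individual Kantorovich--Rubinstein bound (since all summands have the same sign), giving condition (1), i.e.\ each $\psi_i$ is a Kantorovich potential between $\nu_i$ and $\nu$.

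Conversely, assume $\nu\in\cP_1(\cX)$ and $\psi_1,\dots,\psi_N$ satisfy (1) and (2). Then $u_i:=\lambda_i\psi_i\in\Lip_{\lambda_i}(\cX)$ and $\sum_i u_i=\sum_i\lambda_i\psi_i\le 0$, so $(u_1,\dots,u_N)$ is admissible for \eqref{eq:DualLip}. Using condition (1) and then condition (2),
\[
\sum_{i=1}^N\int_\cX u_i\,\d\nu_i=\sum_{i=1}^N\lambda_i\Big(\int_\cX\psi_i\,\d\nu_i-\int_\cX\psi_i\,\d\nu\Big)+\int_\cX\Big(\sum_{i=1}^N\lambda_i\psi_i\Big)\d\nu=\cF_{\bl,\bnu}(\nu).
\]
By weak duality $\sum_i\int_\cX u_i\,\d\nu_i\le\min\eqref{eq:Medianprob}$, hence $\cF_{\bl,\bnu}(\nu)\le\min\eqref{eq:Medianprob}$ and $\nu\in\Medl(\bnu)$. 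The only mildly delicate step is extracting condition (2) from the integral identity; this is straightforward because $\sum_i\lambda_i\psi_i$ is continuous, so the classical fact that a nonpositive continuous function with zero integral against $\nu$ vanishes on $\spt(\nu)$ applies verbatim.
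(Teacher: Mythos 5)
Your proof is correct and follows essentially the same strategy as the paper's: both invoke strong duality and an optimal potential from Proposition~\ref{Lipformdual}, set $\psi_i = u_i/\lambda_i$, and exploit the chain of Kantorovich--Rubinstein inequalities combined with $\sum_i\lambda_i\psi_i\le 0$ and nonnegativity of $\nu$, which becomes tight exactly at a median. The only cosmetic difference is that you split necessity and sufficiency explicitly while the paper packages both in a single equivalence via the slackness identity, and your phrase ``all summands have the same sign'' would be cleaner as: the $\lambda_i$-weighted sum of the nonpositive quantities $\int\psi_i\,\d\nu_i-\int\psi_i\,\d\nu-W_1(\nu_i,\nu)$ vanishes, forcing each to vanish since $\lambda_i>0$.
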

\begin{proof}
It follows from the duality result of proposition \ref{Lipformdual} that $\nu \in \Medl(\bnu)$ if and only if there exists $(u_1, \ldots, u_N)$ admissible for \eqref{eq:DualLip} such that
\begin{equation}\label{slackmed}
\sum_{i=1}^N \lambda_i W_1(\nu_i, \nu)=\sum_{i=1}^N \int_{\cX} u_i \d \nu_i
\end{equation}
(in which case $(u_1, \ldots, u_N)$ automatically solves \eqref{eq:DualLip}). Setting $\psi_i= u_i/ {\lambda_i}$ we thus have   $\psi_i \in \Lip_1(\cX)$ and $\sum_{i=1}^N \lambda_i \psi_i \le 0$ on $\cX$. By the Kantorovich--Rubinstein duality formula \eqref{kantorub}, we have
\begin{equation}\label{firstkcweak}
   W_1(\nu_i, \nu) \ge \int_{\cX} \psi_i \d \nu_i- \int_{\cX} \psi_i \d \nu.
\end{equation}
Multiplying by $\lambda_i$ summing and using the fact that $\nu$ is a nonnegative measure and $\sum_{i=1}^N \lambda_i \psi_i \leq 0$ thus yields
\[\begin{split}
\sum_{i=1}^N \lambda_i W_1(\nu_i, \nu)\geq  \sum_{i=1}^N \lambda_i \int_{\cX} \psi_i \d \nu_i - \sum_{i=1}^N \lambda_i  \int_{\cX} \psi_i \d \nu\\
\ge  \sum_{i=1}^N \lambda_i \int_{\cX} \psi_i \d \nu_i=
\sum_{i=1}^N \int_{\cX} u_i \d \nu_i,
\end{split}\]
so that \eqref{slackmed} holds if and only if each inequality \eqref{firstkcweak} is an equality, i.e.~$\psi_i$ is a Kantorovich potential between $\nu_i$ and $\nu$ and 
\[  \int_{\cX} \Big(\sum_{i=1}^N \lambda_i \psi_i\Big) \d \nu=0,\]
i.e.~$\sum_{i=1}^N \lambda_i \psi_i = 0$ on $\spt (\nu)$ since each $\psi_i$ is continuous. 
\end{proof}

\section{Beckmann minimal flow formulation}\label{sec:beckmann}

In this section, we consider the Wasserstein median problem on a  convex compact subset $\Omega$ of  $\R^d$, with non empty interior (which is not really restrictive) equipped with the Euclidean distance. In this setting, we will see that, taking advantage of the so-called Beckmann's minimal flow formulation of Monge's problem, one can derive a system of PDEs that characterize Wasserstein medians. We are given $\bl\in \Delta_N$ with $\lambda_i>0$ for all $i=1,\dots, N$, and $\bnu=(\nu_1, \dots, \nu_N)\in \cP(\Omega)^N$, we know from Corollary \ref{pMoments} that any measure in $\Medl(\bnu)$ is supported on $\Omega$.

\subsection{The Beckmann problem} 

We denote by $\mathcal{M}(\Omega,\mathbb{R}^d)$ the set of vector valued measures on $\Omega$. For such a measure $\sigma$, we denote by $\vert \sigma\vert \in \mathcal{M}_+(\Omega)$ its total variation measure and recall that one can write $\d \sigma = \hat{\sigma} \d \vert \sigma \vert$ for some Borel map $\hat{\sigma}$ such that $ \vert \hat{\sigma} \vert =1$, $\vert \sigma\vert$-a.e.;  for every test-function $\phi \in C(\Omega, \R^d)$, one can therefore write
\[\int_{\Omega} \phi \cdot \; \d \sigma = \int_{\Omega} \phi(x) \cdot \hat{\sigma}(x) \; \d  \vert \sigma \vert(x).\]

Let us denote by $\mathcal{M}_\div(\Omega,\mathbb{R}^d)$ the set of vector valued measures $\sigma$ whose divergence $\nabla \cdot \sigma$ is a finite measure, where $\nabla \cdot \sigma$ is defined in the sense of distributions. Given $i=1, \dots, N$ and $\nu \in \cP(\Omega)$, a vector-valued measure $\sigma_i\in \mathcal{M}_\div(\Omega,\mathbb{R}^d)$ is an admissible flow between $\nu_i$ and $\nu$ if it solves
\[ \nabla \cdot \sigma_i +\nu_i=\nu\]
in the weak sense, i.e.
\[\int_{\Omega} \nabla \phi \cdot \; \d \sigma_i=\int_{\Omega} \phi \; \d (\nu_i-\nu), \; \text{for all} \ \phi \in C^1(\Omega).\]
Beckmann's formulation of the optimal transport problem with distance cost between $\nu_i$ and $\nu$  consists in finding an admissible flow with minimal total variation, it thus reads
\begin{equation}\label{beckmannnuinu}
\inf_{\sigma_i  \in \mathcal{M}_\div(\Omega,\mathbb{R}^d)} \left\{ \vert \sigma_i \vert (\Omega) \; : \; \nabla \cdot \sigma_i +\nu_i=\nu\right\}
\end{equation}
where $\vert \sigma_i \vert (\Omega)$ denotes the total variation of $\sigma_i$. This problem was introduced by Beckmann in the 1950's \cite{Beckmann} and its connection with the optimal transport problem $W_1(\nu, \nu_i)$ is  well-known, as we shall recall now, referring the reader to \cite{santambrogio_optimal_2015} and \cite{ambrosio_lecture_2003} for detailed statements and proofs. First of all, let us recall that the value of \eqref{beckmannnuinu} coincides with the Wasserstein distance $W_1(\nu, \nu_i)$ so recalling the Kantorovich--Rubinstein formula, we have (and we write min and max on purpose to emphasize the existence of solutions):
\begin{equation}\label{summaryduality}
W_1(\nu_i, \nu)=\min_{\sigma_i  \in \mathcal{M}_\div(\Omega,\mathbb{R}^d)} \left\{ \vert \sigma_i \vert (\Omega) \; : \; \nabla \cdot \sigma_i +\nu_i=\nu\right\}=\max_{u_i\in \Lip_1(\Omega)} \int_{\Omega} u_i \; \d (\nu_i-\nu). 
\end{equation}
Following the seminal work of  \cite{bbs1997, bb2001}, the sharp connection between optimal flows, i.e.~solutions of \eqref{beckmannnuinu} and Kantorovich potentials is captured by the Monge--Kantorovich PDE system which we now recall. 

\begin{defi}[\textbf{Monge--Kantorovich PDE}]
A pair $(u_i, \rho_i)\in \Lip_1(\Omega)\times \mathcal{M}_+(\Omega)$ solves the Monge--Kantorovich system between $\nu_i$ and $\nu$:
\begin{equation}\label{mksysnuinu}
\nabla \cdot (\rho_i \nabla_{\rho_i} u_i)+ \nu_i=\nu, \; \vert \nabla_{\rho_i} u_i \vert =1 \mbox{ $\rho_i$-a.e.}
\end{equation}
if there exists $(u_i^\eps)_{\eps>0} \in C^1(\Omega)\cap \Lip_1(\Omega)$ converging uniformly to $u_i$ as $\eps\to 0$, such that $\nabla u_i^\eps$ converges in $L^2(\rho_i)$ to some $\hat{\sigma}_i$ (so that $\vert \hat{\sigma}_i\vert \leq 1$) and
\begin{equation}\label{divsighat}
\nabla \cdot (\rho_i \hat{\sigma}_i)+ \nu_i=\nu, \; \vert \hat{\sigma}_i  \vert =1 \mbox{ $\rho_i$-a.e.}.
\end{equation}
\end{defi}\label{mksystemnuinu}

Assume that $(u_i, \rho_i)\in \Lip_1(\Omega)\times \mathcal{M}_+(\Omega)$ solves the Monge--Kantorovich system between $\nu_i$ and $\nu$, and let  $(u_i^\eps)_{\eps>0} \in C^1(\Omega)\cap \Lip_1(\Omega)$ converge uniformly to $u_i$ as $\eps\to 0$, and be such that $\nabla u_i^\eps$ converges in $L^2(\rho_i)$ to some $\hat{\sigma}_i$ which satisfies \eqref{divsighat}, then using the fact that $\sigma_i:= \rho_i \hat{\sigma}_i$ is admissible for \eqref{beckmannnuinu} we deduce from \eqref{summaryduality} and \eqref{divsighat}:
\[\begin{split}
W_1(\nu_i, \nu) \ge \int_{\Omega} u_i \; \d(\nu_i-\nu) &= \lim_{\eps \to 0}  \int_{\Omega} u_i^\eps \; \d(\nu_i-\nu)= \lim_{\eps \to 0}  \int_{\Omega} \nabla u_i^\eps \cdot \hat{\sigma}_i  \; \d \rho_i\\
&=   \int_{\Omega} \vert  \hat{\sigma}_i \vert^2 \; \d \rho_i = \rho_i (\Omega)= \vert \sigma_i \vert(\Omega) \geq W_1(\nu_i, \nu) 
\end{split}\]
which proves that $u_i$ is a Kantorovich potential and $\sigma_i:= \rho_i \hat{\sigma}_i$ is an optimal flow:
\[W_1(\nu_i, \nu)=  \int_{\Omega} u_i \; \d(\nu_i-\nu)=\vert \sigma_i \vert(\Omega).\]
This also enables one to define unambiguously the $L^2(\rho_i)$-limit of $\nabla v_i^\eps$ for any \emph{any} approximation\footnote{Note that such approximations can easily be performed by first extending $u_i$ to a $1$-Lipschitz function to the whole of $\R^d$ and then mollifying by convolution this extension.} of $u_i$ by $C^1(\Omega)\cap \Lip_1(\Omega)$, indeed if $(v_i^\eps)_{\eps>0}$ is a sequence of such approximations, using again \eqref{divsighat}, we have:
\[\Vert \nabla v_i^\eps- \hat{\sigma}_i \Vert^2_{L^2(\rho_i)} \leq 2 \vert \sigma_i \vert(\Omega)-2 \int_\Omega \nabla v_i^\eps \cdot \hat{\sigma}_i \d \rho_i= 2 W_1(\nu_i, \nu)- 2\int_{\Omega} v_i^\eps \d (\nu_i-\nu) \to 0 \mbox{ as $\eps \to 0$}. \]
In other words, in definition \ref{mksystemnuinu}, the direction $\hat{\sigma}_i \in L^2(\rho_i)$ only depends on $\rho_i$ and $u_i$  and not on the approximation of $u_i$ and it is legitimate to set $\nabla_{\rho_i} u_i=\hat{\sigma}_i$
and to call it the tangential gradient of $u_i$ with respect to $\rho_i$ (and justify a posteriori the notation $\nabla_{\rho_i} u_i$). We have seen that solutions of the Monge--Kantorovich system yield optimal flows and optimal potentials, but the converse is easy to check. Indeed, let $u_i \in \Lip_1(\Omega)$ and $\sigma_i \in \mathcal{M}_\div(\Omega,\mathbb{R}^d)$ be such that 
\[W_1(\nu_i, \nu)=\int_{\Omega} u_i \; \d(\nu_i-\nu)=\vert \sigma_i \vert(\Omega)\]
setting $\rho_i:=\vert \sigma_i\vert$ and $\hat{\sigma}_i$ such that $\vert \hat{\sigma}_i\vert=1$ $\rho_i$-a.e.~and $\d \sigma_i = \hat{\sigma}_i \d \rho_i$, then \eqref{divsighat} holds and if $(u_i^\eps)_{\eps>0}$ is a sequence of $C^1 \cap \Lip_1$ approximations of $u_i$ then 
\[\Vert \nabla u_i^\eps- \hat{\sigma}_i \Vert^2_{L^2(\rho_i)} \leq 2 \vert \sigma_i \vert(\Omega)-2 \int_\Omega \nabla u_i^\eps \cdot \hat{\sigma}_i \d \rho_i= 2 W_1(\nu_i, \nu)- 2\int_{\Omega} u_i^\eps \d (\nu_i-\nu) \to 0 \mbox{ as $\eps \to 0$}\]
so that $(u_i, \rho_i)$ solves  the Monge--Kantorovich system \eqref{mksysnuinu} which therefore fully characterizes the primal-dual extremality relations in \eqref{summaryduality}.

Note  that if $\rho_i$ is absolutely continuous with respect to the Lebesgue measure $\rho_i \in L^1(\Omega)$, then whenever  $u_i \in \Lip_1(\Omega)$, $\rho_i \nabla u_i$ belongs to $L^1(\Omega)$ so $\nabla \cdot (\rho_i \nabla u_i)$ is well defined in the sense of distributions  and  \eqref{mksysnuinu} simplifies to 
\begin{equation}\label{mksysnuinureg}
\nabla \cdot (\rho_i \nabla u_i)+ \nu_i=\nu, \; \vert \nabla u_i \vert =1 \mbox{ $\rho_i$-a.e.}
\end{equation}

 In Monge--Kantorovich theory, $\rho_i=\vert \sigma_i\vert$ where $\sigma_i$ is an optimal flow, is called the transport density and the study of integral estimates for transport densities has been the object of an intensive stream of research \cite{DPP1, EDPP, DPP2, Santamdensity, Dweik}. In particular, if $\nu_i$ is absolutely continuous  with respect to the Lebesgue measure (and $\nu$ is an arbitrary probability measure) then the solution $\sigma_i$ of \eqref{beckmannnuinu} is unique (Theorem 4.14 and Corollary 4.15 in  \cite{santambrogio_optimal_2015})  and absolutely continuous as well (Theorem 4.16 in \cite{santambrogio_optimal_2015}) so that the transport density $\rho_i$ is in $L^1$ and the Monge--Kantorovich PDE can be understood as in \eqref{mksysnuinureg} without using the notion of tangential gradient. Higher integrability results can be found in Theorem 4.20 in \cite{santambrogio_optimal_2015}.
 
 \smallskip
 
 The connection between optimal flows, transport densities and optimal plans, is also well-known, namely given $\gamma_i \in \Pi(\nu, \nu_i)$ optimal i.e.~such that $W_1(\nu_i, \nu)=\int_{\Omega\times \Omega} \vert x-x_i\vert \d \gamma_i(x, x_i)$, define the vector valued measure $\sigma_{\gamma_i}$ by
\begin{equation}\label{defdesigggam}
\int_\Omega \phi \cdot \d \sigma_{\gamma_i}=\int_{\Omega\times \Omega} \int_0^1 \phi(x+t(x_i-x)) \cdot (x_i-x) \; \d t  \;  \d \gamma_i(x, x_i), \; \text{for all} \ \phi \in C(\Omega, \R^d).
\end{equation} 
Then, $\nabla \cdot \sigma_{\gamma_i}+\nu_i=\nu$ and $\sigma_{\gamma_i}$ is an optimal flow i.e.~solves \eqref{beckmannnuinu}, moreover (see Theorem 4.13  in \cite{santambrogio_optimal_2015}), any $\sigma_i$ solving \eqref{beckmannnuinu} is of the form $\sigma_{\gamma_i}$ for some optimal plan $\gamma_i$. We also refer to  in \cite{santambrogio_optimal_2015} and  \cite{ambrosio_lecture_2003} for more on the subject and in particular connections between optimal flows and the directions of the so-called transport rays.

	\begin{figure}[!t]
    \centering
	\begin{subfigure}{0.45\textwidth}
	\centering
	\includegraphics[width=1\textwidth]{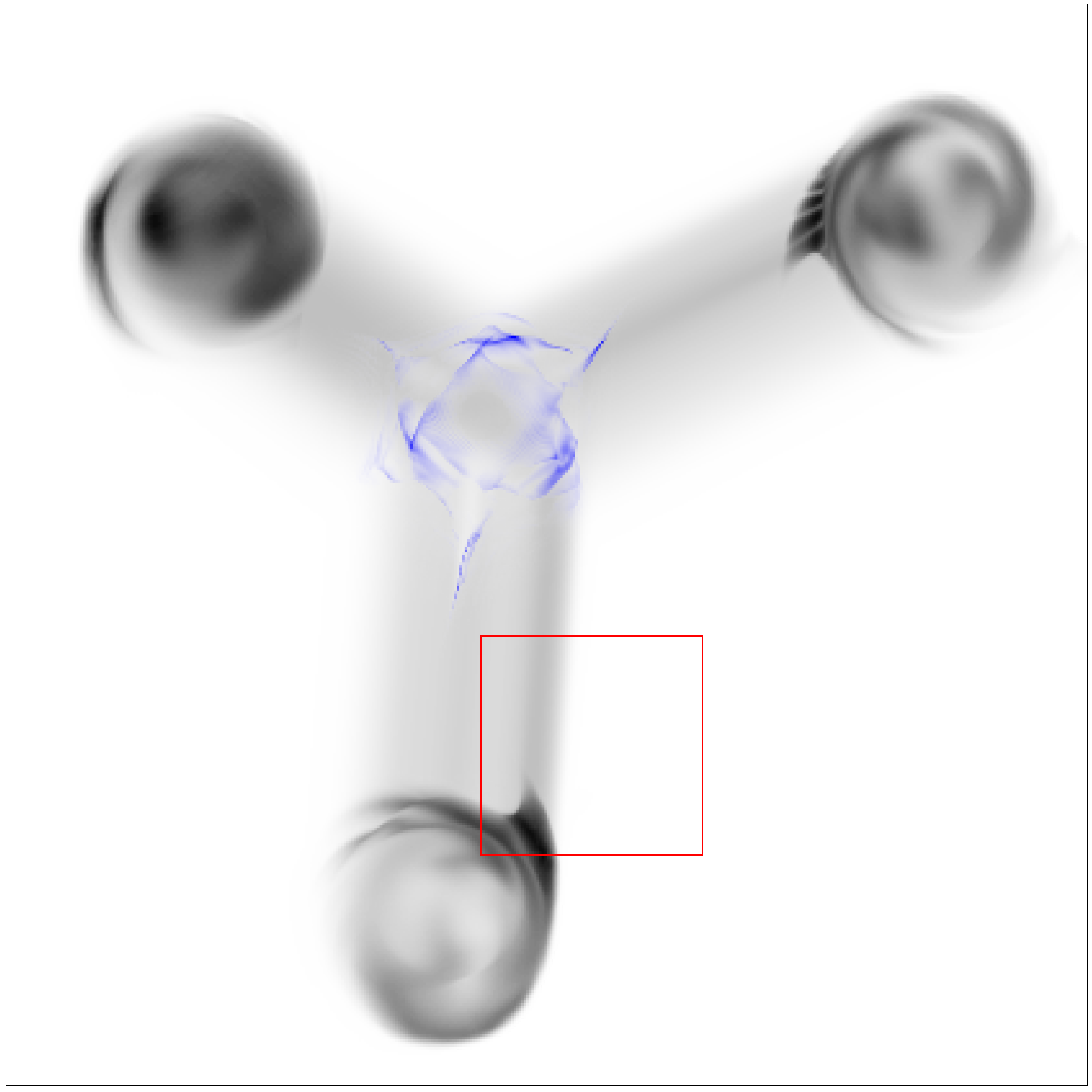}
	\caption{Superimposition of Wasserstein median, transport densities, and sample measures.}
	\end{subfigure}
	\hspace{0.5cm}
	\begin{subfigure}{0.2\textwidth}
		\centering
		\includegraphics[width=1\linewidth]{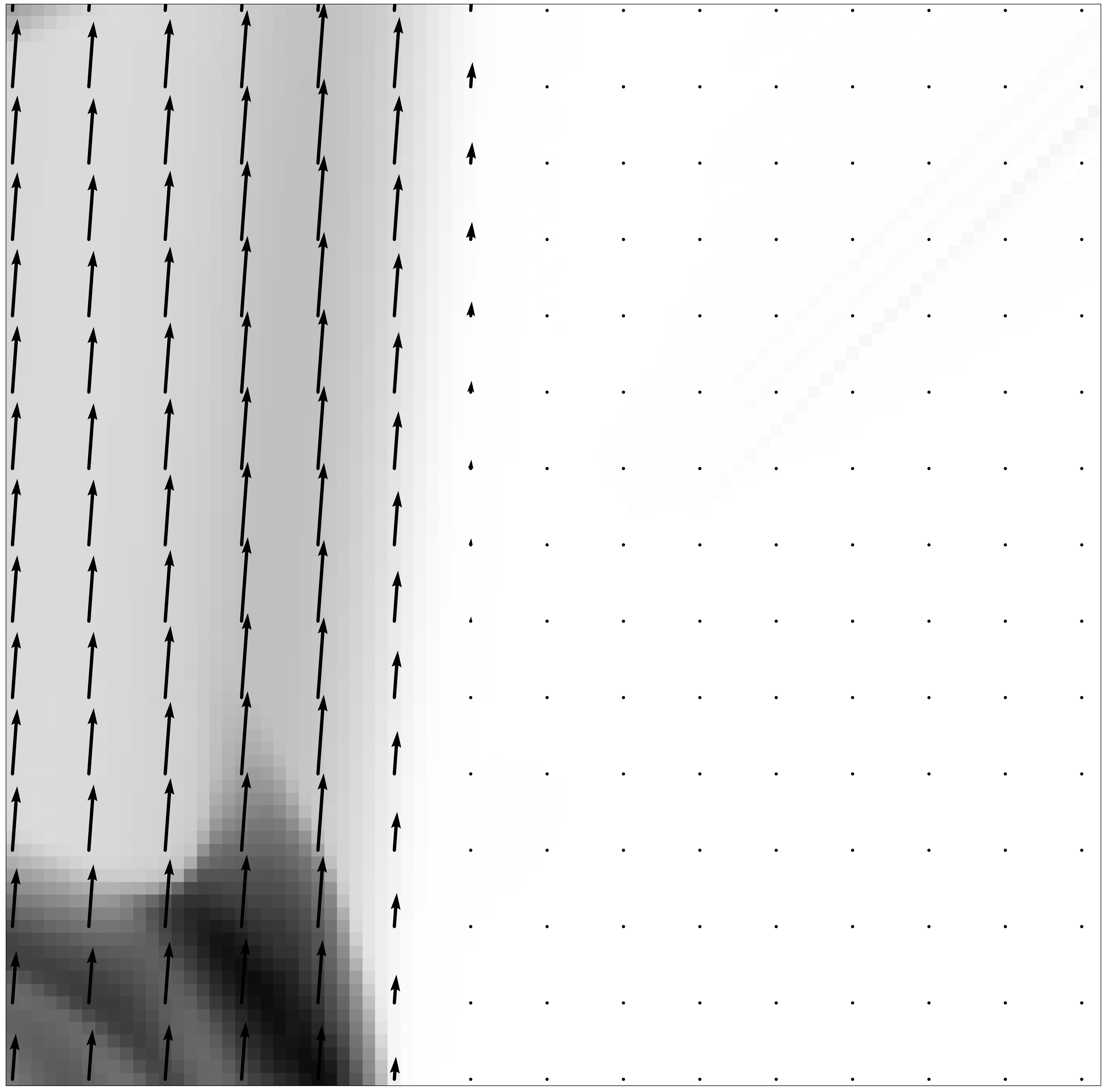}
	\caption{Zoom of the red square in (a), the arrows represent the transport flow in the corresponding point.}
	\end{subfigure}
	\caption{An approximate Wasserstein median (blue) of three sample measures (black) and the three approximately optimal transport densities (in gray) computed via Douglas--Rachford, with step-size $\tau = 10^{-1}$ and relaxation parameters $\theta_k = 1$ for all $k\in \mathbb{N}$, on a $420\times 420$ grid. The figure shows the results after $10000$ iteration, with a residual of $7 \cdot 10^{-8}$.}
	\label{fig:beckmann}
    \end{figure}

\subsection{A system of PDEs for  Wasserstein medians}

We now rewrite the Wasserstein median problem \eqref{eq:Medianprob} in terms of a multi-flow minimization:
\begin{equation}\label{eq:Beckmannform}
\inf_{(\sigma_1, \dots, \sigma_N, \nu) \in \mathcal{M}_\div(\Omb,\mathbb{R}^d)^N\times\mathcal{P}(\Omega)}  \bigg\{\sum_{j=1}^N \lambda_j  \vert \sigma_j \vert (\Omega): \nabla \cdot \sigma_j+\nu_j = \nu, \; j=1, \dots, N \bigg\},
\end{equation}
and observe that $\nu$ solves \eqref{eq:Medianprob} if and only if there exist $(\sigma_1, \dots, \sigma_N) \in \mathcal{M}_\div(\Omb,\mathbb{R}^d)^N$ such that $(\sigma_1, \dots, \sigma_N, \nu)$ solves \eqref{eq:Beckmannform}. Since we have assumed $\lambda_i>0$ we can perform the change of unknown $u_i \to u_i/\lambda_i$ in \eqref{eq:DualLip} and rewrite it as 
\begin{equation}\label{eq:Dual1Lip}
	\sup \biggl\{ \sum_{i=1}^{N} \lambda_i\int_{\Omega} u_i \d \nu_i  \; : \;   u_i \in \Lip_{1}(\Omega), \; i=1, \dots, N,  \;  \sum_{i=1}^{N}\lambda_i u_i\leq 0 \biggr\}. 
	\end{equation}
We may deduce from what we have recalled in the previous paragraph, a characterization of Wasserstein medians as well as optimal flows in \eqref{eq:Beckmannform} and optimal potentials in \eqref{eq:Dual1Lip} by a system of PDEs of Monge--Kantorovich type. Let us emphasize that a similar system of PDEs was derived in \cite{igbida} in a slightly different matching setting where there are two sample measures but also an additional capacity constraint.  Note that if  a median $\nu \in \Medl(\nu_1, \dots, \nu_N)$ was known, the problem of finding the corresponding optimal flows would  be decoupled into $N$ Monge--Kantorovich PDEs in the sense of definition \ref{mksystemnuinu}, but to determine $\nu$, we should  take into account the obstacle constraint $\sum_{i=1}^N \lambda_i u_i \leq 0$ from \eqref{eq:Dual1Lip} and the optimality condition from Theorem \ref{PrimalDualOptimalityConditions} that requires $\sum_{i=1}^N \lambda_i u_i $ to vanish on the support of $\nu$. All this can be summarized as:

\begin{theorem}[\textbf{A Monge--Kantorovich system of PDEs for medians}]
Let $\nu\in \cP(\Omega)$ then $\nu \in \Medl(\nu_1, \dots, \nu_N)$ if and only if there exist $(u_1, \dots, u_N)\in \Lip_1(\Omega)^N$ and $(\rho_1, \dots, \rho_N)\in \mathcal{M}_+(\Omega)^N$ such that, for  $i=1, \dots, N$
\begin{equation}\label{systemmkpdemedian}
\nabla \cdot (\rho_i \nabla_{\rho_i} u_i)+ \nu_i=\nu, \; \vert \nabla_{\rho_i} u_i \vert =1 \mbox{ $\rho_i$-a.e.,}
\end{equation}
coupled with the obstacle conditions
\begin{equation}\label{couplingobstacle}
 \sum_{i=1}^N \lambda_i u_i \leq 0 \; \mbox{ on $\Omega$}, \;   \sum_{i=1}^N \lambda_i u_i= 0 \;  \mbox{ on $\spt(\nu)$}
\end{equation}
Moreover in this case $(u_1,\dots, u_N)$ solves \eqref{eq:Dual1Lip} and $(\rho_1 \nabla_{\rho_1} u_1, \dots, \rho_N \nabla_{\rho_N} u_N, \nu)$ solves \eqref{eq:Beckmannform}. 

\end{theorem}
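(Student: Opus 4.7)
The plan is to package the already-established primal-dual characterization of Wasserstein medians (Theorem \ref{PrimalDualOptimalityConditions}) together with the pairwise equivalence, recalled just before the theorem, between optimal flows in \eqref{beckmannnuinu}, Kantorovich potentials in \eqref{summaryduality}, and solutions of the Monge--Kantorovich PDE system \eqref{mksysnuinu}. Essentially no new analytic work beyond this is needed; the whole point is to decouple the problem into $N$ Monge--Kantorovich PDEs glued together by the obstacle condition $\sum_i \lambda_i u_i \leq 0$ that singles out medians among arbitrary target measures.

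For the direct implication, assume $\nu \in \Medl(\nu_1, \dots, \nu_N)$. Theorem \ref{PrimalDualOptimalityConditions} (after rescaling $\psi_i\mapsto \psi_i$ to match the $\Lip_1$ normalization used in \eqref{eq:Dual1Lip}) yields $u_1, \dots, u_N \in \Lip_1(\Omega)$ such that each $u_i$ is a Kantorovich potential between $\nu_i$ and $\nu$ and the obstacle conditions \eqref{couplingobstacle} are satisfied. For each $i$, Beckmann's problem \eqref{beckmannnuinu} between $\nu_i$ and $\nu$ admits a minimizer $\sigma_i$; setting $\rho_i := \vert \sigma_i \vert$ and $\hat\sigma_i$ so that $\d\sigma_i = \hat\sigma_i \d\rho_i$, the discussion following Definition \ref{mksystemnuinu} shows precisely that $(u_i, \rho_i)$ solves \eqref{systemmkpdemedian} with $\nabla_{\rho_i} u_i = \hat\sigma_i$, so that $\rho_i \nabla_{\rho_i} u_i = \sigma_i$.

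For the converse, assume we are given $(u_i, \rho_i)_i$ satisfying \eqref{systemmkpdemedian} and \eqref{couplingobstacle}. The forward part of the Monge--Kantorovich/Beckmann equivalence recalled in the preceding subsection tells us that each $u_i$ is a Kantorovich potential between $\nu_i$ and $\nu$, while $\sigma_i := \rho_i \nabla_{\rho_i} u_i$ is an optimal flow with $\vert \sigma_i \vert(\Omega) = \rho_i(\Omega) = W_1(\nu_i, \nu)$. Combined with \eqref{couplingobstacle}, this is exactly the list of conditions in Theorem \ref{PrimalDualOptimalityConditions}, hence $\nu \in \Medl(\bnu)$. The ``moreover'' part follows by summing:
\[
\sum_{i=1}^N \lambda_i W_1(\nu_i,\nu) = \sum_{i=1}^N \lambda_i\Big(\int_\Omega u_i\, \d\nu_i -\int_\Omega u_i\, \d\nu\Big) = \sum_{i=1}^N \lambda_i \int_\Omega u_i\, \d\nu_i,
\]
using the obstacle equality on $\spt(\nu)$, so $(u_1,\dots,u_N)$ attains the supremum in \eqref{eq:Dual1Lip}; similarly $\sum_i \lambda_i \vert \sigma_i\vert(\Omega) = \sum_i \lambda_i W_1(\nu_i,\nu) = v(\bl,\bnu)$, so the admissible tuple $(\sigma_1,\dots,\sigma_N,\nu)$ attains the infimum in \eqref{eq:Beckmannform}.

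There is really no serious obstacle here: everything rests on the Beckmann--Monge--Kantorovich equivalence already recalled in detail. The only point requiring minor care is the interpretation of $\nabla_{\rho_i} u_i$ when $\rho_i$ is singular with respect to Lebesgue measure, but this is exactly what the approximation definition in Definition \ref{mksystemnuinu} handles, and was used in both directions of the argument above.
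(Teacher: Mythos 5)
Your proof is correct and follows exactly the route the paper intends: the paper states this theorem as a summary of the discussion in the preceding subsection (the equivalence, for each fixed target $\nu$, between Kantorovich potentials, optimal Beckmann flows, and solutions of the Monge--Kantorovich system \eqref{mksysnuinu}) combined with Theorem \ref{PrimalDualOptimalityConditions}, and gives no separate proof. Your write-up supplies precisely those glue steps — invoking the pairwise equivalence in both directions, then Theorem \ref{PrimalDualOptimalityConditions} for the obstacle coupling, and finishing the ``moreover'' clause by summing the $N$ duality identities and comparing with Proposition \ref{Lipformdual} and with the value of \eqref{eq:Beckmannform} — so it is a faithful elaboration of the argument the paper had in mind.
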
\label{MKsystemmedian}

\begin{remark}[\textbf{$d=1$, $\Omega$ is an interval}] In dimension $1$, one can integrate the equation $\nabla \cdot \sigma_i +\nu_i=\nu$ and in this case, \eqref{eq:Beckmannform} appears as the vertical formulation of the median problem \eqref{eq:one_dim_v}. One can therefore interpret \eqref{eq:Beckmannform} in higher dimensions as a multidimensional extension of \eqref{eq:one_dim_v}.
\end{remark}

\begin{remark}[\textbf{Case of absolutely continuous sample measures}] If $\nu_i$ is in $L^1(\Omega)$, then the corresponding optimal flow $\sigma_i$ and transport density  $\rho_i$ are also in $L^1(\Omega)$ (even though medians need not be absolutely continuous) and one can replace the tangential gradient $\nabla_{\rho_i} u_i$ by $\nabla u_i$ in the Monge--Kantorovich PDE \eqref{systemmkpdemedian}.
\end{remark}

\begin{remark}[\textbf{Connection with the multi-marginal formulation}] If $\theta$ solves the multimarginal problem \eqref{eq:MultMargwMed}, then we know that $\nu:={\pi_0}_\#\theta$ is a median and we can recover the corresponding flows as in \eqref{defdesigggam} i.e.~by defining:
\[\int_\Omega \phi \cdot \d \sigma^\theta_{i}=\int_{\Omega^{N+1}} \int_0^1 \phi(x+t(x_i-x)) \cdot (x_i-x) \; \d t  \; \d  \theta(x, x_1, \dots, x_N), \; \text{for all} \ \phi \in C(\Omega, \R^d),\]
with this construction $(\sigma^\theta_1, \dots, \sigma^\theta_N, {\pi_0}_\#\theta)$ is a solution of \eqref{eq:Beckmannform}. In fact, invoking Theorem 4.13 in \cite{santambrogio_optimal_2015}, any solution of \eqref{eq:Beckmannform} can be obtained in this way from an optimal multi marginal plan $\theta$.
\end{remark}

\subsection{Approximation by a system of $p$-Laplace equations}

We shall now see how to approximate a median, as well as dual potentials and Beckmann flows by a  single system of $p$-Laplace equations (with $p$ large as in the seminal work of Evans and Gangbo \cite{EvansGangbo}, also see \cite{MazonRossiToledo} for a similar strategy for a matching problem involving two sample measures). Given $\epsilon>0$, we are given an exponent $\peps \geq 2d$, and assume these  exponents satisfy
\begin{equation}\label{pepsgrand}
\lim_{\epsilon \to 0^+} \peps=+\infty.
\end{equation}
We then consider the functional, $\Jeps$ defined for $u=(u_1, \ldots, u_N) \in W^{1, \peps}(\Omega)^N$ by
\[J_\epsilon(u):=\frac{1}{\peps} \sum_{i=1}^N \int_{\Omega} \vert \nabla u_i\vert^{\peps}+ \frac{1}{2 \epsilon} \int_{\Omega} \Big(\sum_{j=1}^N \lambda_j u_j\Big)_+^2 -\sum_{i=1}^N \lambda_i \int_\Omega u_i \d \nu_i,\]
observing that $\Jeps(u)=\Jeps(u+\alpha)$ if $\alpha_i$'s are constants that sum to $0$, we can add the normalizing constraint
\begin{equation}\label{normalizu}
\int_{\Omega} u_i =0, \; i=1, \ldots, N-1.
\end{equation}
With this normalization at hand we can prove the following.
\begin{prop}
Let $\epsilon > 0,$ $\peps > d$,  then 
$$\inf_{u \in W^{1, \peps}(\Omega)^N} J_\epsilon(u)$$
admits a unique solution which satisfies the normalization \eqref{normalizu}.
\end{prop}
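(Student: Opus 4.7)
The plan is to apply the direct method on the closed affine subspace
\[
V := \Big\{ u \in W^{1,\peps}(\Omega)^N \; : \; \int_\Omega u_i = 0, \ i=1,\dots,N-1 \Big\},
\]
then to exploit strict convexity to isolate the unique minimizer. First I would record that $\Jeps$ is invariant under adding a constant vector $\alpha = (\alpha_1,\dots,\alpha_N) \in \R^N$ satisfying $\sum_i \lambda_i \alpha_i = 0$: both $\sum_j \lambda_j u_j$ and $\sum_i \lambda_i \int u_i \d \nu_i$ are unchanged, since the $\nu_i$ are probability measures. For any $u \in W^{1,\peps}(\Omega)^N$ there is exactly one such $\alpha$ putting $u + \alpha$ into $V$, so $\inf_{W^{1,\peps}(\Omega)^N} \Jeps = \inf_V \Jeps$ and the unique minimizer in $V$ will coincide with the unique minimizer satisfying \eqref{normalizu}.

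The principal step is coercivity on $V$. For $i<N$, the zero-mean normalization combined with Poincaré--Wirtinger yields $\|u_i\|_{W^{1,\peps}} \leq C \|\nabla u_i\|_{L^{\peps}}$. For $u_N$ I would split $u_N = v_N + \bar{u}_N$ with $\int v_N = 0$. Since $\peps > d$, the embedding $W^{1,\peps}(\Omega) \hookrightarrow C(\Omb)$ together with Poincaré--Wirtinger gives $\|v_N\|_\infty + \sum_{i<N} \|u_i\|_\infty \leq C \sum_i \|\nabla u_i\|_{L^{\peps}}$, so the linear part $\sum_i \lambda_i \int u_i \d\nu_i$ is bounded above by $C \sum_i \|\nabla u_i\|_{L^{\peps}} + \lambda_N \bar{u}_N$; the Sobolev piece is absorbed into $\frac{1}{\peps}\sum_i \int|\nabla u_i|^{\peps}$ by Young. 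It remains to control the mean $\bar u_N$. If $\bar{u}_N \to -\infty$, then $-\lambda_N \bar{u}_N \to +\infty$ directly. If $\bar{u}_N \to +\infty$, write $\sum_j \lambda_j u_j = W + \lambda_N \bar{u}_N$ with $W := \sum_{j<N} \lambda_j u_j + \lambda_N v_N$ of zero mean; since $\int_\Omega (\sum_j \lambda_j u_j) = \lambda_N \bar{u}_N |\Omega| \geq 0$, Cauchy--Schwarz yields
\[
\int_\Omega \Big(\sum_j \lambda_j u_j\Big)_+^2 \; \geq \; \frac{1}{|\Omega|} \bigg(\int_\Omega \Big(\sum_j \lambda_j u_j\Big)_+\bigg)^2 \; \geq \; \lambda_N^2 |\Omega| \, \bar{u}_N^2,
\]
which dominates $-\lambda_N \bar{u}_N$ via the penalty term. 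Combining all estimates closes coercivity on $V$.

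Convexity and continuity of $\Jeps$ on the reflexive space $W^{1,\peps}(\Omega)^N$ (note $\peps > 1$) give weak lower semicontinuity, so the direct method produces a minimizer $u \in V$. For uniqueness, let $u, \tilde u \in V$ be two minimizers; strict convexity of $z \mapsto |z|^{\peps}$ applied to the Dirichlet term (the penalty and linear pieces are convex) forces $\nabla u_i = \nabla \tilde u_i$ a.e.\ for every $i$; combined with the zero-mean constraint this gives $u_i = \tilde u_i$ for $i<N$, while $\tilde u_N = u_N + c$ for some $c \in \R$. The one-dimensional convex function $t \mapsto \Jeps(u_1,\dots,u_{N-1}, u_N+t)$ then attains its minimum both at $t=0$ and at $t=c$, hence is constant on the segment joining them. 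The optimality relation
\[
\frac{\lambda_N}{\epsilon} \int_\Omega \Big(\sum_j \lambda_j u_j + \lambda_N t\Big)_+ \d x = \lambda_N
\]
must therefore hold throughout this segment. At $t=0$ it forces $\int (\sum_j \lambda_j u_j)_+ = \epsilon > 0$, so $|\{\sum_j \lambda_j u_j > 0\}| > 0$, and the derivative of $t \mapsto \int (\sum_j \lambda_j u_j + \lambda_N t)_+$ at $t=0$ equals $\lambda_N |\{\sum_j \lambda_j u_j > 0\}| > 0$. The integral is thus strictly increasing near $0$ and cannot be constant on any interval touching $0$, so $c=0$ and $u=\tilde u$.

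The main obstacle is coercivity along the scalar $\bar{u}_N$: the quadratic penalty only controls positive excursions of $\sum_j \lambda_j u_j$ while the linear term controls the negative direction, so the two regimes have to be treated separately and balanced. Everything else (Poincaré--Wirtinger, the Sobolev embedding, Young, convexity and weak lower semicontinuity) is standard.
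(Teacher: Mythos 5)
Your proof is correct and follows the same broad strategy as the paper (direct method on the normalized subspace plus strict convexity of the $\peps$-Dirichlet term), but the two key steps are handled by genuinely different arguments. For coercivity in the mean $\bar u_N = \fint_\Omega u_N$, the paper extracts from \eqref{eq:lowerbdJ} an $L^\infty$ bound of order $(a_N^n)^{1/\peps}$ on the remaining components and argues by contradiction that the penalty grows like $(a_N^n)^2$ while the linear loss is only $O(a_N^n)$. You instead observe that $\int_\Omega \sum_j \lambda_j u_j = \lambda_N \bar u_N |\Omega|$, use $(t)_+ \geq t$ and Cauchy--Schwarz to get the direct quadratic lower bound $\frac{\lambda_N^2 |\Omega|}{2\epsilon}\bar u_N^2$ on the penalty when $\bar u_N \geq 0$; this is shorter, entirely elementary and avoids the sub-linear growth bookkeeping. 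For uniqueness, after concluding $\nabla u_i = \nabla \tilde u_i$, the paper appeals to ``strict convexity of $\cdot^2$'' to equate the penalty integrands; this is a little loose since the integrand $s \mapsto (s_+)^2$ is affine on $s\le 0$, hence not strictly convex, and the equality of positive parts needs a case analysis. Your route — restricting to the scalar line $t\mapsto J_\epsilon(u_1,\dots,u_{N-1},u_N+t)$, noting the convex function is constant on the segment $[0,c]$, deriving the first-order relation $\int_\Omega (\sum_j \lambda_j u_j + \lambda_N t)_+ = \epsilon$ there, and observing the left side has strictly positive derivative at $t=0$ because the positive set has positive measure — is fully rigorous and self-contained. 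You also correctly point out that the invariance group of $J_\epsilon$ is $\{\alpha : \sum_i \lambda_i \alpha_i = 0\}$, whereas the paper writes ``$\alpha_i$'s that sum to $0$'', which is only equivalent for uniform weights; with the corrected invariance the reduction to the normalized subspace $V$ is cleanly justified.
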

\begin{proof}

\textbf{Existence.} First note that  for $i=1,\dots,N-1$, ${u_i \in W^{1,\peps}(\Omega)}$ with $\int_\Omega u_i=0$, using  successively Poincaré--Wirtinger's, Morrey's and Young's inequalities, we have 
\begin{align*}
&\int_{\Omega} \vert \nabla u_i\vert^{\peps} - \lambda_i \int_\Omega u_i \d \nu_i \\
\geq~ & \frac{C_\epsilon}{2} \| u_i \|^{\peps}_{W^{1,\peps}(\Omega)} - \lambda_i \| u_i \|_{L^\infty(\Omega)} \\
\geq~ & \frac{C_\epsilon}{4} \| u_i \|^{\peps}_{W^{1,\peps}(\Omega)} +C'_\epsilon \| u_i \|^{\peps}_{L^\infty(\Omega)} - \frac{\delta}{\peps} \| u_i \|_{L^\infty(\Omega)}^{\peps} - \frac{1}{\delta^{\frac{q}{\peps}}q}(\lambda_i)^q,
\end{align*}
where $C_\epsilon$, $C'_\epsilon >0$ are constants (independent of $u_i$), $\delta > 0$ and $q = \frac{\peps}{\peps-1}$ the conjugate exponent. 

To treat the $N$-th component, let $u_N \in W^{1,\peps}(\Omega)$ and define $a_N := \fint_\Omega u_N \d x$, then similarly as before
\begin{align*}
&\int_{\Omega} \vert \nabla u_N\vert^{\peps} - \lambda_N \int_\Omega u_N \d\nu_N \\
\geq~ & \frac{C_\epsilon}{2} \| u_N - a_N \|^{\peps}_{W^{1,\peps}(\Omega)} - \lambda_N \| u_N - a_N \|_{L^\infty(\Omega)} - \lambda_N a_N \\
\geq~ & \frac{C_\epsilon}{4} \| u_N - a_N \|^{\peps}_{W^{1,\peps}(\Omega)} +C'_\epsilon \| u_N - a_N \|^{\peps}_{L^\infty(\Omega)} - \frac{\delta}{\peps} \| u_N - a_N\|_{L^\infty(\Omega)}^{\peps} - \frac{1}{\delta^{\frac{q}{\peps}}q}(\lambda_N)^q - \lambda_N a_N.
\end{align*}
By choosing $\delta >0$ small enough, we obtain altogether
\begin{equation}\label{eq:lowerbdJ}
J_\epsilon(u) \geq \frac{C_\epsilon}{4}\sum_{i=1}^{N-1} \| u_i \|^{\peps}_{W^{1,\peps}(\Omega)} + \frac{C_\epsilon}{4} \| u_N - a_N \|^{\peps}_{W^{1,\peps}(\Omega)} + C - \lambda_N a_N + \frac{1}{2 \epsilon} \int_{\Omega} \Big(\sum_{j=1}^N \lambda_j u_j\Big)_+^2,
\end{equation}
where $C$ is a constant only depending on $\peps,$ $\lambda_i$ ($i=1,\dots,N$) and $C'_\epsilon$.

Now let $(u^n)_{n \in \N}=(u_1^n,\dots,u_N^n)_{n \in \N} \in \left( W^{1,\peps}(\Omega)^N \right)^\N$ be a minimizing sequence of $J_\epsilon$ satifying our normalization. In order to conclude that $(u^n)_{n \in \N}$ is bounded in $W^{1,\peps}(\Omega)$, it is enough to find an upper bound on  $a_N^n = \fint_\Omega u_N^n \d x $. Assume by contradiction, that (up to a not relabeled subsequence) $a_N^n \rightarrow + \infty$ as $n \rightarrow \infty$, then, by \eqref{eq:lowerbdJ} there are constants $K,~ \tilde C_\epsilon >0$ (independent of $n$) such that for $i=1,\dots, N-1$
\begin{equation}
\left(\frac{K + \lambda_N a_N^n}{\tilde C_\epsilon}\right)^\frac{1}{\peps} \geq \| u_i^n \|_{L^\infty(\Omega)}, \text{ and } \left(\frac{K + \lambda_N a_N^n}{\tilde C_\epsilon}\right)^\frac{1}{\peps} \geq \| u_N^n - a_N^n\|_{L^\infty(\Omega)},
\end{equation}
for all $n \in \N$.
But then, denoting $K^n_\epsilon := \left(\frac{K + \lambda_N a_N^n}{\tilde C_\epsilon}\right)^\frac{1}{\peps}$
\begin{align*}
&\frac{1}{2 \epsilon} \int_{\Omega} \Big(\sum_{j=1}^N \lambda_j u_j\Big)_+^2 - \lambda_N a^n_N  \\
\geq ~ & \frac{1}{2 \epsilon} \int_{\Omega} \left(\lambda_N a^n_N - K^n_\epsilon \right)_+^2 - \lambda_N a^n_N \\
\geq ~ & \frac{1}{2 \epsilon} \int_{\Omega} \left(\lambda_N a^n_N(1 - o(1)) \right)_+^2 - \lambda_N a^n_N \rightarrow + \infty \quad \text{as } n \rightarrow \infty,
\end{align*}
contradicting $(u^n)_{n \in \N}$ being a minimizing sequence.
This implies that $(a_N^n)_{n \in \N}$ is bounded hence $(u_N^n)_{n \in \N}$ is bounded in $W^{1,\peps}(\Omega)$. Since $(u_i^n)_{n \in \N}$ is bounded in $W^{1,\peps}(\Omega)$ for $i=1, \dots, N$, it has a subsequence that converges weakly in $W^{1,\peps}(\Omega)$, by the weak lower semi continuity of $J_\eps$,  the weak limit of this subsequence is indeed a minimizer of $J_\eps$.

\smallskip

\textbf{Uniqueness.} Let $u, \bar{u}$ be minimizers of $J_\epsilon$. Then by strict convexity of $|\cdot|^{\peps}$ and $\cdot^2$ we have
\begin{align*}
\nabla u_i &= \nabla \bar{u_i} \quad \cL^d\text{-a.e.~for } i = 1,\dots,N, \\
\Big(\sum_{j=1}^N \lambda_j u_j\Big)_+^2 &= \Big(\sum_{j=1}^N \lambda_j \bar u_j\Big)_+^2 \quad \cL^d\text{-a.e.}
\end{align*}
By the normalization \eqref{normalizu} we then get $u_i = \bar u_i$ for $i=1,\dots,N-1$, and there is $c_N \in \R$ such that $u_N = \bar u_N + c_N$. But then
\begin{align*}
0 = J_\epsilon(u) - J_\epsilon(\bar{u}) =\lambda_N \int_\Omega u_N \d \nu_N - \lambda_N \int_\Omega (u_N - c_N)\d \nu_N = \lambda_N c_N,
\end{align*}
which is only possible if $c_N = 0$.
\end{proof}
The unique minimizer of $J_\eps$ under the normalization \eqref{normalizu},  $u^\epsilon=(u_1^\epsilon, \ldots, u_N^\epsilon)$, is characterized by the system of PDEs
\begin{equation}\label{plaplacesystem}
-\nabla \cdot \Big(\vert \nabla u_i^\epsilon \vert^{\peps-2} \nabla u_i^\epsilon \Big) + \lambda_i \Big(\frac{ \sum_{j=1}^N \lambda_j \ueps_j}{\epsilon } \Big)_+=\lambda_i \nu_i, \; i=1, \ldots, N
\end{equation}
with Neumann boundary conditions, in the weak sense which means that, for every $i$ and every $\varphi \in W^{1, \peps}(\Omega)$, one has
\[\int_{\Omega} \vert \nabla u_i^\epsilon \vert^{\peps-2} \nabla u_i^\epsilon \cdot \nabla \varphi+   \lambda_i \int_{\Omega} \Big(\frac{ \sum_{j=1}^N \lambda_j \ueps_j}{\epsilon } \Big)_+ \varphi =  \lambda_i  \int_{\Omega} \varphi \d \nu_i,\]
of course, supplemented by the normalization \eqref{normalizu}.  To shorten notations and for further use, let us define
\begin{equation}\label{defapproxeps}
\sigeps_i:= \frac{ \vert \nabla u_i^\epsilon \vert^{\peps-2} \nabla u_i^\epsilon}{\lambda_i}, \; \neps:=\frac{1}{\epsilon} \Big( \sum_{j=1}^N \lambda_j \ueps_j\Big)_+.
\end{equation}
So that the optimality system \eqref{plaplacesystem} can be rewritten as
\begin{equation}\label{fluxeqeps}
-\nabla \cdot \sigeps_i + \neps=\nu_i, \; i=1, \ldots, N.
\end{equation}
In particular (testing the $N$-th equation against a constant) $\neps$ which is a nonnegative continuous (at least 1/2-H\"older when $\peps\geq 2d$) function, is a probability density on $\Omega$.

Then we have, the following convergence result:

\begin{prop}
Up to extracting a vanishing  (not explicitly written) sequence $\varepsilon_n \to 0$ as $n\to \infty$, one may assume that 
\begin{itemize}

\item $(\ueps)_{\epsilon>0}$ converges  uniformly  to some $u=(u_1, \ldots, u_N)$  which is a vector of optimal dual potentials, i.e.~solves \eqref{eq:Dual1Lip},

\item for each $i$, $(\sigeps_i)_{\epsilon>0}$ converges weakly $*$ to some vector-valued measure $\sigma_i$,  $(\neps)_{\epsilon>0}$  converges weakly $*$ to some probability measure $\nu$ and $(\sigma_1, \ldots, \sigma_N, \nu)$  solves the Beckmann problem \eqref{eq:Beckmannform}. In particular, $\nu$  is a Wasserstein median.

\end{itemize}

\end{prop}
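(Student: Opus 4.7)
The plan is to combine an energy comparison of $\Jeps(\ueps)$ with $\Jeps$ evaluated at an optimal competitor for the dual problem \eqref{eq:Dual1Lip}, the Euler--Lagrange identity obtained by testing \eqref{plaplacesystem} against $\ueps$ itself, and standard compactness arguments, to first pass to the limit in $\ueps$ and then in the flux equation \eqref{fluxeqeps}. Let $u^*=(u_1^*,\dots,u_N^*)$ be an optimal solution of \eqref{eq:Dual1Lip} (which exists by Proposition~\ref{Lipformdual}); adding a suitable vector of constants summing to zero does not change $\Jeps$, so one may assume $\int_\Omega u_i^*=0$ for $i<N$ so that $u^*$ satisfies the normalization \eqref{normalizu}. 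Using $|\nabla u_i^*|\le 1$ and $\sum_i\lambda_i u_i^*\le 0$ on $\Omega$ yields $\Jeps(u^*)\le N|\Omega|/\peps-v(\bl,\bnu)$, hence by minimality $\Jeps(\ueps)\le N|\Omega|/\peps-v(\bl,\bnu)$. Testing the $i$-th equation in \eqref{plaplacesystem} against $u_i^\eps$ and summing gives, since $\neps\cdot\sum_j\lambda_j u_j^\eps=\eps(\neps)^2$ pointwise, the key identity
\[\sum_{i=1}^N\int_\Omega|\nabla u_i^\eps|^{\peps}+\eps\int_\Omega(\neps)^2=\sum_{i=1}^N\lambda_i\int_\Omega u_i^\eps\,\mathrm{d}\nu_i,\]
which combined with the upper bound on $\Jeps(\ueps)$ yields the lower bound $\sum_i\lambda_i\int u_i^\eps\,\mathrm{d}\nu_i\ge v(\bl,\bnu)-N|\Omega|/\peps$.

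The second step is to establish a uniform $L^\infty$-bound on $\ueps$. For $i<N$, the normalization $\int u_i^\eps=0$, Hölder's inequality $\|\nabla u_i^\eps\|_{L^q}\le(\int|\nabla u_i^\eps|^{\peps})^{1/\peps}|\Omega|^{1/q-1/\peps}$ for a fixed $q>d$, combined with Morrey's embedding (whose constant stays $\peps$-uniform for $q>d$ fixed), give $\|u_i^\eps\|_{L^\infty}\le C\bigl(\sum_j\int|\nabla u_j^\eps|^\peps\bigr)^{1/\peps}$ with $C$ independent of $\peps$. Decomposing $u_N^\eps=a_N^\eps+\tilde u_N^\eps$ with $\tilde u_N^\eps$ of zero mean, $\tilde u_N^\eps$ obeys the same bound, while $a_N^\eps$ is controlled above by the $L^2$-smallness of $(\sum_j\lambda_j u_j^\eps)_+$ (which follows from $\eps\int(\neps)^2\le C$) and below by the lower bound on $\sum_i\lambda_i\int u_i^\eps\,\mathrm{d}\nu_i$ just derived. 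A short bootstrap, using $\sum_i\int|\nabla u_i^\eps|^\peps\le\sum_i\lambda_i\int u_i^\eps\,\mathrm{d}\nu_i\le\max_i\|u_i^\eps\|_{L^\infty}$, then yields $\|u_i^\eps\|_{L^\infty}\le C$ and $\sum_i\int|\nabla u_i^\eps|^\peps\le C$ uniformly. Arzelà--Ascoli then provides (along a subsequence) a uniform limit $u=(u_1,\dots,u_N)$; letting first $\eps\to 0$ and then $q\to\infty$ in $\limsup\|\nabla u_i^\eps\|_{L^q}\le|\Omega|^{1/q}$ shows $u_i\in\Lip_1(\Omega)$, and the vanishing of $\|(\sum_j\lambda_j u_j^\eps)_+\|_{L^2}^2=O(\eps)$ forces $\sum_i\lambda_i u_i\le 0$. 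Thus $u$ is admissible for \eqref{eq:Dual1Lip}, and passing to the limit in $\sum_i\lambda_i\int u_i^\eps\,\mathrm{d}\nu_i\ge v-N|\Omega|/\peps$ gives $\sum_i\lambda_i\int u_i\,\mathrm{d}\nu_i\ge v(\bl,\bnu)$, with equality by admissibility: $u$ solves \eqref{eq:Dual1Lip}.

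The third step is to pass to the limit in the fluxes and in $\neps$. Since $|\sigeps_i|=|\nabla u_i^\eps|^{\peps-1}/\lambda_i$ and Young's inequality $t^{\peps-1}\le\frac{\peps-1}{\peps}t^{\peps}+\frac{1}{\peps}$ gives
\[\lambda_i|\sigeps_i|(\Omega)\le\frac{\peps-1}{\peps}\int_\Omega|\nabla u_i^\eps|^{\peps}+\frac{|\Omega|}{\peps},\]
which is uniformly bounded by the previous step, Banach--Alaoglu provides (up to subsequence) a weak-$*$ limit $\sigma_i\in\cM(\Omega,\mathbb{R}^d)$. Similarly $\neps$ is a probability density on the compact set $\Omega$, hence tight, and $\neps\overset{*}{\rightharpoonup}\nu\in\cP(\Omega)$ along a further subsequence. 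Passing to the limit in the distributional identity $-\nabla\cdot\sigeps_i+\neps=\nu_i$ yields $-\nabla\cdot\sigma_i+\nu=\nu_i$, so $(\sigma_1,\dots,\sigma_N,\nu)$ is admissible for the Beckmann problem \eqref{eq:Beckmannform}. The lower semicontinuity of the total variation under weak-$*$ convergence, together with the bound above and the Euler--Lagrange identity, gives
\[\sum_{i=1}^N\lambda_i|\sigma_i|(\Omega)\le\liminf_{\eps\to 0}\Bigl[\frac{\peps-1}{\peps}\sum_{i=1}^N\lambda_i\int_\Omega u_i^\eps\,\mathrm{d}\nu_i+\frac{N|\Omega|}{\peps}\Bigr]=v(\bl,\bnu),\]
so $(\sigma_1,\dots,\sigma_N,\nu)$ solves \eqref{eq:Beckmannform} and $\nu$ is a Wasserstein median.

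The hard part is the uniform $L^\infty$-bound on $\ueps$: the classical Morrey embedding constant for $W^{1,\peps}\hookrightarrow L^\infty$ degenerates as $\peps\to\infty$, so one has to work with a fixed $L^q$-norm of the gradient for some $q>d$ and use the interpolation bound $\|\nabla u_i^\eps\|_{L^q}\le(\int|\nabla u_i^\eps|^\peps)^{1/\peps}|\Omega|^{1/q-1/\peps}$, and the self-consistent/bootstrap loop needed to simultaneously control $\|u_i^\eps\|_{L^\infty}$, $\sum_i\int|\nabla u_i^\eps|^\peps$ and $|a_N^\eps|$ requires care.
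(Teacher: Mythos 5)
Your proof follows essentially the same route as the paper's: test the Euler--Lagrange system against $u^\eps$ to get the energy identity, derive uniform $L^\infty$ bounds via Morrey's inequality and a control on the mean of $u_N^\eps$ through the penalty term, pass to a uniform limit $u$ and check admissibility ($1$-Lipschitz from the $L^q$ bounds, $\sum_j\lambda_j u_j\le 0$ from the penalty), extract weak-$*$ limits of $\sigeps_i$ and $\neps$, and close by matching the Beckmann value with the dual value. Two variants are worth noting. First, you introduce an a priori energy comparison $\Jeps(\ueps)\le \Jeps(u^*)\le N|\Omega|/\peps - v(\bl,\bnu)$ with a dual optimizer $u^*$, and use it to prove dual optimality of $u$ \emph{before} bounding the flows; the paper dispenses with this and recovers optimality at the end in one step via weak duality (showing directly $\sum_i\lambda_i\int u_i\,\d\nu_i\ge\sum_i\lambda_i|\sigma_i|(\Omega)$). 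Both work; yours is slightly more redundant but also slightly more self-contained. Second, to bound $\sum_i\lambda_i|\sigma_i|(\Omega)$ you use Young's inequality $t^{\peps-1}\le\frac{\peps-1}{\peps}t^{\peps}+\frac1\peps$, whereas the paper uses H\"older's inequality on $\int|\sigeps_i|^{\peps/(\peps-1)}$; these are two faces of the same estimate and give the same conclusion.

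One point deserves a correction in Step 2. As written, the upper bound on $a_N^\eps$ is attributed to ``the $L^2$-smallness of $(\sum_j\lambda_j u_j^\eps)_+$ (which follows from $\eps\int(\neps)^2\le C$)'', but the uniform constant $C$ in $\eps\int(\neps)^2\le C$ is itself a consequence of the uniform bound on $\sum_i\lambda_i\int u_i^\eps\,\d\nu_i$, which in turn needs the $L^\infty$ bound on $\ueps_N$; so this reasoning is circular. The clean way to close it (and what the paper does) is to first derive bounds on $\Vert\nabla\ueps_N\Vert_{L^{\peps}}$ and $\mathrm{osc}_{\Omb}(\ueps_N)$ from the $N$-th Euler--Lagrange identity, and then compare, in \eqref{estim112}, the quadratic lower bound $\frac{1}{\eps}\int(\sum_j\lambda_j\ueps_j)_+^2\ge\frac{|\Omega|}{\eps}(\lambda_N\max_{\Omb}\ueps_N-C')_+^2$ against the linear upper bound $C+\lambda_N\max_{\Omb}\ueps_N$; this forces $\max_{\Omb}\ueps_N$ (hence $a_N^\eps$) to stay bounded uniformly in $\eps$, with no circularity. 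You flag that this loop ``requires care'', which it does, but the version you describe would not close as stated.
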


\begin{proof}

{\textbf{Step 1: bounds on $\ueps$}.} Multiplying \eqref{plaplacesystem} by $\ueps_i$  first yields
\begin{equation}\label{estim111}
 \Vert \nabla \ueps_i\Vert_{L^{\peps}}^{\peps} + \lambda_i \int_{\Omega} \ueps_i \neps=\lambda_i \int_\Omega \ueps_i \d \nu_i, \; i=1, \ldots, N.
\end{equation}
Summing over $i$ we thus get
\begin{equation}\label{estim112}
\sum_{i=1}^N  \Vert \nabla \ueps_i\Vert_{L^{\peps}}^{\peps} + \frac{1}{\epsilon} \int_{\Omega}   \Big( \sum_{j=1}^N \lambda_j \ueps_j\Big)^2_+   =\sum_{i=1}^N\lambda_i \int_\Omega \ueps_i \d \nu_i.
\end{equation}
By Morrey's and H\"older's inequalities, $\peps \geq 2d$ and the fact that $\ueps_i$ has zero mean for $i=1, \ldots, N-1$, we have for  positive constant $C$  and $C'$ depending only on $\Omega$ (but possibly changing from one line to another)
\[ \Vert \ueps_i\Vert_{\infty} \leq C \Vert \nabla \ueps_i \Vert_{L^{2d}} \leq C \vert \Omega \vert^{\frac{1}{2d}-\frac{1}{\peps}} \Vert \nabla \ueps_i \Vert_{L^{\peps}} \leq C'  \Vert \nabla \ueps_i \Vert_{L^{\peps}},\]
which together with \eqref{estim111} and the fact that both $\neps$ and $\nu_i$ are probability measures gives 
\begin{equation}\label{estim113}
\max_{i=1, \ldots, N-1}  \Vert \ueps_i\Vert_{\infty} \leq C, \;  \max_{i=1, \ldots, N-1} \Vert \nabla \ueps_i\Vert_{L^{\peps}}^{\peps}  \leq C.
\end{equation}
Let us now get similar bounds on $\ueps_N$, using \eqref{estim111} with $i=N$ and using the fact that $\neps$ and $\nu_i$ are probability measures and then again Morrey's inequality (applied to $u_N - \fint_\Omega u_N$), we get 
\[ \Vert \nabla \ueps_N\Vert_{L^{\peps}}^{\peps} = \lambda_N \int (\ueps_N-\min_{\Omb} \ueps_N)(\nu_i -\neps) \leq \lambda_N \mathrm{osc} _{\Omb} ( \ueps_N) \leq C \Vert \nabla \ueps_N\Vert_{L^{\peps}},\]
which gives
\[ \Vert \nabla \ueps_N\Vert_{L^{\peps}}^{\peps} \leq C, \;  \mathrm{osc} _{\Omb} ( \ueps_N) \leq C.\]

With \eqref{estim113} and \eqref{estim112} and the bound on $\mathrm{osc} _{\Omb} ( \ueps_N)$, we thus get taking $C' \geq \sum_{i=1}^{N-1}\lambda_i \ueps_i$ 
\[ 0 \leq \frac{1}{\epsilon} \int_{\Omega}(\lambda_N \max_{\Omb} \ueps_N-C')_+^2 \leq C + \lambda_N \int \ueps_N \nu_N\leq C+ \lambda_N \max_{\Omb} \ueps_N\]
from which one readily deduces that $ \max_{\Omb} \ueps_N$ is bounded uniformly in $\epsilon$,  hence $(\ueps_N)_{\epsilon>0}$ is bounded in $L^\infty$ because of the bound on $\mathrm{osc} _{\Omb} ( \ueps_N)$. Finally, we have shown that 
\begin{equation}\label{estim114}
\max_{i=1, \ldots, N}  \Vert \ueps_i\Vert_{\infty} \leq C, \;  \max_{i=1, \ldots, N} \Vert \nabla \ueps_i\Vert_{L^{\peps}}^{\peps}  \leq C,
\end{equation}
which implies also $C^{0,\frac{1}{2}}$ bounds so extracting a vanishing  (not explicitly written) sequence $\varepsilon_n \to 0$ as $n\to \infty$, thanks to Ascoli--Arzel\'a's theorem, one may assume that $(u^{\epsilon})_{\epsilon>0}$ converges uniformly to some $u$ with $u\in W^{1,q}(\Omega)$ for every $q\in (1, +\infty)$. And since $(\nabla u^{\epsilon})_{\epsilon>0}$ is bounded in every $L^q$, we may also assume that  for every $q\in (1, +\infty)$,  $(\nabla u^{\epsilon})_{\epsilon>0}$ converges weakly to $\nabla u$ in $L^q(\Omega)$. Of course, we may also assume that $(\nu^{\epsilon})_{\epsilon>0}$ converges weakly $*$ to some probability measure $\nu$ and that the (bounded in $L^1$, thanks to \eqref{estim114} and the definition of   $\sigma_i^{\epsilon}$) sequence  $(\sigma_i^{\epsilon})_{\epsilon>0}$ converges weakly $*$ to some vector-valued measure $\sigma_i$.

\smallskip

{\textbf{Step 2: $u$ satisfies the constraints of the dual}.} By \eqref{estim112} and \eqref{estim114}, we have
\[ \int_{\Omega}   \Big( \sum_{j=1}^N \lambda_j \ueps_j\Big)^2_+ \leq  C\epsilon.\]
so that, letting $\epsilon \to 0^+$, we get
\[\sum_{j=1}^N \lambda_j u_j \leq 0.\]
Let us now prove that each $u_i$ is $1$-Lipschitz as a consequence of \eqref{estim114} and \eqref{pepsgrand}. First fix $q$ and let $\epsilon$ be small enough so that $\peps  \geq q$, then 
\[ \Vert \nabla \ueps_i \Vert_{L^q} \leq \vert \Omega \vert^{\frac{1}{q}-\frac{1}{\peps}} C^{\frac{1}{\peps}}.\]
So letting $\epsilon\to 0$, we get with  \eqref{pepsgrand}
\[\Vert \nabla u_i \Vert_{L^q} \leq  \vert \Omega \vert^{\frac{1}{q}}, \quad \text{for all} \ q\in (1,+\infty).\]
So letting now $q\to +\infty$ we obtain
\[\Vert \nabla u_i \Vert_{L^\infty} \leq  1\]
which implies that each $u_i$ is $1$-Lipschitz by convexity of $\Omega$.

\smallskip

{\textbf{Step 3: optimality of the limits}.} We already know that $\nu$ is a probability measure. Passing to the limit in \eqref{fluxeqeps}, we get 
\[-\nabla \cdot \sigma_i + \nu=\nu_i, \; i=1, \ldots, N,\]
which is the constraint in Beckmann problem \eqref{eq:Beckmannform}. Since $u$ is admissible in the dual, to conclude, by weak duality, it is enough to show that 
\begin{equation}\label{goal111}
\sum_{i=1}^N \lambda_i \vert \sigma_i \vert (\Omega) \leq \sum_{i=1}^N  \lambda_i \int_{\Omega} u_i \d \nu_i.
\end{equation}
First observe that \eqref{estim112} entails
\[\sum_{i=1}^N  \lambda_i \int \ueps_i \d \nu_i  \geq \sum_{i=1}^N  \int_{\Omega} \vert \nabla  \ueps_i\vert^{\peps}
 = \sum_{i=1}^N  \int_{\Omega} \vert \lambda_i \sigeps_i\vert^{\frac{\peps} {\peps-1} } .\]
Note then that, by H\"older's inequality we have 
\[ \int_{\Omega}   \vert  \sigeps_i\vert^{\frac{\peps} {\peps-1} } \geq  \Vert \sigeps_i\Vert_{L^1}^{\frac{\peps} {\peps-1} } \vert \Omega\vert^{-\frac{1}{\peps-1}} \]
so that 
\[ \liminf_{\epsilon\to 0^+}  \int_{\Omega}   \vert  \sigeps_i\vert^{\frac{\peps} {\peps-1} }\geq  \liminf_{\epsilon\to 0^+}   \Vert \sigeps_i\Vert_{L^1}   \Vert \sigeps_i\Vert_{L^1}^{\frac{1} {\peps-1} }    \geq \liminf_{\epsilon\to 0^+} \Vert \sigeps_i\Vert_{L^1} \geq  \vert \sigma_i \vert (\Omega)\]
where the second inequality is obtained by distinguishing the (obvious) case where  (after a suitable extraction) $(\sigeps_i)_{\epsilon>0}$ converges strongly to $0$ in $L^1$ and the case where   $\Vert \sigeps_i\Vert_{L^1}$ remains bounded away from $0$ and the last inequality follows from the weak $*$ convergence of $(\sigeps_i)_{\epsilon>0}$ to $\sigma_i$. We thus get
\[\sum_{i=1}^N  \lambda_i \int_{\Omega} u_i \nu_i = \liminf_{\epsilon\to 0^+} \sum_{i=1}^N  \lambda_i \int_{\Omega} \ueps_i \nu_i\geq  \liminf_{\epsilon\to 0^+} \sum_{i=1}^N  \lambda_i^{^{\frac{\peps} {\peps-1} }}  \int_{\Omega}   \vert  \sigeps_i\vert^{\frac{\peps} {\peps-1} }
\geq \sum_{i=1}^N \lambda_i \vert \sigma_i \vert(\Omega),
\]
which proves \eqref{goal111} and ends the proof. 
\end{proof}

\section{Numerics}\label{sec:numerics}

 In this section, we briefly mention the numerical methods we employed to generate the figures in the paper and present a new one based on a Douglas--Rachford scheme for the multi-flow formulation \eqref{eq:Beckmannform}. All the experiments are performed in Python on a Intel(R) Core(TM) i5-5200U CPU @ 2.20GHz and 8 Gb of RAM and are available for reproducibility at \url{https://github.com/TraDE-OPT/wasserstein-medians}.

  \subsection{Sorting, Linear Programming, Sinkhorn}
	
  Recall from Section \ref{sec:one_dim}, that in the one dimensional case, the Wasserstein median problem admits an almost-closed form solution, which can be computed directly with simple sorting procedures. We implemented these well-known schemes to generate Figure \ref{fig:one_dimensional_wm}. Here we rather focus on the case $\cX \subset \mathbb{R}^2$, which is more relevant e.g. for imaging.

  Wasserstein median problems on a fixed grid of size $n = p^2$ for a sample of size $N$ can be tackled either via Linear Programming methods, taking advantage of the minimum-cost flow nature of the problem \cite{lp2}, or via Sinkhorn-like methods on an entropy-regularized finite dimensional variant of \eqref{eq:MultMargwMed} \cite{cuturi2013SinkhornDistancesLightspeed},  see also \cite{bccmnp15, cuturi_fast_2014}. The latter represents the most popular approach. We employed the Sinkhorn method to generate Figure \ref{fig:robustness}. Despite their well-known advantages, Sinkhorn algorithm and entropic regularization methods can lead to severe computational issues, such as blurred outputs, important numerical instabilities, and  memory issues to store the so-called \emph{kernel matrix} \cite{sm_19}. It is worth mentioning that several efforts have been made to develop Sinkhorn-like methods that address these limitations, including log-space tricks for stability \cite{bccmnp15}, de-biased variants for blurring artifacts \cite{deb_bar_cuturi}, and truncation strategies for memory and speed improvements \cite{sm_19}.

	In the next paragraph,  we present a new approach which targets \eqref{eq:Beckmannform} and benefits from low memory requirements, fast convergence behaviour, and produces non-blurred approximate medians. Note, however, that this is approach, well-suited for Wasserstein medians, cannot be easily generalized to approximate Wasserstein barycenters.
	
	\subsection{Douglas--Rachford on the Beckmann formulation}
	
	Given a square domain $\Omega$, and $N\geq 2$ measures $(\nu_1, \dots, \nu_N) \in \mathcal{P}(\Omega)^N$, consider the Beckmann minimal flow formulation of the Wasserstein median problem  \eqref{eq:Beckmannform}. To discretize \eqref{eq:Beckmannform}, we introduce the square grid $\mathcal{G}_h:=\{hi \; : \; i=1,\dots, p\}^2$ with step-length $h:=1/p$, and the discrete spaces $\mathcal{M}_h:=\{\mu: \mathcal{G} \to \mathbb{R}\}$ and $\bm{\mathcal{S}}_h:=\{\boldsymbol{\sigma}: \mathcal{G}\to \mathbb{R}^2\}$. Note that $\mathcal{M}_h$ and $\bm{\mathcal{S}}_h$ are finite dimensional vector spaces which can be identified with $\mathbb{R}^n$ and $\mathbb{R}^{n\times 2}$, respectively, where $n :=p^2$. Thus, we often treat elements in $\mathcal{M}_h$ and $\bm{\mathcal{S}}_h$ as vectors. We consider the usual discretization of the gradient $\nabla_h: \mathcal{M}_h\to\bm{\mathcal{S}}_h$ defined via forward differences with homogeneous Neumann boundary conditions as in \cite[Section 6.1]{Chambolle2011}. The discrete divergence operator, which we denote by $\div_h = -\nabla_h^*$, is the opposite adjoint of $\nabla_h$, with respect to the scalar products $\langle \cdot, \cdot \rangle_{\mathcal{M}_h}$ and $\langle \cdot, \cdot \rangle_{\bm{\mathcal{S}}_h}$ (i.e.~the usual $\ell^2$ scalar products on $\mathbb{R}^n$ and $\R^{n\times 2}$, respectively). Now, let
	\begin{equation*}
	\mathcal{F}_h:=\bigl\{ (\boldsymbol{\sigma}_1, \dots, \boldsymbol{\sigma}_N, \nu)\in \bm{\mathcal{S}}_h^N\times \mathcal{M}_h \; : \; \div_h  \boldsymbol{\sigma}_k+\nu_k=\nu \ \text{for all} \ k= 1, \dots, N \bigr\},
	\end{equation*}
    where $(\nu_1, \dots, \nu_N)\in \mathcal{M}_h^N$ are suitable (not renamed) discretizations of $\nu_1, \dots, \nu_N$ on the grid $\mathcal{G}_h$. With this notation, let us consider the discretized version of \eqref{eq:Beckmannform}:
	\begin{equation}\label{eq:beckman_discrete_numerics}
	\min_{(\boldsymbol{\sigma}_1, \dots, \boldsymbol{\sigma}_N, \nu) \in \mathcal{F}_h} \ \sum_{k=1}^N \lambda_k \| \boldsymbol{\sigma}_k \|_{1,2} + \mathbb{I}_{\Delta}(\nu)
	\end{equation}
	Where $\Delta$ is the unit simplex, and $\|\cdot \|_{1,2}$ is the $\ell_{1,2}$ norm on $\bm{\mathcal{S}}_h$, also known as \emph{group-Lasso} penalty, which is defined for all $\boldsymbol{\sigma}\in \bm{\mathcal{S}}_h$ by $\|\boldsymbol{\sigma}\|_{1,2}:=\sum_{i=1}^n \|\boldsymbol{\sigma}(x_i)\|$, where $\|\cdot \|$ is the usual $\ell_2$ norm on $\mathbb{R}^n$. 
	
	To solve \eqref{eq:beckman_discrete_numerics}, we apply a Douglas--Rachford method to
	\begin{equation*}
		\min_{(\boldsymbol{\sigma}_1, \dots, \boldsymbol{\sigma}_N, \nu) \in \bm{\mathcal{S}}_h^N\times \mathcal{M}_h} \ \underbrace{\sum_{k=1}^N \lambda_k \| \boldsymbol{\sigma}_k \|_{1,2} + \mathbb{I}_{\Delta}(\nu)}_{:=g_1(\boldsymbol{\sigma}_1, \dots, \boldsymbol{\sigma}_N, \nu)}+\underbrace{\mathbb{I}_{\mathcal{F}_h}(\boldsymbol{\sigma}_1, \dots, \boldsymbol{\sigma}_N, \nu)\vphantom{\sum_{k=1}^n}}_{ :=g_2(\boldsymbol{\sigma}_1, \dots, \boldsymbol{\sigma}_N, \nu)}.
	\end{equation*}
	The Douglas--Rachford method \cite{douglas1956on, drs_mercier_lions} is an instance of the proximal point algorithm \cite{EcksteinBertsekas, bredies2021degenerate}, which can be employed to solve a minimization problem consisting of the sum of two convex lower semicontinuous functions which are accessible through evaluation of their proximity operators. In our case, the proximity operator of $g_1$, which is \emph{separable}, consists in a projection onto the unit simplex, denoted by $P_\Delta$, for the discrete measure $\nu$ and on the application of the proximity operator of the group-Lasso penalty, denoted by $\Shrink_\tau$, where $\tau >0$, on each component $\boldsymbol{\sigma}_i$, which can be computed in closed form \cite{Combettes2005}.
	
	The proximity operator of $g_2$, i.e.~the projection onto the affine subspace $\mathcal{F}_h$, is more delicate. Recall from optimality conditions that, formally, the projection onto the solution set of a linear system of the form $Ax=b$ is given, for all $y$, by $Py = y - A^*\xi$ where $\xi$ is any element that solves $AA^*\xi = Ay-b$. In our case, we have $b = -[\nu_1, \dots, \nu_N]^T$ and the linear operators $A$ and $AA^*$ can be written in block form as
	\begin{equation}\label{eq:numerics_A_and_AA*}
		A :=
		\begin{bmatrix}
		\div_h &        & & -I\\
		              & \ddots & & \vdots \\
		              &        & \div_h & -I
		\end{bmatrix},
		\quad 
		AA^* = 
		\begin{bmatrix}
		-\Delta_h+I &  I     &\cdots & I      \\
		       I    & \ddots &       & \vdots \\
	         \vdots	&        &       & 	I	  \\
	           I    &  I     & \cdots& -\Delta_h+I
		\end{bmatrix},
	\end{equation}
	where $\Delta_h:\mathcal{M}_h\to \mathcal{M}_h$ is the discrete Laplacian operator, namely $\Delta_h = \div_h\nabla_h$. 
	\begin{prop}\label{prop:projection_discrete}
		Let $\boldsymbol{\sigma} =(\boldsymbol{\sigma}_1, \dots, \boldsymbol{\sigma}_N) \in \bm{\mathcal{S}}_h^N$ and $\nu \in \mathcal{M}_h \cap \Delta$, and let $\nabla_h:\mathcal{M}_h\to \bm{\mathcal{S}}_h$ be the discrete gradient operator defined via forward differences with homogeneous Neumann boundary conditions. Then the projection $(\widetilde{\boldsymbol{\sigma}}, \widetilde{\nu})$  of $(\boldsymbol{\sigma}, \nu)$ onto $\mathcal{F}_h$ is given by
		\begin{equation*}
		\widetilde{\boldsymbol{\sigma}_i} := \boldsymbol{\sigma}_i+\nabla_h\xi_i, \quad \widetilde{\nu} := \nu+\xi_1+\cdots+\xi_N,
		\end{equation*}
		where $\displaystyle \xi_i := \xi_i'-(I-\tfrac{1}{N}\Delta_h)^{-1}\bigg(\frac{1}{N}\sum_{j=1}^{N}\xi_j'\bigg)$ and $\xi_i'$ is any solution to
		\begin{equation}\label{eq:def_xi_prime}
			-\Delta_h \xi_i' = \div_h\boldsymbol{\sigma}_i+\nu_i-\nu \quad \text{for all} \ i = 1, \dots, N.
		\end{equation}
	\end{prop}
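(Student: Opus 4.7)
The plan is to invoke the standard formula for the orthogonal projection onto an affine subspace: if $\mathcal{F}_h = \{y : Ay = b\}$ with $b = -(\nu_1, \ldots, \nu_N)^\top$ and $A$ as in \eqref{eq:numerics_A_and_AA*}, then the projection of $y = (\boldsymbol{\sigma}, \nu)$ satisfies $P_{\mathcal{F}_h}(y) = y - A^*\xi$ for any $\xi$ solving $AA^*\xi = Ay - b$. A direct computation using $\div_h = -\nabla_h^*$ shows that $A^*\xi = (-\nabla_h \xi_1, \ldots, -\nabla_h \xi_N, -\sum_i \xi_i)$, so the claimed formulas $\widetilde{\boldsymbol{\sigma}}_i = \boldsymbol{\sigma}_i + \nabla_h \xi_i$ and $\widetilde{\nu} = \nu + \sum_i \xi_i$ will follow once we exhibit a solution $\xi = (\xi_1, \ldots, \xi_N) \in \mathcal{M}_h^N$ of the coupled linear system
\begin{equation*}
-\Delta_h \xi_i + \sum_{j=1}^N \xi_j \,=\, \div_h \boldsymbol{\sigma}_i + \nu_i - \nu, \qquad i = 1, \ldots, N,
\end{equation*}
which is $AA^*\xi = Ay - b$ written componentwise using \eqref{eq:numerics_A_and_AA*}.

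The key algebraic step is a decoupling ansatz: write $\xi_i = \xi_i' - \eta$ for a common correction $\eta \in \mathcal{M}_h$, where each $\xi_i'$ solves the decoupled Neumann--Poisson problem \eqref{eq:def_xi_prime}. Substituting this ansatz into the coupled system and using \eqref{eq:def_xi_prime} collapses the $N$ equations into the single scalar equation $(NI - \Delta_h)\eta = \sum_j \xi_j'$ on $\eta$. Since $-\Delta_h$ is positive semi-definite with kernel equal to the constants, the operator $NI - \Delta_h = N(I - \tfrac{1}{N}\Delta_h)$ is strictly positive definite, hence invertible, and the unique solution $\eta = (I - \tfrac{1}{N}\Delta_h)^{-1}\bigl(\tfrac{1}{N}\sum_j \xi_j'\bigr)$ is exactly the correction given in the statement.

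Two well-posedness points remain. First, solvability of \eqref{eq:def_xi_prime}: the discrete Neumann Laplacian is singular on constants, so we need the right-hand side to have zero total sum. This holds because $\div_h = -\nabla_h^*$ annihilates constants by duality, while $\nu_i$ and $\nu$ both lie in the unit simplex and hence have equal mass. Second, $\xi_i'$ is only determined up to an additive constant, so one must verify that the outputs $\widetilde{\boldsymbol{\sigma}}_i$ and $\widetilde{\nu}$ do not depend on this choice: a shift $\xi_i' \mapsto \xi_i' + c_i$ changes $\eta$ by $\tfrac{1}{N}\sum_j c_j$ (constants being fixed points of $(I - \tfrac{1}{N}\Delta_h)^{-1}$), so $\xi_i$ changes by $c_i - \tfrac{1}{N}\sum_j c_j$; this shift is killed by $\nabla_h$ in the definition of $\widetilde{\boldsymbol{\sigma}}_i$ and telescopes to $0$ when summed in $\widetilde{\nu}$, so both quantities are well-defined.

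The main conceptual point is spotting the decoupling ansatz; once one observes that the off-diagonal blocks of $AA^*$ couple the unknowns only through the single scalar $\sum_j \xi_j$, the reduction to a scalar screened Poisson equation for $\eta$ is essentially forced, and the remainder is direct verification.
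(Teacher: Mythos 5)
Your proof is correct and follows essentially the same route as the paper's: both reduce to verifying that $\boldsymbol{\xi}=(\xi_1,\dots,\xi_N)$ solves $AA^*\boldsymbol{\xi}=A(\boldsymbol{\sigma},\nu)-b$, with your ``decoupling ansatz'' $\xi_i=\xi_i'-\eta$ leading to the same screened Poisson equation $(I-\tfrac{1}{N}\Delta_h)\eta=\tfrac{1}{N}\sum_j\xi_j'$ that the paper verifies by direct substitution. A welcome addition is your explicit check that the additive-constant ambiguity in $\xi_i'$ does not affect $\widetilde{\boldsymbol{\sigma}}_i$ or $\widetilde{\nu}$, a well-posedness point the paper leaves implicit in the phrase ``any solution to'' \eqref{eq:def_xi_prime}.
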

	\begin{proof}
		First, let $i = 1, \dots, N$, let $\mathbf{1}\in \mathcal{M}_h$ be constantly equal to $1$, and note that, by definition of the scalar products, since $\nu_i, \ \nu \in \Delta$ and $\ker \nabla_h = \spn\{\mathbf{1}\}$, we get
		\begin{equation*}
		\langle \div_h\bm{\sigma}_i+\nu_i-\nu, \mathbf{1}\rangle_{\mathcal{M}_h} = \langle \div_h\bm{\sigma}_i, \mathbf{1}\rangle_{\mathcal{M}_h} =  -\langle \bm{\sigma}_i, \nabla_h \mathbf{1}\rangle_{\bm{\mathcal{S}}_h} = 0.
		\end{equation*}
		Hence, $\div_h\bm{\sigma}_i+\nu_i-\nu\in (\ker \nabla_h)^{\perp} =(\ker \Delta_h)^{\perp} = \Img \Delta_h$ for all $i = 1, \dots, N$, and, thus, \eqref{eq:def_xi_prime} actually admits a solution. From optimality conditions, we only need to show that $A(\widetilde{\boldsymbol{\sigma}}, \widetilde{\nu}) = b$ and that $\boldsymbol{\xi}:=(\xi_1, \dots, \xi_N)$ solves $AA^*\boldsymbol{\xi}=A(\boldsymbol{\sigma}, \nu)-b$ where $A$ and $AA^*$ are defined in \eqref{eq:numerics_A_and_AA*}. Let us start with the latter. Denoting $\bar{\xi}':=\tfrac{1}{N}\sum_{i=1}^N\xi_i'$, we have for all $i = 1, \dots, N$ that
		\begin{align*}
			-\Delta_h \xi_i+\xi_1+\cdots+\xi_N &= -\Delta_h \xi_i'+\Delta_h\left(I-\tfrac{1}{N}\Delta_h\right)^{-1}\bar{\xi}'+N\bar{\xi}'-N\left(I-\tfrac{1}{N}\Delta_h\right)^{-1}\bar{\xi}'\\
			& =\div_h\boldsymbol{\sigma}_i+\nu_i-\nu+N\bar{\xi}'-N\left(I-\tfrac{1}{N}\Delta_h\right)\left(I-\tfrac{1}{N}\Delta_h\right)^{-1}\bar{\xi}'\\
			& = \div_h \boldsymbol{\sigma}_i+\nu_i-\nu.
		\end{align*}
		Hence $AA^*\boldsymbol{\xi}=A(\boldsymbol{\sigma}, \nu)-b$. Regarding $A(\widetilde{\boldsymbol{\sigma}}, \widetilde{\nu}) = b$, we have
		\begin{align*}
			\div_h\widetilde{\boldsymbol{\sigma}}_i +\nu_i &= \div_h \boldsymbol{\sigma}_i+\Delta_h\xi_i +\nu_i = \div_h \boldsymbol{\sigma}_i+\Delta_h \xi_i'-\Delta_h\left(I-\tfrac{1}{N}\Delta_h\right)^{-1}\bar{\xi}'+\nu_i\\
			& = \nu -\Delta_h\left(I-\tfrac{1}{N}\Delta_h\right)^{-1}\bar{\xi}'=\nu+N\bar{\xi}'-N\left(I-\tfrac{1}{N}\Delta_h\right)^{-1}\bar{\xi}'=\widetilde{\nu},
		\end{align*}
		which concludes the proof. 
	\end{proof}
	 Proposition \ref{prop:projection_discrete} allows us to implement a Douglas--Rachford scheme on \eqref{eq:beckman_discrete_numerics}, which we summarize in Algorithm \ref{alg:drs}.
	 \begin{algorithm}[b!]
	 	\caption{Douglas--Rachford for the Wasserstein median problem}
	 	\label{alg:drs}
	 	\KwData{A collection of discrete probability measures $\nu_1, \dots, \nu_N\in \mathcal{M}_h$, a step-size $\tau >0$ and relaxation parameters $(\theta_k)_{k \in \mathbb{N}}$ in $[0,2)$ such that $\sum_{k=0}^\infty\theta_k(2-\theta_k)=+\infty$}
	 	\KwResult{$\nu^* = \lim_{k\to +\infty} \nu^k, \ \bm{\sigma}^*_q = \lim_{k\to +\infty} \bm{\sigma}_q^k$ for $q = 1, \dots, N$ solution to \eqref{eq:beckman_discrete_numerics}}
	 	\textbf{Initialize:} $\boldsymbol{\eta}_1^0, \dots, \boldsymbol{\eta}_N^0 \in \bm{\mathcal{S}}_h$ and $\mu^0 \in \mathcal{M}_h \cap \Delta$\\
	 	
	 	\While{not convergent}{
	 		$\boldsymbol{\sigma}_q^{k+1} = \Shrink_{\tau}\left(\boldsymbol{\eta}_q^{k}\right)$ for all $q =1, \dots, N$\\
	 		$\nu^{k+1} = P_{\Delta}(\mu^k)$\\
	 		\For{$q = 1, \dots, N$}{
	 		\textbf{Solve:} $\displaystyle - \Delta_h \xi_q' = \div_h (2\boldsymbol{\sigma}^{k+1}_q-\boldsymbol{\eta}^{k}_q)+\nu_q-2\nu^{k+1}+\mu^k$\\
	 		$\xi_q =  \xi_q'-\left(I-\frac{1}{N}\Delta_h\right)^{-1}\left(\frac{1}{N}\sum_{j=1}^{N}\xi_j'\right)$
	 		}
 			$\boldsymbol{\eta}_q^{k+1} = (1-\theta_k)\boldsymbol{\eta}_q^{k}+\theta_k\left(\boldsymbol{\sigma}^{k+1}_q+\nabla_h \xi_q\right)$ for all $q=1, \dots, N$\\
 			$\mu^{k+1}=(1-\theta_k)\mu^k+\theta_k\left(\nu^{k+1}+\xi_1+\cdots+\xi_N\right)$
	 		
	 	}
	 \end{algorithm}
 	Note that, in Algorithm \ref{alg:drs}, we are required to solve two \emph{sparse} (elliptic) linear systems, which we tackle with generic sparse linear solvers provided by standard Python libraries. However, one should put adequate care when trying to solve the first Laplacian system. Indeed, if the projection onto the simplex is not computed sufficiently well, the right-hand side can lie out of the range of the Laplacian. For this reason, in our numerical implementation, we smoothed out all possible numerical errors with a further projection of the right-hand side onto the set of discrete measures with a total mass equal to one.
 	
 	The computational cost required to solve the aforementioned linear systems is overall balanced with a very fast iteration-wise convergence behaviour. Remarkably, there is no need to store dense $n\times n$ matrices. This makes the proposed method suitable for highly large-scale instances, see e.g.~Figures \ref{fig:intro2} and \ref{fig:beckmann}.
 	 
 	\paragraph{Convergence.} The Douglas--Rachford splitting method benefits from robust convergence guarantees, without any condition neither on the starting point nor on the step-size $\tau>0$ \cite{drs_mercier_lions, bredies2021degenerate}. In particular, we have that if $(\boldsymbol{\sigma}_q^k)_{k\in \mathbb{N}}, \ (\nu^k)_{k\in \mathbb{N}}, \ (\boldsymbol{\eta}^k)_{k\in \mathbb{N}}$ and $(\mu^k)_{k \in \mathbb{N}}$ are the sequences generated by Algorithm \ref{alg:drs}, then for each $q= 1, \dots, N$, we have $\boldsymbol{\sigma}_q^k \to \boldsymbol{\sigma}_q^*$ and $\nu^k \to \nu^*$ and $(\boldsymbol{\sigma}_1^*, \dots, \boldsymbol{\sigma}_N^*, \nu^*)$ solves \eqref{eq:beckman_discrete_numerics}. As a stopping criterion, we measure the residual $r^k:= \sum_{q=1}^N\|\boldsymbol{\eta}_q^{k+1}-\boldsymbol{\eta}_q^{k}\|_{\bm{\mathcal{S}}_h}^2+\|\mu^{k+1}-\mu^k\|^2_{\mathcal{M}_h}$, which is guaranteed to converge to zero with a $o(k^{-1})$ worst-case rate, and we stop the iterations as soon as the residual drops below a prescribed tolerance. 
 	
 	\paragraph{Comments.} Note that to solve \eqref{eq:beckman_discrete_numerics}, we also implemented the Primal Dual Hybrid Gradient method by Chambolle and Pock, with different step-size selection strategies, such as \emph{backtracking} and \emph{adaptive} schemes \cite{NIPS2015_cd758e8f}, and several different \emph{fixed} step-sizes choices, which, however, always provided very slow behaviours, and therefore, we chose not to discuss it further. Note that for OT-like problems, the Douglas--Rachford splitting method has been employed first in  its dual formulation (ADMM) in  \cite{ppo}, then in \cite{Benamou2015}  and more recently in \cite{bcn2022}. Its extension to the Wasserstein median case proposed in the present paper has been surprisingly overlooked.
 	
    \paragraph{Acknowledgments:} E.C. would like to thank Stefano Gualandi for the helpful discussions during the preparation of this work. G.C. acknowledges the support of the Lagrange Mathematics and Computing Research Center. E.C. has received funding from the European Union’s Framework Programme for Research and Innovation Horizon 2020 (2014-2020) under the Marie Skłodowska-Curie Grant Agreement No. 861137. The Department of Mathematics and Scientific Computing, to which E.C. is affiliated, is a member of NAWI Graz (\href{https://www.nawigraz.at/en}{https://www.nawigraz.at/}). K.E. acknowledges that this project has received funding from the European Union’s Horizon 2020 research and innovation programme under the Marie Skłodowska-Curie grant agreement No. 754362. \includegraphics[width=0.5cm]{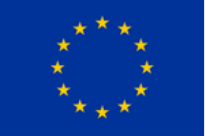}
    K.E. acknowledges funding from the ANR project ANR-20-CE40-0014.

\section*{Appendix}

\paragraph{Proof of \eqref{variationmpmrefined}.} 
Of course, if all the $x_i$'s are equal $I_\pm(\bx)=\{1, \dots, N\}$ and \eqref{variationmpmrefined} is nothing else than \eqref{variationmpm}. We may therefore assume that
\[\Delta :=\min \{ \vert x_i-x_j \vert \; : \; x_i \neq x_j \} >0.\]
Then, setting
\[\delta_i^+:=\max_k \{ y_k-x_k \; : \; x_k=x_i\}, \; \delta_i^-:=\min_k \{ y_k-x_k \; : \; x_k=x_i\},\]
and $\bdel^\pm:=(\delta_1^\pm, \dots , \delta_N^\pm)$ we have $\bx + \bdel^+ \ge \by \ge \bx+ \bdel^-$ and then by monotonicity 
\[ \Ml^\pm(\bx + \bdel^+)  \ge \Ml^{\pm}(\by)  \ge    \Ml^\pm(\bx+ \bdel^-),\]  
but if we choose $\by$ close enough to $\bx$, namely such that
\[ \max_{i, j} \vert \delta^{\pm}_i - \delta^{\pm}_j \vert \leq\frac{\Delta}{2}\]
this, together with the definition of $\Delta$, implies that the components of $\bx$ and $\bx+ \bdel^\pm$ are ordered in the same way, i.e.~$x_j < x_i$ if and only if $x_j + \delta_j^\pm < x_i  + \delta_i^\pm$. Thus, for $i\in I_+(x)$, $x_i =\Ml^+(\bx)$ and
\[ \sum_{j\; : \; x_j  +  \delta_j^- < x_i  + \delta_i^{- } } \lambda_j  =  \sum_{j\; : \; x_j  < x_i } \lambda_j \leq \frac{1}{2} , \]
so that
\[ \Ml^+(\by) \ge \Ml^+(\bx+ \bdel^-) \ge x_i + \bdel^{-}_i  \geq \Ml^+(\bx) + \min_{k\in I_+(\bx)}  (y_k-x_k).\]
In a similar way for $i\in I_-(x)$, $x_i =\Ml^-(\bx)$ and
\[ \sum_{j\; : \; x_j  +  \delta_j^- \leq  x_i  + \delta_i^{- } } \lambda_j  =  \sum_{j\; : \; x_j  \leq  x_i } \lambda_j \geq \frac{1}{2},  \]
so that
\[ \Ml^-(\by) \ge \Ml^-(\bx+ \bdel^-) \ge x_i + \bdel^{-}_i  \geq \Ml^-(\bx) + \min_{k\in I_-(\bx)}  (y_k-x_k).\]
This proves the rightmost inequalites in \eqref{variationmpmrefined}. The proof of the  leftmost inequalities in \eqref{variationmpmrefined} is similar and thus omitted.

\bibliographystyle{plain}
\bibliography{bibli}

\end{document}